\newcommand*{\@old@slash}{}\let\@old@slash\slash
\def\slash{\relax\ifmmode\delimiter"502F30E\mathopen{}\else\@old@slash\fi}
\titleformat{\section}{\normalsize\bfseries}{\thesection}{1em}{}
\titleformat{\subsection}{\normalsize\bfseries}{\thesubsection}{1em}{}
\numberwithin{equation}{subsection}
\theoremstyle{plain}
\newtheorem{PropSub}[subsection]{Proposition}
\newtheorem{LemSub}[subsection]{Lemma}
\newtheorem{CorSub}[subsection]{Corollary}
\newtheorem{ThmSub}[subsection]{Theorem}
\theoremstyle{definition}
\newtheorem{DefSub}[subsection]{Definition}
\newtheorem{ExaSub}[subsection]{Example}
\newtheorem{RemSub}[subsection]{Remark}
\newtheorem{ParSub}[subsection]{}
\newtheorem*{Acknowledgement}{Acknowledgement}
\newcommand*{\emptybox}{\leavevmode\hbox{}}
\DeclareMathAlphabet{\mathpzc}{OT1}{pzc}{m}{it}
\DeclareMathAlphabet{\mathcalligra}{T1}{calligra}{m}{n}
\newcommand{\bref}[1]{\textnormal{\ref{#1}}}
\newcommand{\pbref}[1]{\textnormal{(\ref{#1})}}
\newcommand{\A}{\ensuremath{\mathscr{A}}}
\newcommand{\B}{\ensuremath{\mathscr{B}}}
\newcommand{\C}{\ensuremath{\mathscr{C}}}
\newcommand{\D}{\ensuremath{\mathscr{D}}}
\newcommand{\normalJ}{\ensuremath{\mathscr{J}}}
\newcommand{\J}{\ensuremath{{\kern -0.4ex \mathscr{J}}}}
\newcommand{\JF}{\ensuremath{{\kern -0.4ex \mathscr{J}_{\kern -0.05ex F}}}}
\newcommand{\sP}{\ensuremath{\mathscr{P}}}
\newcommand{\Q}{\ensuremath{\mathscr{Q}}}
\newcommand{\R}{\ensuremath{\mathscr{R}}}
\newcommand{\sS}{\ensuremath{\mathscr{S}}}
\newcommand{\T}{\ensuremath{\mathscr{T}}}
\newcommand{\U}{\ensuremath{\mathscr{U}}}
\newcommand{\V}{\ensuremath{\mathscr{V}}}
\newcommand{\X}{\ensuremath{\mathscr{X}}}
\newcommand{\uV}{\ensuremath{\mkern2mu\underline{\mkern-2mu\mathscr{V}\mkern-6mu}\mkern5mu}}
\newcommand{\CAT}{\ensuremath{\operatorname{\textnormal{\text{CAT}}}}}
\newcommand{\VCAT}{\ensuremath{\V\textnormal{-\text{CAT}}}}
\newcommand{\NN}{\ensuremath{\mathbb{N}}}
\newcommand{\RR}{\ensuremath{\mathbb{R}}}
\newcommand{\SSS}{\ensuremath{\mathbb{S}}}
\newcommand{\TT}{\ensuremath{\mathbb{T}}}
\newcommand{\UU}{\ensuremath{\mathbb{U}}}
\newcommand{\ZZ}{\ensuremath{\mathbb{Z}}}
\newcommand{\ob}{\ensuremath{\operatorname{\textnormal{\textsf{ob}}}}}
\newcommand{\mor}{\ensuremath{\operatorname{\textnormal{\textsf{mor}}}}}
\newcommand{\Lan}{\ensuremath{\operatorname{\textnormal{\textsf{Lan}}}}}
\newcommand{\End}{\ensuremath{\operatorname{\textnormal{End}}}}
\newcommand{\kk}[1]{\textnormal{\textsf{k}}_{\scriptscriptstyle #1}}
\newcommand{\kt}[1]{\widetilde{\textnormal{\textsf{k}}}_{\scriptscriptstyle #1}}
\newcommand{\stt}{\mathbin{\tilde{*}}}
\newcommand{\ca}[1]{\scalebox{0.85}{\raisebox{0.3ex}{$|$}} #1\scalebox{0.85}{\raisebox{0.3ex}{$|$}}}
\newcommand{\Th}{\ensuremath{\textnormal{Th}}}
\newcommand{\ThJ}{\ensuremath{\Th_{\kern -0.5ex \normalJ}}}
\newcommand{\SubThJ}{\ensuremath{\textnormal{SubTh}_{\kern -0.5ex \normalJ}}}
\newcommand{\Vfp}{\ensuremath{\V_{\kern -0.5ex fp}}}
\newcommand{\Ev}{\ensuremath{\textnormal{\textsf{Ev}}}}
\newcommand{\Coev}{\ensuremath{\textnormal{\textsf{Coev}}}}
\newcommand{\inJ}[1]{#1 \in \normalJ\kern -1.2ex}
\newcommand{\VCATJ}{\VCAT_{\kern -0.7ex \normalJ}}
\newcommand{\PhiJ}{\Phi_{\kern -0.8ex \normalJ}}
\newcommand{\MndJ}{\Mnd_{\kern -0.8ex \normalJ}}
\newcommand{\Set}{\ensuremath{\operatorname{\textnormal{\text{Set}}}}}
\newcommand{\FinCard}{\ensuremath{\operatorname{\textnormal{\text{FinCard}}}}}
\newcommand{\Ab}{\ensuremath{\operatorname{\textnormal{\text{Ab}}}}}
\newcommand{\Mod}[1]{\ensuremath{#1\textnormal{-\text{Mod}}}}
\newcommand{\Mat}{\ensuremath{\textnormal{\text{Mat}}}}
\newcommand{\Mnd}{\ensuremath{\operatorname{\textnormal{\text{Mnd}}}}}
\newcommand{\Alg}[1]{\ensuremath{#1\kern -.5ex\operatorname{\textnormal{-\text{Alg}}}}}
\newcommand{\CAlg}[1]{\ensuremath{#1\kern -.5ex\operatorname{\textnormal{-\text{CAlg}}}}}
\newcommand{\CRProf}{\ensuremath{\operatorname{\textnormal{\text{CRProf}}}}}
\newcommand{\CRProfJ}{\ensuremath{\CRProf_{\kern -0.6ex \normalJ}}}
\newcommand{\Adjn}[6]{\xymatrix {#1 \ar@/_0.5pc/[rr]_{#2}^(0.4){#4}^(0.6){#5}^{\top} & & #6 \ar@/_0.5pc/[ll]_{#3}}}
\newcommand{\Equiv}[6]{\xymatrix {#1 \ar@/_0.5pc/[rr]_{#2}^(0.4){#4}^(0.6){#5}^{\sim} & & #6 \ar@/_0.5pc/[ll]_{#3}}}
\newcommand{\lt}{\leqslant}
\newcommand{\op}{\ensuremath{\textnormal{op}}}
\newcommand{\aff}{\ensuremath{\textnormal{aff}}}
\newcommand{\pushoutcorner}{\ar@{}[dr]|(.3)\ulcorner}
\newcommand{\pullbackcorner}{\ar@{}[dr]|(.3)\lrcorner}
\newcommand{\cmt}[1]{}
\begin{document}

\author{\normalsize  Rory B. B. Lucyshyn-Wright\thanks{The author gratefully acknowledges financial support in the form of an AARMS Postdoctoral Fellowship, a Mount Allison University  Research Stipend, and, earlier, an NSERC Postdoctoral Fellowship.}\let\thefootnote\relax\footnote{Keywords: commutant; commutation; commutative monad; algebraic theory; commutative algebraic theory; universal algebra; monad; enriched category theory}\footnote{2010 Mathematics Subject Classification: 18C10, 18C15, 18C20, 18C05, 18D20, 18D15, 08A99, 08B99, 08C05, 08C99, 03C05, 18D35, 18D10, 18D25, 18A35}
\\
\small Mount Allison University, Sackville, New Brunswick, Canada}

\title{\large \textbf{Commutants for enriched algebraic theories and monads}}

\date{}

\maketitle

\abstract{We define and study a notion of \textit{commutant} for $\V$-enriched $\J$-algebraic theories for a system of arities $\J$, recovering the usual notion of commutant or centralizer of a subring as a special case alongside Wraith's notion of commutant for Lawvere theories as well as a notion of commutant for $\V$-monads on a symmetric monoidal closed category $\V$.  This entails a thorough study of commutation and Kronecker products of operations in $\J$-theories.  In view of the equivalence between $\J$-theories and $\J$-ary monads we reconcile this notion of commutation with Kock's notion of commutation of cospans of monads and, in particular, the notion of commutative monad.  We obtain notions of $\J$-\textit{ary commutant} and \textit{absolute commutant} for $\J$-ary monads, and we show that for finitary monads on $\Set$ the resulting notions of finitary commutant and absolute commutant coincide.  We examine the relation of the notion of commutant to both the notion of \textit{codensity monad} and the notion of \textit{algebraic structure} in the sense of Lawvere.}

\section{Introduction} \label{sec:intro}

Given a pair of endomorphisms $\mu,\nu:S \rightarrow S$ in a category $\T$ we can ask whether $\mu$ and $\nu$ \textit{commute}, i.e. whether $\mu \cdot \nu = \nu \cdot \mu$.  Interestingly, this notion of commutation generalizes to apply to pairs of morphisms $\mu:S^J \rightarrow S^{J'}$ and $\nu:S^K \rightarrow S^{K'}$ between various \textit{powers} of a given object $S$ in a category $\T$, where $J,J',K,K'$ are sets.  Indeed, extrapolating from Linton's classic work \cite{Lin:AutEqCats}, the pair $\mu,\nu$ determines associated morphisms $\mu * \nu$, $\mu \stt \nu$ $:$ $S^{J \times K} \rightarrow S^{J' \times K'}$ that we call the \textit{first and second Kronecker products} of $\mu$ and $\nu$, and we say that $\mu$ and $\nu$ \textit{commute} if $\mu * \nu = \mu \stt \nu$ \pbref{def:commutation}.  The importance of this notion of commutation stems from the fact that mappings $S^J \rightarrow S$ defined on a power of a given set $S$ are fundamental to Birkhoff's \textit{universal algebra} \cite[II]{Bir:SelPa}, where they are called \textit{($J$-ary) operations}.  Classically, one restricts attention to operations whose \textit{arities} $J$ are finite cardinals.  It is an insight of Lawvere \cite{Law:PhD} that any variety of algebras in Birkhoff's sense is described by an abstract category $\T$, called an \textit{algebraic theory} or \textit{Lawvere theory}, whose objects are the finite powers $S^0,S^1,S^2,...$ of a single object $S = S^1$.  Individual algebras of the given variety are then described equivalently as $\T$-\textit{algebras}, i.e. functors $A:\T \rightarrow \Set$ that preserve finite powers.  For convenience one often takes the objects of $\T$ to be the finite cardinals $0,1,2,...\,$.  For example, left $R$-modules for a ring $R$ can be described as $\T$-algebras where $\T$ is a category whose morphisms are $R$-matrices, wherein the first Kronecker product $\mu * \nu$ of a pair of morphisms is the classical \textit{Kronecker product} $\nu \otimes \mu$ of the matrices $\nu$ and $\mu$ \cite[4.4]{Lu:CvxAffCmt}.

Given a subtheory $\T \hookrightarrow \U$ of a Lawvere theory $\U$, one can define the \textit{commutant} of $\T$ in $\U$ as the subtheory $\T^\perp \hookrightarrow \U$ consisting of all morphisms $\mu \in \mor\U$ such that $\mu$ commutes with every $\nu \in \mor\T$.  This notion of commutant was introduced briefly by Wraith \cite{Wra:AlgTh} and is studied further in the author's recent paper \cite{Lu:CvxAffCmt} with attention to specific examples of theories that arise as commutants.

In the present paper we study a generalization of this notion of commutant in the context of $\V$-enriched $\J$-{algebraic theories} for a system of arities $\J$ in the sense of \cite{Lu:EnrAlgTh}, obtaining notions of commutant for $\V$-\textit{monads} on $\V$ as special cases.  This entails a detailed study of several fundamental aspects of the theory of $\V$-enriched universal algebra for a system of arities $\J$, including commutation and Kronecker products of operations.

By definition, a \textit{system of arities} $\J \rightarrow \V$ in a symmetric monoidal closed category $\V$ is a fully faithful symmetric strong monoidal $\V$-functor.  Up to an equivalence, a system of arities is therefore simply a full sub-$\V$-category $\J \hookrightarrow \V$ closed under $\otimes$ and containing the unit object $I$ of $\V$ \cite[3.8, 3.9]{Lu:EnrAlgTh}.  A $\J$-\textit{theory} \cite{Lu:EnrAlgTh} is then defined as a $\V$-category $\T$ whose objects are cotensors $S^J$ of a fixed object $S = S^I$, where $J \in \ob\J \subseteq \ob\V$, the notion of \textit{cotensor} $S^J$ here providing the appropriate concept of `$\V$-enriched $J$-th power' of $S$, written herein as $[J,S]$.  Without loss of generality, we require not only that the objects $[J,S]$ of $\T$ be in bijective correspondence with the objects $J$ of $\J$ but moreover that concretely $\ob\T = \ob\J$.

By considering specific systems of arities $\J \hookrightarrow \V$ one recovers various existing notions as instances of the notion of $\J$-theory, as summarized in the following table; see \cite[\S 3, \S 4.2]{Lu:EnrAlgTh} for details.
\begin{center}
     \begin{tabular}{ | p{39ex} | p{39ex} |}
     \hline
     System of arities $\J \hookrightarrow \V$  & $\J$-theories \\ \hhline{|=|=|}
     $\FinCard \hookrightarrow \Set$, \newline the finite cardinals & Lawvere theories\\ \hline
     $\Vfp \hookrightarrow \V$, \newline the finitely presentable objects, where $\V$ is l.f.p. as a closed category & Power's \textit{enriched Lawvere theories} \cite{Pow:EnrLaw} \\ \hline
     $\J = \V$ & Dubuc's $\V$-\textit{theories} \cite{Dub}; equivalently, arbitrary $\V$-monads on $\V$\\\hline
     $\J = \{I\} \hookrightarrow \V$ & monoids in $\V$ \newline (e.g., rings when $\V = \Ab$)\\ \hline
     all finite copowers of $I$ & the enriched algebraic theories of Borceux and Day \cite{BoDay}\\\hline
     \end{tabular}
\end{center}
Given a $\J$-theory $\T$ and a $\V$-category $\C$, a \textit{$\T$-algebra} in $\C$ is by definition a $\V$-functor $A:\T \rightarrow \C$ that preserves cotensors by objects $J$ of $\J$.  Most often we take $\C = \V$.  We call $\V$-natural transformations between $\T$-algebras \textit{$\T$-homomorphisms}.  Therefore $\T$-algebras in $\C$ form a full sub-$\V$-category $\Alg{\T}_\C \hookrightarrow [\T,\C]$ of the $\V$-category of $\V$-functors from $\T$ to $\C$, provided that the latter $\V$-category exists. But $\Alg{\T}_\C$ may exist even when $[\T,\C]$ does not, and we show herein that $\Alg{\T}_\C$ always exists as soon as $\V$ has equalizers and intersections of $(\ob\J)$-indexed families of strong monomorphisms \pbref{thm:existence_of_vcat_talgs}.

Given a $\J$-theory $\T$, we define for each 4-tuple of objects $J,J',K,K' \in \ob\J = \ob\T$ a pair of morphisms
$$\kk{JJ'KK'},\;\kt{JJ'KK'}\;\;:\;\;\T(J,J')\otimes\T(K,K') \rightarrow \T(J\otimes K,J'\otimes K')$$
which, in the classical case $\J = \FinCard \hookrightarrow \Set = \V$, furnish the first and second Kronecker products of pairs of morphisms in $\T$.  In the general case, we can instead work with pairs of \textit{generalized elements} $\mu:V \rightarrow \T(J,J')$ and $\nu:W \rightarrow \T(K,K')$ of the hom-objects for $\T$, where $V,W \in \ob\V$, and for any such pair we again obtain first and second \textit{Kronecker products} $\mu * \nu, \mu \stt \nu:V \otimes W \rightarrow \T(J \otimes K,J' \otimes K')$.   We say that $\mu$ \textit{commutes with} $\nu$ if the first and second Kronecker products of $\mu$ and $\nu$ are equal, and we say that $\T$ is \textit{commutative} if every such pair $(\mu,\nu)$ commutes, equivalently, if the first and second Kronecker products in $\T$ are equal.

This relation of commutation of generalized elements is symmetric \pbref{thm:cmtn_symm}, and it induces a notion of commutation of \textit{cospans of $\J$-theories}, as follows.  Given $\J$-theories $\T$ and $\U$, a \textit{morphism of $\J$-theories} is an identity-on-objects $\V$-functor $A:\T \rightarrow \U$ satisfying a certain condition \pbref{def:morph_th}.  Given a pair of morphisms of $\J$-theories $A:\T \rightarrow \U$ and $B:\sS \rightarrow \U$, we say that $A$ \textit{commutes with} $B$ if the components $A_{JJ'}:\T(J,J') \rightarrow \U(J,J')$ and $B_{KK'}:\sS(K,K') \rightarrow \U(K,K')$ commute for all $J,J',K,K' \in \ob\J$.  We prove that one can fix $J' = I = K'$ and still obtain an equivalent condition \pbref{thm:comm_mor_th_via_single_outp_ops}.

In order to define a notion of commutant in this general setting, we exploit a connection between commutation and the notion of $\T$-homomorphism \pbref{thm:cmtn_via_thoms}.  Any morphism of $\J$-theories $A:\T \rightarrow \U$ is, in particular, a $\T$-algebra in $\U$ and so can be considered as an object of the $\V$-category $\Alg{\T}_\U$, provided that this $\V$-category exists.  If it does, then for each object $J$ of $\J$ there is a (pointwise) cotensor $[J,A]$ of $A$ by $J$ in $\Alg{\T}_\U$, and we define the \textit{commutant} of $A$ (or of $\T$ with respect to $A$) as the $\J$-theory $\T^\perp_A$ whose hom-objects are the \textit{objects of $\T$-homomorphisms}
$$\T^\perp_A(J,K) = \Alg{\T}_\U([J,A],[K,A]) = [\T,\U]([J,A],[K,A])\;\;\;\;\;\;\;\;(J,K \in \ob\J)$$
with composition and identities as in $\Alg{\T}_\U$.  Hence, as a corollary to our existence result for categories of $\T$-algebras \pbref{thm:existence_of_vcat_talgs}, the commutant $\T^\perp_A$ always exists as soon as $\V$ has equalizers and intersections of $(\ob\J)$-indexed families of strong monomorphisms \pbref{thm:existence_of_commutant_via_intersections}.

The commutant of a morphism of $\J$-theories $A:\T \rightarrow \U$ is a subtheory $\T^\perp_A$ of $\U$ \pbref{thm:cmtnt_is_subth}, and we show that it has a universal property, namely that a morphism of $\J$-theories $B:\sS \rightarrow \U$ commutes with $A$ if and only if $B$ factors through the commutant $\T^\perp_A \hookrightarrow \U$ \pbref{thm:commutants_via_commutativity}.  Letting $\ThJ$ denote the category of $\J$-theories, and calling objects of the slice category $\ThJ \slash \U$ \textit{theories over $\U$}, we show that the assignment to each theory $\T$ over $\U$ its commutant $\T^\perp$ extends to a functor $(-)^\perp:(\ThJ \slash \U)^\op \rightarrow \ThJ \slash \U$ that is right-adjoint to its formal dual \pbref{thm:adjn}.  The resulting adjunction restricts to a Galois connection on subtheories of $\U$ \pbref{thm:galois_connection}.

If a given object $C$ of a $\V$-category $\C$ is equipped with cotensors $[J,C]$ by each object $J$ of $\J \hookrightarrow \V$, then we can form an associated $\J$-theory $\C_C$ called the \textit{full $\J$-theory of $C$ in $\C$} with hom-objects
$$\C_C(J,K) = \C([J,C],[K,C])\;\;\;\;\;\;\;\;(J,K \in \ob\J)$$
and with composition and identities as in $\C$.  In particular, the commutant $\T^\perp_A$ of a morphism of $\J$-theories $A:\T \rightarrow \U$ is the full $\J$-theory
$$\T^\perp_A = (\Alg{\T}_\U)_A$$
of $A$ in $\Alg{\T}_\U$.

This leads to a notion of commutant of an arbitrary $\T$-\textit{algebra}, as follows.  Indeed, since any $\T$-algebra $A:\T \rightarrow \C$ equips its \textit{carrier} $\ca{A} := A(I)$ with cotensors $[J,\ca{A}] = A(J)$ by each object $J$ of $\J$, we can form the full $\J$-theory $\C_{|A|}$ of $\ca{A}$ in $\C$, and then $A$ can be viewed equally as a morphism of $\J$-theories
$$A:\T \rightarrow \C_{|A|}\;.$$
The \textit{commutant} of the $\T$-algebra $A$ is defined as the commutant $\T^\perp_A \hookrightarrow \C_{|A|}$ of this induced morphism.  Consequently, the commutant of $A$ is equivalently characterized as the full $\J$-theory
$$\T^\perp_A = (\Alg{\T}_\C)_A$$
of $A$ in the $\V$-category of $\T$-algebras in $\C$, provided that the latter $\V$-category exists.

Throughout this paper, we refer the reader to numerous examples of commutants for classical Lawvere theories that are developed in detail in the author's recent paper \cite{Lu:CvxAffCmt}.  The present setting of $\V$-enriched $\J$-theories also admits the classical notion of \textit{centralizer} for rings as a source of basic examples, when one takes $\V$ to be the category $\Ab$ of a abelian groups with $\J = \{\ZZ\} \hookrightarrow \Ab$.  For example, given a ring $T$ with corresponding $\{\ZZ\}$-theory $\T$, a $\T$-algebra $M$ is precisely a left $T$-module, and the commutant $\T^\perp_M \hookrightarrow \Ab_M$ of $M$ in the above sense is the inclusion of endomorphism rings $\End_T(M) \hookrightarrow \End_\ZZ(M)$ \pbref{exa:cmtnt_rmod_over_ab}.

Given a system of arities $j:\J \hookrightarrow \V$, we say that a $\V$-monad $\TT = (T,\eta,\mu)$ on $\V$ is a \textit{$\J$-ary monad} if $T$ preserves ($\V$-enriched) left Kan extensions along $j$ \cite[\S 11]{Lu:EnrAlgTh}.  The $\J$-ary monads form a full subcategory $\MndJ(\V) \hookrightarrow \Mnd_{\VCAT}(\V)$ of the category of all $\V$-monads on $\V$, and it is proved in \cite[11.8]{Lu:EnrAlgTh} that there is an equivalence
\begin{equation}\label{eq:equiv_jth_jmnd_intro}\ThJ \;\;\;\;\simeq\;\;\;\; \MndJ(\V)\end{equation}
between the category of $\J$-theories and the category of $\J$-ary monads, provided that the system of arities $\J$ is \textit{eleutheric} (\cite[\S 7]{Lu:EnrAlgTh}, see \bref{par:equiv_jth_jary_mnds} below).  By \cite[7.5]{Lu:EnrAlgTh}, all the systems of arities listed in the above table are always eleutheric save for the last, which is eleutheric for a wide class of categories $\V$ \cite[7.5 \#5]{Lu:EnrAlgTh}.  In particular, by taking $\J = \V$ one obtains an equivalence between $\V$-theories and arbitrary $\V$-monads on an arbitrary symmetric monoidal closed category $\V$.

Whereas Kock defined a notion of commutation of cospans of arbitrary $\V$-monads on $\V$ \cite[4.1]{Kock:DblDln}, we show that the above notion of commutation for cospans of $\J$-theories accords with Kock's notion of commutation, in that a cospan of $\J$-theories commutes if and only if the corresponding cospan of $\J$-ary monads commutes in Kock's sense \pbref{thm:cmmtn_jth_jmnd}.  In particular, a $\J$-theory $\T$ is commutative if and only if its corresponding $\J$-ary monad is commutative \pbref{thm:jary_mnd_comm_iff_jth_comm} in the sense defined by Kock \cite{Kock:Comm}.

Via the equivalence \eqref{eq:equiv_jth_jmnd_intro}, the notion of commutant for $\J$-theories induces a corresponding notion of commutant for $\J$-ary monads \pbref{def:jary_and_abs_cmtnt_for_mnds}.  Indeed, given a morphism of $\J$-ary monads $\alpha:\TT \rightarrow \UU$ we can thus define its \textit{$\J$-ary commutant}, which (if it exists) is a $\J$-ary monad $\TT^\perp_{\alpha,j}$ equipped with a canonical morphism $\TT^\perp_{\alpha,j} \rightarrow \UU$.  In view of the above, the $\J$-ary commutant is characterized by a universal property that we can phrase in terms of Kock's notion of commutation of cospans of monads \pbref{thm:univ_prop_jary_cmmtnt}.  In particular, by taking $\J = \V$ we obtain a notion of commutant for an arbitrary morphism of $\V$-monads $\alpha:\TT \rightarrow \UU$ on $\V$, namely the `$\V$-ary commutant' which we call the \textit{absolute commutant} $\TT^\perp_\alpha$ of $\alpha$ \pbref{def:jary_and_abs_cmtnt_for_mnds}.  We obtain strong general existence results for both $\J$-ary and absolute commutants (\bref{rem:existence_of_cmtnt_for_mnds}).

Given a morphism of $\J$-ary monads $\alpha:\TT \rightarrow \UU$ we can consider both its $\J$-ary commutant $\TT^\perp_{\alpha,j}$ and its absolute commutant $\TT^\perp_\alpha$, each of which is characterized by an (a priori) different universal property when it exists.  As we argue in \bref{rem:jary_vs_absolute}, we have no reason to expect that the $\J$-ary and absolute commutants would coincide in general.  Indeed, whereas the absolute commutant is always a \textit{submonad} $\TT^\perp_\alpha \hookrightarrow \UU$ \pbref{rem:jary_vs_absolute}, we have no reason to expect in general that the canonical morphism $\TT^\perp_{\alpha,j} \rightarrow \UU$ would be componentwise monic \pbref{rem:jary_vs_absolute}.

Nevertheless, we identify one important special case in which the $\J$-ary and absolute commutants coincide, namely the case in which the system of arities is the inclusion $\FinCard \hookrightarrow \Set$.  Indeed, given a morphism of finitary monads $\alpha:\TT \rightarrow \UU$ on $\Set$ we prove that the \textit{finitary commutant} of $\alpha$ coincides with the absolute commutant of $\alpha$ \pbref{thm:abs_cmtnt_is_finitary_cmtnt_over_set}.
  
Given a $\TT$-algebra $A$ for a $\V$-monad $\TT$ on $\V$, we define the \textit{absolute commutant of $A$} as the $\V$-monad $\TT^\perp_A$ corresponding to the commutant $\T^\perp_A$ of the $\T$-algebra $\T \rightarrow \V$ corresponding to $A$, where $\T$ denotes the $\V$-theory associated to $\TT$.  Here the notion of absolute commutant intersects with the notion of \textit{codensity monad} \cite{Kock:CodMnd}, as $\TT^\perp_A$ is equally the codensity $\V$-monad\footnote{See \cite[Ch. II]{Dub} for a definition in the enriched setting.} of the $\V$-functor $\T \rightarrow \V$ in this case \pbref{thm:abs_cmtnt_of_talg_vs_codensity_mnd}.

More generally, for an arbitrary system of arities $\J$ the notion of commutant of a $\T$-algebra $A:\T \rightarrow \C$ intersects with (a $\V$-enriched generalization of) Lawvere's notion of \textit{algebraic structure} \cite[III.1]{Law:PhD} in the case where $\C = \V$ \pbref{rem:jalg_str}.

Beyond our general existence result for commutants \pbref{thm:existence_of_commutant_via_intersections}, we prove that the commutant $\T^\perp_A$ of a $\T$-algebra $A:\T \rightarrow \V$ always exists as soon as $\J \hookrightarrow \V$ is eleutheric and $\V$ has equalizers \pbref{thm:cmtnt_alg_ele}.  In particular, for an arbitrary $\V$-monad $\TT$ on a symmetric monoidal closed category $\V$ with equalizers, the absolute commutant of a $\TT$-algebra $A$ always exists \pbref{def:abs_cmtnt_talg}.

A complementary abstract perspective on notions of commutation in a general framework of duoidal categories is provided by the very recent paper \cite{LfGa}.  The authors define notions of commutation and centralizer in a general setting, but the content, scope, context, methods, aims, and results of the latter article are very different from those of the present paper.  Elements of the present work were announced in a 2015 conference talk \cite{Lu:CT2015}, and the present paper provides part of the basis of a framework for measure and distribution monads outlined in that same talk and expounded in \cite{Lu:FDistn}.

\begin{Acknowledgement}  The author thanks the anonymous referee for helpful suggestions, which enabled a greatly shortened proof of \ref{thm:valued_in_thoms_quantifying_over_just_obj} as well as an improvement to the proof of \ref{thm:obj_of_jop_homs}.
\end{Acknowledgement}

\section{Some basic notions and lemmas}\label{sec:background}

\begin{ParSub}\label{par:str_mono}
A monomorphism $m:C \rightarrow D$ in a category $\C$ is called a \textbf{strong monomorphism} \cite{Ke:MonoEpiPb} provided that for all morphisms $e:A \rightarrow B$, $f:A \rightarrow C$, $g:B \rightarrow D$ in $\C$, if $e$ is an epimorphism and $g \cdot e = m \cdot f$ then $g$ factors through\footnote{Since $m$ is a monomorphism it then follows that the morphism $d:B \rightarrow C$ with $m \cdot d = g$ is unique and satisfies the equation $d \cdot e = f$.} $m$.  A subobject that is represented by a strong monomorphism is said to be a \textbf{strong subobject}.  Given a family of parallel pairs of morphisms $(h_\lambda,k_\lambda:D \rightarrow E_\lambda)_{\lambda \in \Lambda}$ in $\C$ indexed by a class $\Lambda$, let us call a limit of the resulting diagram in $\C$ a \textbf{pairwise equalizer} of the family $(h_\lambda,k_\lambda)$.  Such a limit is equivalently given by a morphism $m:C \rightarrow D$ satisfying an evident universal property, and it is easy to show directly that $m$ is necessarily a strong monomorphism.  If $\C$ has an equalizer $m_\lambda$ for each individual pair $(h_\lambda,k_\lambda)$, then each $m_\lambda$ is necessarily a strong monomorphism \cite[3.1]{Ke:MonoEpiPb}, and a pairwise equalizer of $(h_\lambda,k_\lambda)_{\lambda \in \Lambda}$ is equivalently a \textbf{(wide) intersection} of the family of strong monomorphisms $m_\lambda$, i.e. a fibre product of this family.
\end{ParSub}

\begin{ParSub}
Throughout the sequel, we fix an arbitrary closed symmetric monoidal category $(\V,\otimes,I,a,\ell,r,s)$ and employ the theory of $\V$-enriched categories, as documented in the classic works \cite{EiKe,Dub,Ke:Ba}.  By a \textit{morphism} in a $\V$-category $\C$ we mean a morphism in the ordinary category $\C_0$ underlying $\C$.  Concretely, a morphism $f:C \rightarrow D$ in $\C$ is therefore a morphism $I \rightarrow \C(C,D)$ in $\V$, but nevertheless we sometimes maintain a notational distinction between these notions by writing the latter morphism as $[f]$.  We denote by $\uV$ the $\V$-category canonically associated to $\V$, whose underlying ordinary category may be identified with $\V$ itself.
\end{ParSub}

\begin{ParSub}\label{par:faithful}
Recall that a $\V$-functor $G:\A \rightarrow \X$ is said to be \textbf{faithful} if its component morphisms $G_{AB}:\A(A,B) \rightarrow \X(GA,GB)$ are monomorphisms in $\V$.  We shall say that $G$ is \textbf{strongly faithful} if the $G_{AB}$ are, moreover, strong monomorphisms.
\end{ParSub}

\begin{ParSub}\label{par:cot}
Given an object $C$ of a $\V$-category $\C$ and an object $V$ of $\V$, recall that a \textbf{cotensor} of $C$ by $V$ in $\C$ is, by definition, an object $[V,C]$ of $\C$ equipped with an isomorphism of $\V$-functors
\begin{equation}\label{eq:cot}\C(-,[V,C]) \cong \uV(V,\C(-,C)):\C^\op \rightarrow \uV\;.\end{equation}
Therefore a cotensor of $C$ by $V$ is exactly a \textit{representation} of the rightmost $\V$-functor in \eqref{eq:cot} and so is equivalently given by an object $[V,C]$ with a morphism 
\begin{equation}\label{eq:cot_counit}\gamma_V^C:V \rightarrow \C([V,C],C)\;,\end{equation}
called the \textit{counit} of the representation, having the property that the $\V$-natural transformation $\C(-,[V,C]) \rightarrow \uV(V,\C(-,C))$ determined by $\gamma_V^C$ (via Yoneda) is an isomorphism.

Given a fixed object $V$ of $\V$ and cotensors $[V,C]$ in $\C$ for every object $C$ of some full sub-$\V$-category $\D \hookrightarrow \C$, we deduce by \cite[\S 1.10]{Ke:Ba} that there is a unique $\V$-functor $[V,-]:\D \rightarrow \C$ given on objects by $C \mapsto [V,C]$ such that the counits \eqref{eq:cot_counit} are $\V$-natural in $C \in \D$.  One can of course adapt this in an evident way to the case in which we instead have an arbitrary $\V$-functor $\D \rightarrow \C$ rather than a full sub-$\V$-category inclusion.  In particular, if we are given a pair of objects $(C_1,C_2)$ of $\C$ and cotensors $[V,C_1]$ and $[V,C_2]$ in $\C$ then the mapping $\{1,2\} \rightarrow \ob\C$ given by $i \mapsto C_i$ determines a fully-faithful $\V$-functor $\D \rightarrow \C$ when we define $\D$ to have objects $\{1,2\}$ and homs $\D(i,j) = \C(C_i,C_j)$.  Hence we obtain an induced $\V$-functor $[V,-]:\D \rightarrow \C$.  In particular, the given cotensors $[V,C_i]$ thus induce a morphism $[V,-]_{1,2}:\C(C_1,C_2) \rightarrow \C([V,C_1],[V,C_2])$ that we will sometimes write as $[V,-]_{C_1C_2}$, although strictly speaking this is an abuse of notation.  Indeed, we could even have $C_1 = C_2$ and yet still have a given pair of distinct (but isomorphic) cotensors $[V,C_1]$ and $[V,C_2]$, so that even our way of writing the given pair of cotensors conceals an abuse of notation.  With care in this regard, we will harness the $\V$-functoriality of cotensors in several subtle ways in the sequel by means of the following lemma, which is obvious in the general case but becomes quite useful in the degenerate cases captured by the corollaries that follow it:

\begin{LemSub}\label{thm:cot_lem}
Let $\C$ be a $\V$-category, let $V$ be an object of $\V$, and for each $i = 1,2,3,4$, let $C_i$ be an object of $\C$ equipped with a given cotensor $[V,C_i]$ in $\C$ (noting that the cotensor $[V,C_i]$ depends $i$ rather than just $C_i$).  Let $f_1:C_1 \rightarrow C_2$ and $f_3:C_3 \rightarrow C_4$ be isomorphisms in (the ordinary category underlying) $\C$, and for each $i = 1,3$ write $[V,f_i]:[V,C_i] \rightarrow [V,C_{i + 1}]$ for the induced isomorphism (noting that $[V,f_i]$ depends on $i$ rather than just $f_i$).  Then we have a commutative square
$$
\xymatrix{
\C(C_1,C_3) \ar[rr]^(.4){[V,-]_{13}} \ar[d]_{\C(f_1^{-1},f_3)}^\wr & & \C([V,C_1],[V,C_3]) \ar[d]^{\C([V,f_1]^{-1},[V,f_3])}_\wr \\
\C(C_2,C_4) \ar[rr]_(.4){[V,-]_{24}} & & \C([V,C_2],[V,C_4])
}
$$
whose left and right sides are isomorphisms.
\end{LemSub}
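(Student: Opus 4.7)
The plan is to exhibit the square as an instance of $\V$-functoriality by packaging the four given cotensors into a single $\V$-functor out of an auxiliary $\V$-category, in the manner already indicated in \bref{par:cot}. Concretely, I would let $\D$ be the $\V$-category with four objects $\{1,2,3,4\}$ and hom-objects $\D(i,j) = \C(C_i,C_j)$ with the composition and identities inherited from $\C$, so that the map $i \mapsto C_i$ extends to a fully faithful $\V$-functor $\D \to \C$. Since each $C_i$ is equipped with the chosen cotensor $[V,C_i]$, the construction recalled in \bref{par:cot} produces a $\V$-functor $[V,-]:\D \to \C$ whose hom-action at $(i,j)$ is precisely the morphism written $[V,-]_{ij}$ in the statement.

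Next I would observe that $f_1$ and $f_3$ are morphisms $1 \to 2$ and $3 \to 4$ in $\D$ (distinct from their identifications in $\C$), and that the morphisms $[V,f_1]$ and $[V,f_3]$ appearing in the statement are by definition their images under $[V,-]$. Because a $\V$-functor preserves identities and composition, it sends isomorphisms to isomorphisms, so $[V,f_i]^{-1} = [V,f_i^{-1}]$ for $i = 1,3$. This both establishes that the right vertical map is an isomorphism (the left one clearly is, via the hom bifunctor of $\C$) and reduces the claim to the $\V$-naturality of the family $[V,-]_{ij}$ in the pair $(i,j)$ applied to the morphisms $f_1^{-1}:2 \to 1$ and $f_3:3 \to 4$ of $\D$.

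That $\V$-naturality is the standard compatibility between a $\V$-functor and the hom bifunctor: for a $\V$-functor $F:\D \to \C$ and morphisms $u:i' \to i$, $v:j \to j'$ in $\D$, the square
$$
\xymatrix{
\D(i,j) \ar[r]^(.4){F_{ij}} \ar[d]_{\D(u,v)} & \C(Fi,Fj) \ar[d]^{\C(Fu,Fv)} \\
\D(i',j') \ar[r]_(.4){F_{i'j'}} & \C(Fi',Fj')
}
$$
commutes, which upon specialising to $F = [V,-]$, $u = f_1^{-1}$, $v = f_3$ is exactly the desired square. At the level of generalized elements $g:C_1 \to C_3$ this simply records the functoriality identity $[V,f_3 \cdot g \cdot f_1^{-1}] = [V,f_3] \cdot [V,g] \cdot [V,f_1]^{-1}$.

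The main point requiring care — rather than any genuine obstacle — is precisely the abuse of notation flagged in \bref{par:cot}: the chosen cotensor $[V,C_i]$ depends on the index $i$ and not merely on the underlying object $C_i$, so two of the $C_i$ may coincide while carrying distinct (though canonically isomorphic) cotensors. Introducing the auxiliary $\D$ with four distinct objects sidesteps this entirely, after which the argument is a direct appeal to $\V$-functoriality.
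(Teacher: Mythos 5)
Your proposal is correct and follows essentially the same route as the paper's own proof: both package the four indexed cotensors into a $\V$-functor $[V,-]:\D \rightarrow \C$ out of a four-object auxiliary $\V$-category $\D$ (via the construction recalled in \bref{par:cot}, i.e.\ \cite[\S 1.10]{Ke:Ba}) and then read the square off as an instance of $\V$-functoriality. Your write-up merely spells out the naturality square and the point about the notational abuse that the paper leaves implicit.
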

\begin{proof}
The mapping $\{1,2,3,4\} \rightarrow \ob\C$ given by $i \mapsto C_i$ extends to an identity-on-homs $\V$-functor $\D \rightarrow \C$ where $\ob\D = \{1,2,3,4\}$, and  by \cite[\S 1.10]{Ke:Ba} we obtain a $\V$-functor $[V,-]:\D \rightarrow \C$, given on objects by $i \mapsto [V,C_i]$, whose $\V$-functoriality now entails the needed result.
\end{proof}

\begin{CorSub}\label{thm:cots_of_same_obj}
Let $\C$ be a $\V$-category and let $V$ be an object of $\V$.  For each $j = 1,2$, let $D_j$ be an object of $\C$, let $[V,D_j]^{0}$ and $[V,D_j]^{1}$ be a given pair of (possibly distinct) cotensors of $D_j$ by $V$ in $\C$, and let $h_j:[V,D_j]^{0} \rightarrow [V,D_j]^{1}$ denote the induced isomorphism.  Then we have a commutative triangle
$$
\xymatrix{
\C(D_1,D_2) \ar[drr]_(.4){[V,-]^{1}_{12}} \ar[rr]^(.4){[V,-]^{0}_{12}} & & \C([V,D_1]^{0},[V,D_2]^{0}) \ar[d]^{\C(h_1^{-1},h_2)}_\wr\\
& & \C([V,D_1]^{1},[V,D_2]^{1})
}
$$
whose right side is an isomorphism.
\end{CorSub}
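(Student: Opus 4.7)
The plan is to deduce this corollary as an immediate special case of Lemma \bref{thm:cot_lem}. I would apply that lemma with $C_1 = C_2 := D_1$ and $C_3 = C_4 := D_2$, equipped respectively with the cotensors $[V, C_1] := [V, D_1]^{0}$, $[V, C_2] := [V, D_1]^{1}$, $[V, C_3] := [V, D_2]^{0}$, $[V, C_4] := [V, D_2]^{1}$, and with the isomorphisms $f_1 := \id_{D_1}$ and $f_3 := \id_{D_2}$. This is precisely the degenerate configuration to which the corollary alludes: the underlying objects $C_i$ coincide pairwise, while the cotensors attached to them may genuinely differ.

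Under this choice the induced isomorphisms $[V, f_1] : [V, D_1]^{0} \to [V, D_1]^{1}$ and $[V, f_3] : [V, D_2]^{0} \to [V, D_2]^{1}$ produced by Lemma \bref{thm:cot_lem} ought to coincide with the comparison isomorphisms $h_1$ and $h_2$ of the corollary. Granting this, the left-hand vertical of the square in Lemma \bref{thm:cot_lem} becomes $\C(\id_{D_1}^{-1}, \id_{D_2}) = \id_{\C(D_1, D_2)}$, the top and bottom horizontals become $[V, -]^{0}_{12}$ and $[V, -]^{1}_{12}$ respectively, and the right-hand vertical becomes $\C(h_1^{-1}, h_2)$. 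Commutativity of the square, together with invertibility of its right side, both of which are furnished by the lemma, are then exactly the desired conclusion of the corollary.

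The one point requiring any care, and the only step that is not pure translation of notation, is confirming that $[V, \id_{D_j}]$, as produced by the construction recalled in \bref{par:cot} via \cite[\S 1.10]{Ke:Ba}, coincides with the canonical comparison isomorphism $h_j$ between the two chosen cotensors $[V, D_j]^{0}$ and $[V, D_j]^{1}$. This reduces to uniqueness of the morphism $[V, D_j]^{0} \to [V, D_j]^{1}$ mediating the two counits $\gamma_V^{D_j}$ of the representations of the $\V$-functor $\uV(V, \C(-, D_j))$, which is forced by Yoneda. Hence I anticipate no real obstacle beyond this piece of bookkeeping, which is why the main lemma was stated in the slightly awkward form it was.
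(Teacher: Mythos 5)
Your proposal is correct and is essentially identical to the paper's own proof: it invokes Lemma \bref{thm:cot_lem} with exactly the same instantiation ($C_1 = C_2 = D_1$, $C_3 = C_4 = D_2$, the four cotensors assigned as you describe, and $f_1, f_3$ the identities), and then identifies $h_j$ with $[V,f_j]$. The extra remark about why $[V,1_{D_j}]$ is the canonical comparison isomorphism is a reasonable piece of bookkeeping that the paper leaves implicit.
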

\begin{proof}
Invoke \bref{thm:cot_lem} with $C_1 = C_2 = D_1$, $C_3 = C_4 = D_2$, $[V,C_1] = [V,D_1]^{0}$, $[V,C_2] = [V,D_1]^{1}$, $[V,C_3] = [V,D_2]^{0}$, $[V,C_4] = [V,D_2]^{1}$, $f_1 = 1_{D_1}$, and $f_3 = 1_{D_2}$.  Then $h_1 = [V,f_1]$, $h_2 = [V,f_3]$, and the result is obtained.
\end{proof}

\begin{CorSub}\label{thm:cot_structs_on_same_objs}
Let $\C$ be a $\V$-category, let $V$ be an object of $\V$, and for each $i = 1,2,3,4$, let $C_i$ be an object of $\C$.  For each $i = 1,3$, let $f_i:C_i \rightarrow C_{i + 1}$ be an isomorphism in $\C$, and let $E_i$ be an object of $\C$ that is equipped with two cotensor structures
$$[V,C_i] = E_i = [V,C_{i + 1}]$$
in $\C$ such that the induced isomorphism $[V,f_i]:[V,C_i] \rightarrow [V,C_{i + 1}]$ is the identity morphism on $E_i$.  Then we have a commutative triangle
$$
\xymatrix{
\C(C_1,C_3) \ar[drr]^{[V,-]_{13}} \ar[d]_{\C(f_1^{-1},f_3)}^\wr & &  \\
\C(C_2,C_4) \ar[rr]_{[V,-]_{24}} & & \C(E_1,E_3)
}
$$
whose left side is an isomorphism, where $[V,-]_{13}$ and $[V,-]_{24}$ are defined as in \bref{thm:cot_lem}.
\end{CorSub}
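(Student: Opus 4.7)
The plan is to reduce the corollary to a direct application of Lemma \bref{thm:cot_lem}. Since $[V,C_1] = E_1 = [V,C_2]$ and $[V,C_3] = E_3 = [V,C_4]$ by hypothesis, both cotensor morphisms $[V,-]_{13}$ and $[V,-]_{24}$ have codomain $\C(E_1,E_3)$, so the statement genuinely is a triangle rather than a square.

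First I invoke \bref{thm:cot_lem} with the given $C_1,C_2,C_3,C_4$, $f_1,f_3$, and with the chosen cotensor structures $[V,C_i] = E_{\lceil i/2 \rceil}$ for $i = 1,2,3,4$ (i.e.\ $[V,C_1] = [V,C_2] = E_1$ and $[V,C_3] = [V,C_4] = E_3$). This yields a commutative square
$$
\xymatrix{
\C(C_1,C_3) \ar[rr]^(.45){[V,-]_{13}} \ar[d]_{\C(f_1^{-1},f_3)}^\wr & & \C(E_1,E_3) \ar[d]^{\C([V,f_1]^{-1},\,[V,f_3])}_\wr \\
\C(C_2,C_4) \ar[rr]_(.45){[V,-]_{24}} & & \C(E_1,E_3)
}
$$
whose vertical sides are isomorphisms.

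Next I use the remaining hypothesis: the induced isomorphism $[V,f_1]:[V,C_1] \to [V,C_2]$ is the identity on $E_1$, and similarly $[V,f_3]$ is the identity on $E_3$. Hence the right-hand vertical morphism equals $\C(1_{E_1},1_{E_3}) = 1_{\C(E_1,E_3)}$, so the square collapses to the triangle
$$
\xymatrix{
\C(C_1,C_3) \ar[drr]^{[V,-]_{13}} \ar[d]_{\C(f_1^{-1},f_3)}^\wr & &  \\
\C(C_2,C_4) \ar[rr]_{[V,-]_{24}} & & \C(E_1,E_3)
}
$$
of the statement, with left side an isomorphism as required. There is no real obstacle here beyond carefully tracking that the $\V$-functoriality of the cotensor construction supplied by \bref{thm:cot_lem} really does specialize to the degenerate situation in which the two cotensor structures on each $E_i$ are identified by identity morphisms; this is precisely the bookkeeping warned about in the discussion preceding \bref{thm:cot_lem}.
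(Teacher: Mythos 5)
Your proposal is correct and is exactly the paper's argument: the paper's own proof says only that the corollary ``follows immediately from \bref{thm:cot_lem}'', and you have simply written out the intended instantiation (taking $[V,C_1]=[V,C_2]=E_1$, $[V,C_3]=[V,C_4]=E_3$) and observed that the hypothesis $[V,f_i]=1_{E_i}$ makes the right-hand side of the lemma's square the identity, collapsing it to the stated triangle.
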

\begin{proof}
This follows immediately from \bref{thm:cot_lem}.
\end{proof}
\end{ParSub}

\section{Enriched algebraic theories and their algebras}\label{sec:enr_alg_th,rem:assume_subcat}

In the present section we review some basic material concerning enriched algebraic theories for a system of arities \cite{Lu:EnrAlgTh}, together with certain further points needed for the sequel, and we consider several examples, including a number of specific examples of classical Lawvere theories that are treated in more detail in \cite{Lu:CvxAffCmt}.

\begin{ParSub}[\textbf{$\J$-theories for a system of arities}]\label{par:sys_ar}
In the terminology of \cite{Lu:EnrAlgTh}, a \textbf{system of arities} in a symmetric monoidal closed category $\V$ is a fully faithful symmetric strong monoidal $\V$-functor $j:\J \rightarrowtail \uV$.  Any full sub-$\V$-category $\J \hookrightarrow \uV$ containing $I$ and closed under $\otimes$ is a system of arities, and any system of arities is equivalent to one of this special form \cite[3.8]{Lu:EnrAlgTh}.  Hence by the convention of \cite[3.9]{Lu:EnrAlgTh} we often write as if given systems of arities are of this form, and for many purposes we can assume this without loss of generality.  Given a system of arities $j:\J \hookrightarrow \uV$, a $\J$-\textbf{theory} \cite[4.1]{Lu:EnrAlgTh} is a $\V$-category $\T$ equipped with an identity-on-objects $\V$-functor $\tau:\J^\op \rightarrow \T$ that preserves $\J$-cotensors, i.e. that preserves cotensors by all objects $J$ of $\J$ (or, rather, their associated objects $j(J)$ of $\V$).  The notion of $\J$-theory specializes to yield various different existing notions of enriched algebraic theory for different choices of $\J$ and $\V$, as in the following examples from \cite[\S 3, 4.2]{Lu:EnrAlgTh}:
\end{ParSub}

\begin{ExaSub}\label{exa:sys_ar_jth}\emptybox
\begin{enumerate}
\item[(a)] Letting $\V = \Set$, we can take $\J = \FinCard \hookrightarrow \V$ to be the full subcategory consisting of all finite cardinals, and then the resulting notion of $\J$-theory is Lawvere's notion of algebraic theory \cite{Law:PhD}.  These are often called \textbf{Lawvere theories}.
\item[(b)] Letting $\V$ be \textit{locally finitely presentable as a closed category} \cite{Ke:FL}, we can take $\J \hookrightarrow \uV$ to be the full sub-$\V$-category $\Vfp$ consisting of the finitely presentable objects, and then the resulting notion of $\J$-theory is the notion of \textit{enriched Lawvere theory} defined by Power in \cite{Pow:EnrLaw}.
\item[(c)] Letting $\J = \uV$ and taking $j:\uV \rightarrow \uV$ to be the identity $\V$-functor, the resulting notion of $\J$-theory is Dubuc's notion of \textit{$\V$-theory} \cite{Dub:EnrStrSem}, which coincides, up to an equivalence, with the notion of $\V$-monad on $\uV$ \cite[11.10]{Lu:EnrAlgTh}.
\item[(d)] The one-object full sub-$\V$-category $\{I\} \hookrightarrow \uV$ carries the structure of a system of arities, and $\{I\}$-theories are the same as monoids in the monoidal category $\V$.  This example is analyzed in \cite[3.6, 4.2]{Lu:EnrAlgTh} on the basis of the fact that the $\V$-category $\{I\}$ is isomorphic to the \textit{unit $\V$-category} $\mathbb{I}$, which is the one-object $\V$-category determined by the commutative monoid $I$ in $\V$ and has the property that $\V$-functors $\mathbb{I} \rightarrow \C$ valued in any $\V$-category $\C$ correspond bijectively to objects of $\C$.  When $\V = \Ab$ is the category of abelian groups, $\{\ZZ\}$-theories are the same as rings.
\item[(e)] Assuming that $\V$ has finite copowers $n \cdot I$ $(n \in \NN)$ of the unit object $I$, there is a system of arities $j:\NN_\V \rightarrowtail \uV$ with $\ob\NN_\V = \NN$, such that $j$ is given on objects by $n \mapsto n \cdot I$ and $j$ is identity-on-homs.  $\NN_\V$ is a symmetric strict monoidal $\V$-category under multiplication of natural numbers.  The resulting notion of $\J$-theory for this particular system of arities is equivalent to the notion of enriched algebraic theory defined by Borceux and Day in \cite{BoDay}; see \cite[4.2 \#6]{Lu:EnrAlgTh}.
\end{enumerate}
\end{ExaSub}

\begin{ParSub}\label{par:des_jcot}
An object $C$ of a $\V$-category $\C$ is said to have \textbf{designated} $\J$-\textbf{cotensors} if it is equipped with a specified choice of cotensor $[J,C]$ in $\C$ for each object $J$ of $\J$.  These designated $\J$-cotensors are said to be \textbf{standard} if $[I,C]$ is just $C$ itself, with the identity morphism $I \rightarrow \C(C,C)$ as counit.  We say that $\C$ has \textbf{(standard) designated} $\J$-\textbf{cotensors} if each object of $\C$ has (standard) designated $\J$-cotensors.  For example, $\uV$ itself is endowed with standard designated $\J$-cotensors $[J,V]$ of each of its objects $V$ when we force $[I,V] = V$ and take $[J,V] = \uV(J,V)$ otherwise.  In any $\J$-theory $\T$ the object $I$ has standard designated $\J$-cotensors (by \cite[4.3]{Lu:EnrAlgTh}) since each object $J$ of $\J$ serves as a cotensor $[J,I] = J$ in $\T$, with counit $\gamma_J$ defined as the composite
\begin{equation}\label{eq:cot_counit_gammaj}J \xrightarrow{\sim} \uV(I,J) = \J^\op(J,I) \xrightarrow{\tau_{JI}} \T(J,I)\;,\end{equation}
whose first factor is the canonical isomorphism.  Moreover, every $\J$-theory $\T$ has standard designated $\J$-cotensors of each of its objects; see \bref{par:lrdes_cots} below.  In fact, by \cite[5.8]{Lu:EnrAlgTh} the notion of $\J$-theory is equivalently defined as a $\V$-category $\T$ with $\ob\T = \ob\J$ in which each object $J$ is equipped with the structure of a cotensor $[J,I]$ such that these designated $\J$-cotensors are standard.  In this way, the seemingly trifling condition of standardness of $\J$-cotensors is in fact implicit in the definition of $\J$-theory.
\end{ParSub}

\begin{ParSub}[$\T$-\textbf{algebras}]\label{par:talgs} Given a $\J$-theory $\T$ and a $\V$-category $\C$, a $\T$-\textbf{algebra} in $\C$ is a $\J$-cotensor-preserving $\V$-functor $A:\T \rightarrow \C$.  We often call $\T$-algebras in $\uV$ simply $\T$-algebras.  A $\V$-functor $A:\T \rightarrow \C$ is a $\T$-algebra as soon as it preserves $\J$-cotensors of $I$ (\cite[5.9]{Lu:EnrAlgTh}).  Given a $\T$-algebra $A:\T \rightarrow \C$, we call the object $\ca{A} := AI$ of $\C$ the \textbf{carrier} of $A$.  Since $\T$ has standard designated $\J$-cotensors $[J,I] = J$ of $I$ and $A$ preserves $\J$-cotensors, $AJ$ is a cotensor of $\ca{A}$ by $J$ for each object $J$ of $\J$, and $A$ thus equips its carrier $\ca{A}$ with standard designated $\J$-cotensors.  Now supposing that $\C$ (already) has standard designated $\J$-cotensors, a \textbf{normal} $\T$-\textbf{algebra} in $\C$ is, by definition, a $\V$-functor $A:\T \rightarrow \C$ that \textit{strictly} preserves the designated $\J$-cotensors $[J,I] = J$ of $I$ in $\T$, i.e. sends them to the designated $\J$-cotensors $[J,\ca{A}]$ of $\ca{A}$ in $\C$.
\end{ParSub}

\begin{ParSub}[\textbf{The $\V$-category of} $\T$-\textbf{algebras}]\label{par:vcat_talgs}
Given $\T$-algebras $A,B:\T \rightarrow \C$, we call $\V$-natural transformations between $\T$-algebras $\T$-\textbf{homomorphisms}.  The \textbf{object of} $\T$-\textbf{homo}\-\textbf{mor}\-\textbf{phisms} from $A$ to $B$ is, by definition, the object of $\V$-natural transformations from $A$ to $B$, i.e. the end
$$\Alg{\T}_\C(A,B) = \int_{J \in \T}\C(AJ,BJ)$$
in $\uV$, which may or may not exist.  If $\Alg{\T}_\C(A,B)$ exists for all $\T$-algebras $A$ and $B$ in $\C$, then we obtain a $\V$-category $\Alg{\T}_\C$ whose objects are the $\T$-algebras in $\C$.    Analogously we define the $\V$-category $\Alg{\T}^!_\C$ of normal $\T$-algebras in $\C$, which is then a full sub-$\V$-category of $\Alg{\T}_\C$ when the latter exists.  In this case, there is in fact an equivalence of $\V$-categories $\Alg{\T}^!_\C \simeq \Alg{\T}_\C$ (\cite[5.14]{Lu:EnrAlgTh}).  In the case where $\C = \uV$, we often write simply $\Alg{\T}$ (resp. $\Alg{\T}^!$) for the $\V$-category of $\T$-algebras in $\uV$.  

When $\Alg{\T}_\C$ exists, we obtain by \cite[\S 2.2]{Ke:Ba} a $\V$-functor
$$\ca{\text{$-$}} = \Ev_I:\Alg{\T}_\C \rightarrow \C$$
given by evaluation at $I$.  Therefore $\ca{\text{$-$}}$ sends each $\T$-algebra $A$ to its carrier $\ca{A}$.
\end{ParSub}

\begin{ExaSub}[\textbf{Left $R$-modules with} $\V = \Ab$]
For the system of arities $\{I\} \hookrightarrow \uV$ of \bref{exa:sys_ar_jth}(d),   we know that an $\{I\}$-theory $\R$ is the same as a monoid $R$ in $\V$, with $R = \R(I,I)$, and in the case of $\V =\Ab$ (where $I = \ZZ$) these are rings.  Moreover, an $\R$-algebra $M:\R \rightarrow \uV$ in the above sense is the same as a left $R$-module $M$ in $\V$ \cite[5.3 \#3]{Lu:EnrAlgTh}.  For example, when $\V = \Ab$ these are precisely left $R$-modules in the usual sense.  Thus a ring $R$ can be viewed as the $\{\ZZ\}$-theory of left $R$-modules.
\end{ExaSub}

\begin{ExaSub}[\textbf{Left $R$-modules with} $\V = \Set$]\label{exa:law_th_rmods}
The category $\Mod{R}$ of left $R$-modules for a ring $R$, or more generally a \textit{rig} (or \textit{semiring}) $R$, is isomorphic to the category of normal $\T$-algebras $\Alg{\T}^!$ for a Lawvere theory $\T$; see, e.g., \cite[2.8]{Lu:CvxAffCmt}.  The associated theory $\T$ is the category $\Mat_R$ of $R$-\textbf{matrices}, whose objects are natural numbers and whose morphisms $X:n \rightarrow m$ are $m \times n$-matrices with entries in $R$, with composition given by matrix multiplication.
\end{ExaSub}

\begin{ExaSub}[\textbf{The Lawvere theory of commutative $k$-algebras}]\label{exa:lth_ckalgs}
Given a commutative ring $k$, the category of commutative $k$-algebras is isomorphic to the category $\Alg{\T}^!$ of normal $\T$-algebras for a Lawvere theory $\T$ in which $\T(n,1) = k[x_1,...,x_n]$ is the set of polynomials in $n$ variables over $k$; see, e.g. \cite[2.9]{Lu:CvxAffCmt}
\end{ExaSub}

\begin{ExaSub}[\textbf{The Lawvere theory of semilattices}]\label{exa:slats}
A \textbf{(bounded) join semilattice} is a partially ordered set with finite joins.  Equipping the set $2 = \{0,1\}$ with the structure of a rig with additive monoid $(2,\vee,0)$ and multiplicative monoid $(2,\wedge,1)$, the category of join semilattices (and maps preserving finite joins) is isomorphic to the category $\Mod{2}$ of $2$-modules and so (by \bref{exa:law_th_rmods}) is isomorphic to the category $\Alg{\T}^!$ of normal $\T$-algebras for the Lawvere theory $\T = \Mat_2$.  See, e.g., \cite[2.10]{Lu:CvxAffCmt}.
\end{ExaSub}

\begin{ParSub}[\textbf{Morphisms of} $\J$-\textbf{theories}]\label{def:morph_th}
Given $\J$-theories $(\T,\tau)$ and $(\U,\upsilon)$, a \textbf{(normal) morphism of} $\J$-\textbf{theories} $A:\T \rightarrow \U$ is a $\V$-functor such that $A \circ \tau = \upsilon$.  A morphism of $\J$-theories $A:\T \rightarrow \U$ is the same as a normal $\T$-algebra in $\U$ with carrier $I$ \cite[5.16]{Lu:EnrAlgTh}.  Observe that $\J^\op$, when equipped with the identity $\V$-functor, is an initial object of the resulting \textbf{category of} $\J$-\textbf{theories} $\ThJ$.  A \textbf{subtheory} of a $\J$-theory $\U$ is a $\J$-theory $\T$ equipped with a morphism $\iota:\T \hookrightarrow \U$ that is faithful (as a $\V$-functor, \bref{par:faithful}), and we say that $\T$ is a \textbf{strong subtheory} of $\U$ if, moreover, $\iota$ is strongly faithful \pbref{par:faithful}.
\end{ParSub}

\begin{ExaSub}[\textbf{Ring homomorphisms}]
Given monoids $R$ and $U$ in $\V$, we can consider $R$ and $U$ as $\{I\}$-theories $\R$ and $\U$ for the system of arities $\{I\} \hookrightarrow \uV$, and then morphisms of $\{I\}$-theories $a:\R \rightarrow \U$ are the same as homomorphisms of monoids $a:R \rightarrow U$ in $\V$.  When $\V = \Ab$, these are the same as ring homomorphisms $R \rightarrow U$.
\end{ExaSub}

\begin{RemSub}\label{rem:morphs_uniq_det_ji_comps}
Given a normal $\T$-algebra $A:\T \rightarrow \C$ with carrier $C$ (and in particular, any morphism of $\J$-theories), $A$ preserves the designated cotensors $[J,I] = J$ of $I$ and so it follows that for all $J,K \in \ob\J$ we have a commutative square
$$
\xymatrix{
\T(J,K) \ar[d]_\wr \ar[rr]^{A_{JK}} & & \C([J,\ca{A}],[K,\ca{A}]) \ar[d]^\wr\\
\uV(K,\T(J,I)) \ar[rr]_{\uV(K,A_{JI})} & & \uV(K,\C([J,\ca{A}],\ca{A}))
}
$$
whose left and right sides are isomorphisms.  Thus $A_{JK}$ can be expressed in terms of $A_{JI}$.  Hence a normal $\T$-algebra $A$ is uniquely determined by its carrier and its components $A_{JI}$ $(J \in \ob\J)$.
\end{RemSub}

\begin{ExaSub}[\textbf{Affine spaces over a ring or rig}]\label{exa:raff}
Let $R$ be a ring, or more generally, a rig.  Recall that the category of $R$-matrices $\Mat_R$ is the Lawvere theory of left $R$-modules \pbref{exa:law_th_rmods}.  There is a subtheory $\Mat_R^\aff$ of $\Mat_R$ consisting of those matrices in which each row sums to $1$, and we call normal $\Mat_R^\aff$-algebras \textbf{(left)} $R$-\textbf{affine spaces}.  See, e.g., \cite[3.2]{Lu:CvxAffCmt}.
\end{ExaSub}

\begin{ExaSub}[\textbf{Convex spaces}]\label{exa:cvx_sp}
The set $\RR_+$ of all non-negative real numbers is a rig, as it is a subrig of the ring $\RR$.  We call $\RR_+$-affine spaces ($\RR$-)\textbf{convex spaces}.  See, e.g., \cite{Lu:CvxAffCmt}.
\end{ExaSub}

\begin{ExaSub}[\textbf{Unbounded semilattices as affine spaces}]\label{exa:unb_slat}
An \textbf{unbounded join semilattice} is a poset in which every pair of elements has a join.  The category of unbounded join semilattices and maps preserving binary joins is isomorphic to the category of affine spaces over the rig $(2,\vee,0,\wedge,1)$ \cite[3.3]{Lu:CvxAffCmt}.
\end{ExaSub}

The following is a direct generalization of \S 2.11 of the author's paper \cite{Lu:CvxAffCmt} in the finitary $\Set$-based case, which we have adapted word-for-word in order to clearly emphasize the parallel:

\begin{DefSub}[\textbf{The full theory of an object}]\label{def:full_theory}
If a given object $C$ of a $\V$-category $\C$ has standard designated $\J$-cotensors $[J,C]$ then we obtain a $\J$-theory $\C_C$, called the \textbf{full $\J$-theory of $C$} in $\C$, with
$$\C_C(J,K) = \C([J,C],[K,C]),\;\;\;\;\;\;J,K \in \ob\C_C = \ob\J$$
such that the mapping $\ob\J \rightarrow \ob\C$, $J \mapsto [J,C]$, extends to an identity-on-homs $\V$-functor $\C_C \rightarrowtail \C$, which is evidently a $\C_C$-algebra in $\C$ with carrier $C$.  When $\V = \Set$ and $\J = \FinCard$, we call $\C_C$ the \textit{full finitary theory} of $C$ in $\C$.

In particular, any $\T$-algebra $A:\T \rightarrow \C$ endows its carrier $\ca{A} = AI$ with standard designated $\J$-cotensors $[J,\ca{A}] = AJ$ \pbref{par:talgs}, with respect to which we can form the full $\J$-theory of $\ca{A}$, which we shall denote by $\C_A$.  The given $\T$-algebra $A$ then factors uniquely as
$$
\xymatrix{
\T \ar[dr]_A \ar@{..>}[r]^{A'} & \C_A \ar@{ >->}[d]\\
                         & \C
}
$$
where $A'$ is a morphism of $\J$-theories, given on homs just as $A$.  By abuse of notation, we often write simply $A$ to denote the morphism $A'$.

In the case that $\C$ has standard designated $\J$-cotensors, morphisms of $\J$-theories $\T \rightarrow \C_C$ into the full $\J$-theory of an object $C$ of $\C$ are evidently in bijective correspondence with normal $\T$-algebras in $\C$ with carrier $C$.  Note also that the canonical $\C_C$-algebra $\C_C \rightarrowtail \C$ is normal in this case.
\end{DefSub}

\begin{ExaSub}[\textbf{The endomorphism ring of an abelian group}]
Take $\V = \Ab$ and $\J = \{\ZZ\}$, and let $M$ be an abelian group.  Then the full $\{\ZZ\}$-theory $\Ab_M$ of $M$ in $\Ab$ is the ring $\End_\ZZ(M)$ of all endomorphisms of $M$.
\end{ExaSub}

\begin{ExaSub}[\textbf{The Lawvere theory of Boolean algebras}]\label{exa:bool}
The category of Boolean algebras is isomorphic to the category $\Alg{\T}^!$ of normal $\T$-algebras where $\T = \Set_2$ is the full finitary theory of $2 = \{0,1\}$ in $\Set$; see \cite[III.1, Example 4]{Law:PhD} and \cite[2.12]{Lu:CvxAffCmt}.
\end{ExaSub}

\begin{ParSub}[\textbf{The left, right, and designated $\J$-cotensors in a theory $\T$}]\label{par:lrdes_cots}
As we noted above, every $\J$-theory $\T$ has all $\J$-cotensors, and in the sequel it will be convenient to make use of multiple distinct ways of forming $\J$-cotensors in $\T$, with separate notations for each, as follows.

\begin{enumerate}
\item Firstly, for each pair of objects $J,K$ of $\J$, the coevaluation morphism
$$\Coev:J \rightarrow \uV(K,J\otimes K) = \J^\op(J\otimes K,K)$$
exhibits $J\otimes K$ as a cotensor $[J,K]$ of $K$ by $J$ in $\J^\op$.  Hence the composite
$$\gamma^K_J = \left(J \xrightarrow{\Coev} \J^\op(J\otimes K,K) \xrightarrow{\tau_{J\otimes K,K}} \T(J\otimes K,K)\right)$$
exhibits $J \otimes K$ as a cotensor $[J,K]$ in $\T$, which we write as 
$$[J,K]_\ell = J \otimes K$$
and call the \textbf{left cotensor} of $K$ by $J$.

\item Secondly, since $\V$ is \textit{symmetric} monoidal closed, we have another coevaluation morphism $\Coev':J \rightarrow \uV(K,K\otimes J)$ that is related to the morphism $\Coev$ from 1 via the equation $\Coev' = \uV(K,s_{JK}) \cdot \Coev$, where $s_{JK}:J \otimes K \rightarrow K \otimes J$ is the symmetry.  It follows that the composites
\begin{equation}\label{eq:direct_charn_right_cot}J \xrightarrow{\Coev'} \uV(K,K\otimes J) = \J^\op(K\otimes J,K) \xrightarrow{\tau_{K\otimes J,K}} \T(K\otimes J,K)\end{equation}
and
$$J \xrightarrow{\gamma^K_J} \T(J\otimes K,K) \xrightarrow{\T(\tau(s_{JK}),1)} \T(K\otimes J,K)$$
are equal and present $K \otimes J$ as a cotensor of $K$ by $J$ in $\T$, which we write as 
$$[J,K]_r = K\otimes J$$
and call the \textbf{right cotensor} of $K$ by $J$.

\item Thirdly, recall that the objects $J$ of $\T$ themselves serve as standard designated $\J$-cotensors $[J,I] = J$ of $I$ \pbref{par:des_jcot}.  These are in general neither the left nor the right cotensors (which are not standard in general), and so it is convenient to fix a choice of standard designated $\J$-cotensors $[J,K]$ in $\T$ that coincides with the basic choice $[J,I] = J$ in the case that $K = I$.  We shall call these the \textbf{(standard) designated} $\J$-\textbf{cotensors} in $\T$ and write them simply as $[J,K]$.
\end{enumerate}
\end{ParSub}

\begin{RemSub}\label{rem:morph_pres_lr_jcots}
A morphism of $\J$-theories $A:\T \rightarrow \U$ strictly preserves the left $\J$-cotensors $[J,K]_\ell = J\otimes K$ and also the right $\J$-cotensors $[J,K]_r = K\otimes J$. Indeed, this follows immediately from the descriptions of the right and left cotensor counits given in \bref{par:lrdes_cots}.
\end{RemSub}

\begin{ParSub}[\textbf{Cotensors of algebras}]\label{par:cot_alg}
If $\C$ is a $\V$-category with $\J$-cotensors, then the $\V$-category of $\T$-algebras $\Alg{\T}_\C$ has $\J$-cotensors as soon as it exists.  Indeed, given an object $J$ of $\J$ and a $\T$-algebra $A:\T \rightarrow \C$, a cotensor $[J,A]$ can be formed \textit{pointwise}, as the composite
$$[J,A-] = \left(\T \xrightarrow{A} \C \xrightarrow{[J,-]} \C\right),$$
which is a $\T$-algebra since $[J,-]$ preserves cotensors.

In the case where $\C$ is itself a $\J$-theory $\C = \U$, the three canonical choices of $\J$-cotensors in $\U$ \pbref{par:lrdes_cots} give rise to three different choices of pointwise $\J$-cotensors in $\Alg{\T}_\U$, namely the \textit{(pointwise) left cotensors} $[J,A]_\ell = [J,A-]_\ell$, the \textit{(pointwise) right cotensors} $[J,A]_r = [J,A-]_r$, and the \textit{(pointwise) designated cotensors} $[J,A] = [J,A-]$. 

For a morphism of $\J$-theories $A:\T \rightarrow \U$, we have
\begin{equation}\label{eq:mor_com_lcot}[J,A]_\ell = [J,A-]_\ell = [J,-]_\ell \circ A = A \circ [J,-]_\ell = A([J,-]_\ell)\end{equation}
\begin{equation}\label{eq:mor_com_rcot}[J,A]_r = [J,A-]_r = [J,-]_r \circ A = A \circ [J,-]_r = A([J,-]_r)\end{equation}
for all $J \in \ob\J$, since $A$ strictly preserves the right and left $\J$-cotensors \pbref{rem:morph_pres_lr_jcots}. 
\end{ParSub}

\section{The object of homomorphisms}

Our study of commutation and commutants for $\J$-theories will be enabled by a detailed study of the \textit{object of $\T$-homomorphisms} $\Alg{\T}_\C(A,B) = \int_{J \in \T}\C(AJ,BJ)$ for a pair of $\T$-algebras $A,B:\T \rightarrow \C$ \pbref{par:vcat_talgs}.  We begin by treating the case of the initial $\J$-theory $\J^\op$.

\begin{PropSub}\label{thm:obj_of_jop_homs}
For all $\J^\op$-algebras $A,B:\J^\op \rightarrow \C$, there are morphisms
\begin{equation}\label{eq:obj_jop_homs}\C(\ca{A},\ca{B}) \xrightarrow{\lambda^{AB}_J\:=\:[J,-]_{|A||B|}} \C(AJ,BJ)\;\;\;\;(J \in \J)\end{equation}
that present $\C(\ca{A},\ca{B})$  as the object of $\J^\op$-homomorphisms
$$\C(\ca{A},\ca{B}) = [\J^\op,\C](A,B)\;.$$
\end{PropSub}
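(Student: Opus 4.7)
The plan is to verify directly that $\C(\ca{A}, \ca{B})$, equipped with the family $(\lambda^{AB}_J)_J$, satisfies the universal property of the end $\int_{J \in \J^\op} \C(AJ, BJ)$ in $\uV$, from which the asserted equality $\C(\ca{A}, \ca{B}) = [\J^\op, \C](A, B)$ follows.

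First I would record two preparatory observations. Since the designated $\J$-cotensors in $\C$ are standard, $[I,-]_{CC'} = 1_{\C(C, C')}$ for all $C, C' \in \C$, so $\lambda^{AB}_I$ is the identity on $\C(\ca{A}, \ca{B})$. This alone handles uniqueness in the universal property: given any wedge $(w_J: W \to \C(AJ, BJ))_J$, a factorization $u: W \to \C(\ca{A}, \ca{B})$ must satisfy $u = \lambda^{AB}_I \cdot u = w_I$. Second, since $A$ and $B$ preserve the cotensor $J = [J, I]_\ell$ in $\J^\op$ (with counit $\gamma^I_J$), the objects $AJ$ and $BJ$ are cotensors of $\ca{A}$ and $\ca{B}$ by $J$ in $\C$, with counits $A_{JI} \cdot \gamma^I_J$ and $B_{JI} \cdot \gamma^I_J$, respectively. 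The family $(\lambda^{AB}_J)_J$ is then a wedge by the $\V$-naturality of cotensor functoriality (\bref{thm:cot_lem} and its corollaries) combined with the $\V$-functoriality of $A$ and $B$.

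The substantive step is to verify the factorization property: for any wedge $(w_J)_J$, one has $w_J = \lambda^{AB}_J \cdot w_I$ for all $J \in \J$. I would invoke the universal property of $BJ$ as a cotensor: morphisms $W \to \C(AJ, BJ) = \C(AJ, [J, \ca{B}])$ correspond $\V$-naturally to morphisms $J \to \uV(W, \C(AJ, \ca{B}))$ via the counit $B_{JI} \cdot \gamma^I_J$. It then suffices to show that $w_J$ and $\lambda^{AB}_J \cdot w_I$ have the same transpose under this correspondence. The transpose of $w_J$ can be computed via the dinaturality of the wedge $w$ applied to $\gamma^I_J$, using that $B_{JI}$ is the hom-component of $B$ viewed as a $\V$-functor. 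The transpose of $\lambda^{AB}_J \cdot w_I$ follows from the defining relation of $\lambda^{AB}_J$ via the $\V$-functoriality of $[J,-]$, after tracing through the counit of $AJ = [J, \ca{A}]$. Both computations should deliver the same morphism, built from $w_I$ by precomposition with the cotensor counit of $AJ$.

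The main obstacle is the bookkeeping in this last step: the dinaturality of $w$ must be applied to precisely the ``generalized element'' $\gamma^I_J$ of $\J^\op(J, I)$, and the $\V$-functoriality of $A$, $B$, and $[J,-]$ must be tracked in concert. Once the two transposes are identified, the cotensor universal property for $BJ$ forces the desired equation $w_J = \lambda^{AB}_J \cdot w_I$, completing the proof.
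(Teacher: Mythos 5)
Your proof is correct in outline but takes a genuinely different route from the paper's. The paper argues abstractly: by \cite[5.8]{Lu:EnrAlgTh} the algebra $B$ is the $\V$-functor $[-,BI]$ induced by the cotensors $BJ = [J,BI]$, hence is a right Kan extension of its restriction along the $\V$-functor $\iota:\mathbb{I} \rightarrow \J^\op$ picking out $I$; Kelly's Theorem 4.38 then yields at once that the end $[\J^\op,\C](A,B)$ exists and that evaluation at $I$ is an isomorphism $\varphi:[\J^\op,\C](A,B) \xrightarrow{\sim} \C(AI,BI)$, after which one only checks that the wedge $(\lambda^{AB}_J)$ induces a section of $\varphi$ (using $\lambda^{AB}_I = 1$, the same observation that drives your uniqueness step). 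You instead verify the universal property of the end by hand: uniqueness from $\lambda^{AB}_I = 1$, and the factorization $w_J = \lambda^{AB}_J \cdot w_I$ by transposing both sides under the cotensor universal property of $BJ = [J,\ca{B}]$ and matching the transposes via (i) extraordinary $\V$-naturality of the wedge $w$ applied to the counit of the designated cotensor $[J,I]=J$ and (ii) the $\V$-naturality of the cotensor counits defining $[J,-]$. This is sound --- the two transposes do both reduce to precomposition of $w_I$ with the counit of $AJ$ --- and it has the virtue of being self-contained and of making explicit the computation that the appeal to Kelly 4.38 packages away; the cost is the transposition bookkeeping you acknowledge, which the Kan-extension argument avoids entirely. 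One small notational point: the relevant counit is $\gamma_J$ for the \emph{standard designated} cotensor $[J,I]=J$ of \bref{par:des_jcot}, not $\gamma^I_J$ for the left cotensor $[J,I]_\ell = J\otimes I$ of \bref{par:lrdes_cots}; this does not affect the substance of your argument.
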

\begin{proof}
By \cite[5.8]{Lu:EnrAlgTh}, $B$ is the $\V$-functor $[-,BI]:\J^\op \rightarrow \C$ induced by the cotensors $BJ = [J,BI]$ $(J \in \ob\J)$.  Hence if we let $\mathbb{I}$ denote the unit $\V$-category and let $\iota:\mathbb{I} \rightarrow \J^\op$ denote the $\V$-functor determined by the object $I$ of $\J^\op$, then the identity transformation $B\iota \Rightarrow B\iota$ presents $B$ as a right Kan extension of $B\iota$ along $\iota$.  By \cite[Thm. 4.38]{Ke:Ba} we therefore have an isomorphism
$$\varphi\;:\;[\J^\op,\C](A,B) \xrightarrow{\sim} [\mathbb{I},\C](A\iota,B\iota) = \C(AI,BI),$$
given by evaluation at $I$, and in particular, the object of $\V$-natural transformations $[\J^\op,\C](A,B)$ exists in $\V$.

The cotensors $AJ = [J,AI]$ and $BJ = [J,BI]$ $(J \in \ob\J)$ induce an extraordinarily $\V$-natural family as in \eqref{eq:obj_jop_homs}, and the induced morphism $\lambda^\sharp:\C(AI,BI) \rightarrow [\J^\op,\C](A,B)$ is a section of $\varphi$, so $\lambda^\sharp = \varphi^{-1}$.
\end{proof}

\begin{CorSub}
The $\V$-category $\Alg{\J^\op}_\C$ always exists.  If $\C$ has standard designated $\J$-cotensors, then $\Alg{\J^\op}^!_\C$ is isomorphic to $\C$, which is therefore equivalent to $\Alg{\J^\op}_\C$. 
\end{CorSub}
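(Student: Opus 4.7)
The plan is to derive both assertions directly from Proposition~\bref{thm:obj_of_jop_homs}. For the first assertion, recall from \bref{par:vcat_talgs} that the $\V$-category $\Alg{\J^\op}_\C$ exists as soon as, for each pair of $\J^\op$-algebras $A, B : \J^\op \to \C$, the end $\int_{J \in \J^\op} \C(AJ, BJ)$ exists in $\V$. But \bref{thm:obj_of_jop_homs} exhibits precisely such an end, identifying it with $\C(\ca{A}, \ca{B})$ via the extraordinarily $\V$-natural family $\lambda^{AB}$. Hence $\Alg{\J^\op}_\C$ is a $\V$-category without further hypotheses on $\V$ or $\C$.

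Now assume $\C$ has standard designated $\J$-cotensors. The plan is to show that the evaluation-at-$I$ functor $\ca{-} = \Ev_I : \Alg{\J^\op}^!_\C \to \C$ from \bref{par:vcat_talgs} is an isomorphism of $\V$-categories. For bijectivity on objects, observe that any normal $\J^\op$-algebra $A$ strictly preserves the designated cotensors $[J, I] = J$ of $I$ in $\J^\op$ \pbref{par:lrdes_cots}, so $AJ = [J, \ca{A}]$ for all $J \in \ob\J$; by \cite[5.8]{Lu:EnrAlgTh} such a $\V$-functor is completely determined by its carrier, giving injectivity, while every object $C$ of $\C$ is the carrier of the normal $\J^\op$-algebra $[-, C] : \J^\op \to \C$ induced from the standard designated cotensors $[J, C]$ via \cite[\S 1.10]{Ke:Ba}, giving surjectivity. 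For full faithfulness, \bref{thm:obj_of_jop_homs} yields $\Alg{\J^\op}^!_\C(A, B) = \C(\ca{A}, \ca{B})$, and inspection of the cited proof confirms that the comparison isomorphism is exactly $\Ev_I$ (the Kan extension isomorphism $\varphi$ there is evaluation at $I$).

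Thus $\ca{-}: \Alg{\J^\op}^!_\C \to \C$ is an isomorphism of $\V$-categories, and the resulting equivalence $\Alg{\J^\op}_\C \simeq \C$ follows by composing with the equivalence $\Alg{\J^\op}^!_\C \simeq \Alg{\J^\op}_\C$ cited in \bref{par:vcat_talgs}. I anticipate no serious obstacle here: both claims reduce to formal consequences of the preceding proposition, together with the observation that a normal $\J^\op$-algebra with carrier $C$ is nothing other than the designated-cotensor $\V$-functor $[-, C]$.
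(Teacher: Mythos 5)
Your proposal is correct and follows essentially the same route as the paper: the paper's proof likewise combines the preceding Proposition (which supplies the hom-objects as $\C(\ca{A},\ca{B})$ via evaluation at $I$) with the fact, cited from \cite[5.7]{Lu:EnrAlgTh}, that the carrier assignment is a bijection between normal $\J^\op$-algebras and objects of $\C$. You merely spell out that bijection and the full faithfulness of $\Ev_I$ in more detail than the paper does, which is harmless.
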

\begin{proof}
It follows immediately from \cite[5.7]{Lu:EnrAlgTh} that the assignment to each normal $\J^\op$-algebra $A$ its carrier $\ca{A}$ is a bijection between normal $\J^\op$-algebras and objects of $\C$.  Hence the result follows from the preceding Proposition.
\end{proof}

\begin{RemSub}
Despite the equivalence $\Alg{\J^\op}_\C \simeq \C$, there is a useful distinction to be made between $\J^\op$-algebras and mere objects of $\C$.  Indeed, by \cite[5.7]{Lu:EnrAlgTh}, a $\J^\op$-algebra is precisely an object $C$ of $\C$ together with a choice of standard designated $\J$-cotensors $[J,C]$ $(J \in \ob\J)$.

In particular, every $\T$-algebra $A:\T \rightarrow \C$ comes equipped with a choice of $\J$-cotensors for its carrier $\ca{A}$, and this information is encapsulated by the associated $\J^\op$-algebra
$$A \circ \tau = \left(\J^\op \xrightarrow{\tau} \T \xrightarrow{A} \C\right).$$
\end{RemSub}

\begin{DefSub}\label{def:fam_t_homs}
Let $A,B:\T \rightarrow \C$ be $\T$-algebras, and let $f:V \rightarrow \C(\ca{A},\ca{B})$ be a morphism in $\V$.  By \bref{thm:obj_of_jop_homs}, $\C(\ca{A},\ca{B})$ is an end $\int_{J \in \J^\op}\C((A \circ \tau)J,(B \circ \tau)J)$, so $f$ determines a corresponding family of morphisms
$$f_J:V \rightarrow \C((A\circ\tau)J,(B\circ\tau)J) = \C(AJ,BJ)\;,$$
$\V$-natural in $J \in \J^\op$.  We say that $f$ \textbf{is valued in} $\T$-\textbf{homomorphisms} from $A$ to $B$ if the latter family is $\V$-natural in $J \in \T$, i.e. if $(f_J:V \rightarrow \C(AJ,BJ))$ is an extraordinarily $\V$-natural family for the $\V$-functor
$$\C(A-,B-):\T^\op \otimes \T \rightarrow \uV\;.$$
In the special case where $V = I$, we say that a morphism $f:\ca{A} \rightarrow \ca{B}$ in $\C_0$ is a $\T$\nolinebreak\mbox{-}\nolinebreak\textbf{homomorphism} from $A$ to $B$ if $f:I \rightarrow \C(\ca{A},\ca{B})$ is valued in $\T$-homomorphisms, equivalently, if the corresponding family $(f_J:I \rightarrow \C(AJ,BJ))$ is a $\T$-homomorphism $A \Rightarrow B$ in the sense of \bref{par:vcat_talgs}.
\end{DefSub}

\begin{ParSub}\label{par:charns_t_homs}
Let $A,B$ and $f:V \rightarrow \C(\ca{A},\ca{B})$ be as in the preceding Definition.  By definition, $f$ is valued in $\T$-homomorphisms iff the diagram
\begin{equation}\label{eq:diag_fam_thoms}
\xymatrix{
\T(J,K) \ar[d]_{\C(A-,BK)_{KJ}} \ar[r]^(.4){\C(AJ,B-)_{JK}} & \uV(\C(AJ,BJ),\C(AJ,BK)) \ar[d]^{\uV(f_J,1)}\\
\uV(\C(AK,BK),\C(AJ,BK)) \ar[r]_(.6){\uV(f_K,1)} & \uV(V,\C(AJ,BK))
}
\end{equation}
commutes for all $J,K \in \ob\T = \ob\J$.  Defining
$$\phi_{JK} := \left(\T(J,K)\otimes\C(\ca{A},\ca{B}) \xrightarrow{A_{JK}\otimes\lambda^{AB}_K} \C(AJ,AK)\otimes\C(AK,BK) \xrightarrow{c} \C(AJ,BK)\right),$$
$$\psi_{JK} := \left(\T(J,K)\otimes\C(\ca{A},\ca{B})\xrightarrow{B_{JK}\otimes\lambda^{AB}_J} \C(BJ,BK)\otimes\C(AJ,BJ) \xrightarrow{c} \C(AJ,BK)\right),$$
where $c$ denotes the relevant composition morphism, we find that $f$ is valued in $\T$-homomorphisms if and only if the diagram
\begin{equation}\label{eq:diag_fam_thoms_ii}
\xymatrix{
\T(J,K)\otimes V \ar[d]_{1\otimes f} \ar[r]^(.4){1\otimes f} & \T(J,K)\otimes\C(\ca{A},\ca{B}) \ar[d]^{\psi_{JK}}\\
\T(J,K)\otimes\C(\ca{A},\ca{B}) \ar[r]_(.6){\phi_{JK}} & \C(AJ,BK)
}
\end{equation}
commutes for every pair of objects $J,K \in \ob\J$, since the two composites in this diagram are exactly the transposes of the two composites in \eqref{eq:diag_fam_thoms}.  Transposing once again, we therefore obtain the following: 
\end{ParSub}

\begin{PropSub}\label{thm:lem_charn_fam_thoms}
Let $f:V \rightarrow \C(\ca{A},\ca{B})$ be as in \bref{def:fam_t_homs}, and for all $J,K \in \ob\J$, let
\begin{equation}\label{eq:pairs_specifying_obj_thoms}\Phi_{JK},\Psi_{JK}:\C(\ca{A},\ca{B}) \rightarrow \uV(\T(J,K),\C(AJ,BK))\end{equation}
be the transposes of the morphisms $\phi_{JK},\psi_{JK}$ of \bref{par:charns_t_homs}.  Then $f$ is valued in $\T$-homo\-morphisms iff the following equations hold:
\begin{equation}\label{eq:thoms_eqns}\Phi_{JK} \cdot f = \Psi_{JK} \cdot f\;\;\;\;\;\;\;\;(J,K \in \ob\J).\end{equation}
\end{PropSub}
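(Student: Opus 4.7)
The plan is to observe that nearly all of the substantive work has already been carried out in \bref{par:charns_t_homs}: the preceding paragraph has already shown that $f$ is valued in $\T$-homomorphisms if and only if the diagram \eqref{eq:diag_fam_thoms_ii} commutes for every pair $J,K \in \ob\J$. So what remains is a single transposition step: I need to show that \eqref{eq:diag_fam_thoms_ii} commutes if and only if $\Phi_{JK}\cdot f = \Psi_{JK}\cdot f$.

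To carry this out, I would apply the tensor--hom adjunction in $\V$ to the two parallel composites in \eqref{eq:diag_fam_thoms_ii}, namely $\phi_{JK}\cdot(1_{\T(J,K)}\otimes f)$ and $\psi_{JK}\cdot(1_{\T(J,K)}\otimes f)$, both of type $\T(J,K)\otimes V \to \C(AJ,BK)$. The general formula is that if $g:X\otimes W \to Y$ has transpose $\tilde g:W \to \uV(X,Y)$ under the adjunction $(-)\otimes X \dashv \uV(X,-)$, then the transpose of $g\cdot(1_X\otimes f):X\otimes V \to Y$ is simply $\tilde g\cdot f:V \to \uV(X,Y)$. Applied to $X = \T(J,K)$, $W = \C(\ca{A},\ca{B})$, and $g = \phi_{JK}$ or $\psi_{JK}$, this yields transposes $\Phi_{JK}\cdot f$ and $\Psi_{JK}\cdot f$ respectively, since by definition $\Phi_{JK}$ and $\Psi_{JK}$ are the transposes of $\phi_{JK}$ and $\psi_{JK}$.

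Since transposition across an adjunction is a bijection on morphisms, commutativity of \eqref{eq:diag_fam_thoms_ii} (i.e.\ equality of the two composites) is equivalent to equality of their transposes, i.e.\ to the equation \eqref{eq:thoms_eqns}. Quantifying over all $J,K\in\ob\J$ and combining with the equivalence from \bref{par:charns_t_homs} gives the Proposition. There is no real obstacle here; the content of the Proposition is essentially a repackaging of the conclusion of \bref{par:charns_t_homs} in a form better suited to the applications that follow, so the only step requiring attention is keeping track of which factor of the tensor is being transposed and confirming that the transposes coincide with the designated $\Phi_{JK}$ and $\Psi_{JK}$.
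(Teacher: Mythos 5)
Your argument is correct and is exactly the paper's: the Proposition is stated there as an immediate consequence of \bref{par:charns_t_homs} (``Transposing once again, we therefore obtain the following''), and your careful verification that the transpose of $\phi_{JK}\cdot(1\otimes f)$ is $\Phi_{JK}\cdot f$ via naturality of the tensor--hom adjunction, together with bijectivity of transposition, is precisely the intended step.
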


It now follows that the object of $\T$-homomorphisms can be equivalently characterized as a certain pairwise equalizer \pbref{par:str_mono} in $\V$, as follows:

\begin{ThmSub}\label{thm:existence_of_vcat_of_talgs}\emptybox
\begin{enumerate}
\item If $\V$ has equalizers and wide intersections of arbitrary (class-indexed) families of strong subobjects, then the $\V$-category of $\T$-algebras $\Alg{\T}_\C$ exists for every $\V$-category $\C$.
\item Given $\T$-algebras $A,B:\T \rightarrow \C$, the object of $\T$-homomorphisms from $A$ to $B$ is equivalently defined as a pairwise equalizer of the family of parallel pairs \eqref{eq:pairs_specifying_obj_thoms}, i.e., a strong subobject
$$\Alg{\T}_\C(A,B) \hookrightarrow \C(\ca{A},\ca{B})$$
characterized by the property that an arbitrary morphism $f:V \rightarrow \C(\ca{A},\ca{B})$ factors through this subobject iff $f$ is valued in $\T$-homomorphisms \pbref{def:fam_t_homs}.
\end{enumerate}
\end{ThmSub}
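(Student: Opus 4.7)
The plan is to deduce both parts from Propositions~\ref{thm:obj_of_jop_homs} and~\ref{thm:lem_charn_fam_thoms}, with part~2 being the conceptual heart and part~1 then being a matter of assembling suitable limits in $\V$. The key observation is that these two earlier results together reduce the $\V$-natural end $\int_{J\in\T}\C(AJ,BJ)$ to a pairwise equalizer inside $\C(\ca{A},\ca{B})$, since the extraordinary $\V$-naturality in $J\in\J^\op$ has already been absorbed into $\C(\ca{A},\ca{B})$ itself.

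I would first tackle part~2. By Proposition~\ref{thm:obj_of_jop_homs}, the morphisms $\lambda^{AB}_J = [J,-]_{|A||B|}$ present $\C(\ca{A},\ca{B})$ as the object of $\J^\op$-homomorphisms from $A\circ\tau$ to $B\circ\tau$, so morphisms $f:V\to\C(\ca{A},\ca{B})$ correspond bijectively to families $(f_J = \lambda^{AB}_J\cdot f)_{J\in\ob\J}$ that are extraordinarily $\V$-natural in $J\in\J^\op$. By Definition~\ref{def:fam_t_homs}, such an $f$ is valued in $\T$-homomorphisms iff this family is furthermore extraordinarily $\V$-natural in $J\in\T$, which by Proposition~\ref{thm:lem_charn_fam_thoms} is equivalent to the system of equations $\Phi_{JK}\cdot f = \Psi_{JK}\cdot f$ $(J,K\in\ob\J)$. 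Hence a pairwise equalizer $e:E\hookrightarrow\C(\ca{A},\ca{B})$ of the family $(\Phi_{JK},\Psi_{JK})_{J,K}$, whenever it exists, has exactly the universal property required of the end $\Alg{\T}_\C(A,B) = \int_{J\in\T}\C(AJ,BJ)$; and by~\ref{par:str_mono} any such $e$ is automatically a strong monomorphism, establishing part~2.

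For part~1, I would construct the required pairwise equalizer from the assumed limits in $\V$. For each ordered pair $(J,K)\in\ob\J\times\ob\J$ the equalizer $e_{JK}:E_{JK}\hookrightarrow\C(\ca{A},\ca{B})$ of $\Phi_{JK}$ and $\Psi_{JK}$ exists and, by \cite[3.1]{Ke:MonoEpiPb} as recalled in~\ref{par:str_mono}, is a strong monomorphism; the wide intersection of the class-indexed family $(e_{JK})_{J,K}$, which exists by hypothesis, is precisely the pairwise equalizer \pbref{par:str_mono}, so part~2 supplies the hom-object $\Alg{\T}_\C(A,B)$. Composition and identities for $\Alg{\T}_\C$ are then induced from those of $\C$ in the standard way via the universal property of ends. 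The only delicate step is the translation, via Propositions~\ref{thm:obj_of_jop_homs} and~\ref{thm:lem_charn_fam_thoms}, of the cowedge condition defining the end into equations between explicit pairs of morphisms out of $\C(\ca{A},\ca{B})$; once that translation is in hand, the remainder is a routine combination of equalizers and wide intersections in $\V$, and I anticipate no further obstacle.
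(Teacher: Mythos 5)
Your proposal is correct and follows essentially the same route as the paper: part 2 is obtained by combining \ref{thm:obj_of_jop_homs}, \ref{def:fam_t_homs} and \ref{thm:lem_charn_fam_thoms} to identify the end $\int_{J\in\T}\C(AJ,BJ)$ with the pairwise equalizer of the pairs $(\Phi_{JK},\Psi_{JK})$, and part 1 then follows from the remarks in \ref{par:str_mono} by building that pairwise equalizer as a wide intersection of the (strong mono) equalizers of the individual pairs.
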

\begin{proof}
Let us prove 2, as 1 then follows by the remarks in \bref{par:str_mono}.  By \bref{def:fam_t_homs} and \bref{thm:lem_charn_fam_thoms}, $\V$-natural families $f_J:V \rightarrow \C(AJ,BJ)$ $(J \in \T)$ are in bijective correspondence with morphisms $f:V \rightarrow \C(\ca{A},\ca{B})$ that satisfy the equations \eqref{eq:thoms_eqns}, where $f = f_I$ under this bijection.  With reference to the definition of ends in $\uV$ given in \cite[\S 2.1]{Ke:Ba}, the result follows.
\end{proof}

\begin{RemSub}\label{rem:carrier_functor_str_ff}
When $\Alg{\T}_\C$ exists, the $\V$-functor $\ca{\text{$-$}} = \Ev_I:\Alg{\T}_\C \rightarrow \C$ is strongly faithful \pbref{par:faithful} since its structure morphisms are exactly the strong monomorphisms $\ca{\text{$-$}}_{AB}:\Alg{\T}_\C(A,B) \hookrightarrow \C(\ca{A},\ca{B})$ of \bref{thm:existence_of_vcat_of_talgs}.
\end{RemSub}

It will be convenient to introduce the following terminology for the sequel:

\begin{DefSub}\label{def:pres_ops}
Given $\T$-algebras $A,B:\T \rightarrow \C$, a morphism $f:V \rightarrow \T(\ca{A},\ca{B})$ in $\V$, and objects $J,K$ of $\J$, we say that $f$ \textbf{preserves} $\T$-\textbf{operations of input arity $J$ and output arity $K$} if the following equivalent conditions are satisfied: (i) Equation \eqref{eq:thoms_eqns} holds; (ii) the diagram \eqref{eq:diag_fam_thoms_ii} commutes; (iii) the diagram \eqref{eq:diag_fam_thoms} commutes.
\end{DefSub}

Note that $f$ is valued in $\T$-homomorphisms iff $f$ preserves $\T$-operations of every input arity $J$ and every output arity $K$.  The following shows that we can fix $K = I$ and still obtain an equivalent condition:

\begin{PropSub}\label{thm:valued_in_thoms_quantifying_over_just_obj}
Let $A,B:\T \rightarrow \C$ be $\T$-algebras, and let $f:V \rightarrow \C(\ca{A},\ca{B})$.  Then $f$ is valued in $\T$-homomorphisms if and only if the following equations hold:
$$\Phi_{JI} \cdot f = \Psi_{JI} \cdot f\;\;\;\;\;\;\;\;(J \in \ob\J).$$
\end{PropSub}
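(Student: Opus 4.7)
The ``only if'' direction follows at once from \bref{thm:lem_charn_fam_thoms}. For the converse, fix $J,K\in\ob\J$; by transposing, it suffices to show that $\phi_{JK}\cdot(1\otimes f)=\psi_{JK}\cdot(1\otimes f):\T(J,K)\otimes V\to\C(AJ,BK)$. The plan rests on exploiting the fact that $K$ serves as the standard designated cotensor $[K,I]$ of $I$ in $\T$, with counit $\gamma_K^I:K\to\T(K,I)$ \pbref{par:des_jcot}, and that the $\T$-algebras $A$ and $B$ preserve this cotensor so that $AK$ and $BK$ are the designated cotensors $[K,\ca{A}]$ and $[K,\ca{B}]$ in $\C$, with counits obtained by applying $A_{KI}$ and $B_{KI}$ to $\gamma_K^I$, respectively.

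First I would transpose $\phi_{JK}$ and $\psi_{JK}$ under the cotensor isomorphism $\C(AJ,BK)\cong\uV(K,\C(AJ,\ca{B}))$ induced by the counit of the cotensor $BK=[K,\ca{B}]$, yielding morphisms $\tilde\phi_{JK},\tilde\psi_{JK}:K\otimes\T(J,K)\otimes\C(\ca{A},\ca{B})\to\C(AJ,\ca{B})$. The problem then reduces to proving equality of these transposes after precomposition with $1\otimes 1\otimes f$. Let $\epsilon_K:K\otimes\T(J,K)\to\T(J,I)$ be the composite of $\gamma_K^I\otimes 1_{\T(J,K)}$ with composition in $\T$. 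I would then establish the two key identities
$$\tilde\phi_{JK}=\phi_{JI}\cdot(\epsilon_K\otimes 1_{\C(\ca{A},\ca{B})}),\qquad \tilde\psi_{JK}=\psi_{JI}\cdot(\epsilon_K\otimes 1_{\C(\ca{A},\ca{B})}).$$
Granted these, precomposing the transposed hypothesis $\phi_{JI}\cdot(1\otimes f)=\psi_{JI}\cdot(1\otimes f)$ with $\epsilon_K\otimes 1_V$ yields exactly the required equation.

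Of the two identities, the one for $\tilde\psi_{JK}$ is the simpler: it reduces to the cotensor-preservation relation for $B$ together with $\V$-functoriality of composition in $\C$, reflecting the pointwise fact that, for a morphism $t:J\to K$ in $\T$ and a generalized element $k:W\to K$, the composite $B(\gamma_K^I\cdot k)\cdot Bt$ equals $B((\gamma_K^I\cdot k)\cdot t)$. The identity for $\tilde\phi_{JK}$ requires, in addition, the cotensor-naturality relation characterizing $[K,g]=\lambda^{AB}_K\cdot g$ for $g:\ca{A}\to\ca{B}$, namely that the composite of $[K,g]$ with the counit of $BK$ equals the composite of $g$ with the counit of $AK$, together with the cotensor-preservation relation for $A$.

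The main obstacle I anticipate is the careful $\V$-natural diagrammatic verification of these two identities: the elementwise reasoning is straightforward, but expressing it cleanly as equalities of morphisms in $\V$ requires meticulous bookkeeping of cotensor counits and compatible use of the $\V$-functoriality of composition and of $A$ and $B$. No new conceptual ingredient beyond the standardness of designated $\J$-cotensors in $\T$ and cotensor-preservation by $\T$-algebras should be required.
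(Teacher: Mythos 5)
Your argument is correct, but it takes a genuinely different route from the paper's. The published proof fixes $J$, observes that the two composites in diagram \pbref{eq:diag_fam_thoms} are $\V$-natural in $K \in \J^\op$, that their domain and codomain $\T(J,\tau-)$ and $\uV(V,\C(AJ,(B\circ\tau)-))$ are $\J^\op$-algebras, and hence that the two families constitute $\J^\op$-homomorphisms; Proposition \bref{thm:obj_of_jop_homs} then makes evaluation at $I$ fully faithful on $\Alg{\J^\op}_{\uV}$, so agreement at $K=I$ forces agreement for all $K$. Your proof instead re-derives, by hand and in this special case, the fact that a $\J^\op$-homomorphism is determined by its component at $I$: you exhibit the $(J,K)$-instance as the $(J,I)$-instance precomposed with the evaluation $\epsilon_K:K\otimes\T(J,K)\to\T(J,I)$, after transposing across the cotensor isomorphism $\C(AJ,BK)\cong\uV(K,\C(AJ,\ca{B}))$. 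Your two key identities are correct as stated: the one for $\tilde\psi_{JK}$ needs only that the counit of $BK=[K,\ca{B}]$ is $B_{KI}\cdot\gamma_K$ (cotensor preservation by $B$) together with $\V$-functoriality of composition, while the one for $\tilde\phi_{JK}$ additionally needs the defining naturality of $\lambda^{AB}_K$ and the standardness fact $\lambda^{AB}_I=\id$; the remaining diagrammatic verification is routine. What the paper's route buys is brevity and reuse of \bref{thm:obj_of_jop_homs} (the Acknowledgement indicates the published proof is a referee-suggested shortening of an argument presumably closer in spirit to yours); what your route buys is a self-contained computation that makes explicit which cotensor counits and naturality squares carry the reduction from arbitrary output arity to output arity $I$.
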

\begin{proof}
Let us fix an object $J$ of $\J$.  It suffices to assume that the diagram \eqref{eq:diag_fam_thoms} commutes for $K = I$ and then show that the same diagram commutes for every $K \in \ob\J$.  For each $K \in \ob\J$, let us denote the lower and upper composites in \eqref{eq:diag_fam_thoms} by
$$p_K,\;q_K\;:\;\T(J,K) \longrightarrow \uV(V,\C(AJ,BK))\;,$$
respectively.  By examining the diagram \eqref{eq:diag_fam_thoms} and recalling from \ref{def:fam_t_homs} that the morphisms $f_K:V \rightarrow \C((A \circ \tau)K,(B \circ \tau)K))$ are extraordinarily $\V$-natural in $K \in \J^\op$, we deduce that the morphisms $p_K$ and $q_K$ are $\V$-natural in $K \in \J^\op$---i.e., they constitute $\V$-natural transformations
$$p,\;q\;:\;\T(J,\tau-) \Longrightarrow \uV(V,\C(AJ,(B \circ \tau)-))\;:\;\J^\op \longrightarrow \uV\;.$$
By assumption $p_I = q_I$, whereas it suffices to show that $p = q$.  But the $\V$-functors $\T(J,\tau-),\;\uV(V,\C(AJ,(B \circ \tau)-)):\J^\op \rightarrow \uV$ are $\J^\op$-algebras in $\uV$, so $p$ and $q$ are $\J^\op$-homomorphisms.   By \ref{thm:obj_of_jop_homs}, we have a fully faithful $\V$-functor $\Ev_I:\Alg{\J^\op}_{\uV} \rightarrow \uV$ that sends both $p$ and $q$ to $p_I = q_I$, so $p = q$.
\end{proof}

Using the preceding Proposition, we obtain the following strengthened variant of Theorem \bref{thm:existence_of_vcat_of_talgs}:

\begin{ThmSub}\label{thm:existence_of_vcat_talgs}\emptybox
\begin{enumerate}
\item If $\V$ has equalizers and intersections of $(\ob\J)$-indexed families of strong subobjects, then the $\V$-category of $\T$-algebras $\Alg{\T}_\C$ exists for every $\V$-category $\C$.
\item Given $\T$-algebras $A,B:\T \rightarrow \C$, the object of $\T$-homomorphisms from $A$ to $B$ is equivalently defined as a pairwise equalizer $\Alg{\T}_\C(A,B) \hookrightarrow \C(\ca{A},\ca{B})$ of the $(\ob\J)$-indexed family of parallel pairs $\Phi_{JI},\Psi_{JI}$ $(J \in \ob\J)$.
\end{enumerate}
\end{ThmSub}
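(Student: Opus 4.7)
The plan is to mimic the proof of Theorem \ref{thm:existence_of_vcat_of_talgs}, with the only substantive change being that we invoke the strengthened characterization of Proposition \ref{thm:valued_in_thoms_quantifying_over_just_obj} in place of \ref{thm:lem_charn_fam_thoms}. In effect, the earlier theorem already set up the framework: the object of $\T$-homomorphisms $\Alg{\T}_\C(A,B)$ is constructed as a subobject of $\C(\ca{A},\ca{B})$ classifying those morphisms $f:V \to \C(\ca{A},\ca{B})$ that are valued in $\T$-homomorphisms. All that changes here is which family of parallel pairs cuts out that subobject.

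I would prove (2) first. Starting from the universal property in Theorem \ref{thm:existence_of_vcat_of_talgs}(2), the subobject $\Alg{\T}_\C(A,B) \hookrightarrow \C(\ca{A},\ca{B})$ is characterized by the property that a morphism $f:V \to \C(\ca{A},\ca{B})$ factors through it iff $f$ is valued in $\T$-homomorphisms. By Proposition \ref{thm:valued_in_thoms_quantifying_over_just_obj}, the latter condition is equivalent to the system of equations $\Phi_{JI} \cdot f = \Psi_{JI} \cdot f$ for all $J \in \ob\J$. This is precisely the universal property of a pairwise equalizer of the $(\ob\J)$-indexed family of parallel pairs $(\Phi_{JI},\Psi_{JI})$, so $\Alg{\T}_\C(A,B) \hookrightarrow \C(\ca{A},\ca{B})$ is such a pairwise equalizer whenever it exists; conversely, any pairwise equalizer of this family satisfies the defining universal property and so serves as $\Alg{\T}_\C(A,B)$.

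To deduce (1), I would use the construction of pairwise equalizers out of ordinary equalizers and wide intersections described in \ref{par:str_mono}: for each $J \in \ob\J$, the equalizer $m_J \hookrightarrow \C(\ca{A},\ca{B})$ of the pair $(\Phi_{JI},\Psi_{JI})$ exists by the assumption that $\V$ has equalizers and is automatically a strong monomorphism; the pairwise equalizer of the whole family is then obtained as a wide intersection of the $(\ob\J)$-indexed family $(m_J)_{J \in \ob\J}$ of strong subobjects, which exists by the second hypothesis on $\V$. Thus $\Alg{\T}_\C(A,B)$ exists for every pair of $\T$-algebras $A,B$, and the $\V$-category $\Alg{\T}_\C$ is obtained with composition and identities inherited (via the strongly faithful carrier functor, cf.\ \ref{rem:carrier_functor_str_ff}) from $\C$.

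There is essentially no obstacle here: all the real work was already carried out in \ref{thm:obj_of_jop_homs}, \ref{thm:lem_charn_fam_thoms}, \ref{thm:valued_in_thoms_quantifying_over_just_obj}, and \ref{thm:existence_of_vcat_of_talgs}. The only point to watch is that passing from a $(\ob\J\times\ob\J)$-indexed family of pairs to an $(\ob\J)$-indexed family requires exactly the restriction from arbitrary wide intersections (needed in \ref{thm:existence_of_vcat_of_talgs}) to intersections of $(\ob\J)$-indexed families, which is the precise smallness improvement asserted in part (1).
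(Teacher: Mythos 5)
Your proposal is correct and follows exactly the route the paper intends: the paper states this theorem without a separate proof, presenting it as the immediate combination of Theorem \ref{thm:existence_of_vcat_of_talgs} with Proposition \ref{thm:valued_in_thoms_quantifying_over_just_obj}, together with the construction of pairwise equalizers from equalizers and wide intersections of strong subobjects described in \ref{par:str_mono}. Your write-up fills in those same steps in the same order, so there is nothing to correct.
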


\section{Commutation and Kronecker products of operations}\label{sec:cmtn}

Let $\T$ be a $\J$-theory for a given system of arities $\J \hookrightarrow \uV$.  For each pair of objects $J,K \in \ob\J = \ob\T$ we have $\V$-functors
\begin{equation}\label{eq:lr_cot_functs}[J,-]_\ell,\;\;[K,-]_r\;\;:\;\;\T \rightarrow \T\end{equation}
that supply the \textit{left cotensors} by $J$ and the \textit{right cotensors} by $K$, respectively \pbref{par:lrdes_cots}.  On objects
$$[J,K]_\ell = J \otimes K = [K,J]_r\;,$$
so we have reason to ask whether the $\V$-functors \eqref{eq:lr_cot_functs} might be the \textit{partial $\V$-functors} \cite[\S 1.4]{Ke:Ba} of a $\V$-functor in two variables
$$\T \otimes \T \rightarrow \T$$
given on objects by $(J,K) \mapsto J \otimes K$.  By \cite[(1.21)]{Ke:Ba}, this is the case if and only if the following composite morphisms are equal
\begin{equation}\label{eq:fst_kp}\T(J,J')\T(K,K') \xrightarrow{[K,-]_r[J',-]_\ell}\T(JK,J'K)\T(J'K,J'K') \xrightarrow{c} \T(JK,J'K')\end{equation}
\begin{equation}\label{eq:snd_kp}\T(J,J')\T(K,K') \xrightarrow{[K',-]_r[J,-]_\ell} \T(JK',J'K')\T(JK,JK') \xrightarrow{c} \T(JK,J'K'),\end{equation}
for all $J,J',K,K'$, where we have written the monoidal product $\otimes$ in $\V$ as juxtaposition and written $c$ to denote the relevant composition morphisms.  This leads us to the following:

\begin{DefSub}\label{def:commutation}\emptybox
\begin{enumerate}
\item For all $J,J',K,K' \in \ob\T = \ob\J$, we define the \textbf{first and second Kronecker products}
$$\kk{JJ'KK'},\;\kt{JJ'KK'}\;\;:\;\;\T(J,J')\otimes\T(K,K') \rightarrow \T(J\otimes K,J'\otimes K')$$
as the composite morphisms \eqref{eq:fst_kp} and \eqref{eq:snd_kp}, respectively.
\item Given morphisms $\mu:V \rightarrow \T(J,J')$ and $\nu:W \rightarrow \T(K,K')$ in $\V$ for objects $J,J',K,K'$ of $\J$, we call the composites
$$\mu * \nu = \left(V \otimes W \xrightarrow{\mu\otimes\nu} \T(J,J')\otimes\T(K,K') \xrightarrow{\kk{JJ'KK'}} \T(J\otimes K,J'\otimes K')\right)$$
$$\mu \stt \nu = \left(V \otimes W \xrightarrow{\mu\otimes\nu} \T(J,J')\otimes\T(K,K') \xrightarrow{\kt{JJ'KK'}} \T(J\otimes K,J'\otimes K')\right)$$
the \textbf{first and second Kronecker products} of $\mu$ and $\nu$, respectively.  When $V = W = I$, so that $\mu$ and $\nu$ are morphisms in the underlying ordinary  category $\T_0$ of $\T$, the first and second Kronecker products of $\mu$ and $\nu$ correspond to evident morphisms $J \otimes K \rightarrow J' \otimes K'$ in $\T_0$, for which we use the same notations $\mu * \nu$ and $\mu \stt \nu$.
\item We write $\mu \perp \nu$ and say that \textbf{$\mu$ commutes with $\nu$} (in $\T$) if
$$\mu * \nu = \mu \stt \nu\;:\;V\otimes W \rightarrow \T(J\otimes K,J'\otimes K'),$$
i.e., if the first and second Kronecker products of $\mu$ and $\nu$ are equal.
\end{enumerate}
\end{DefSub}

\begin{RemSub}\label{rem:diagr_text_compn}
For any triple of objects $A,B,C$ in $\V$-category $\C$ we have composition morphisms $\C(A,B) \otimes \C(B,C) \rightarrow \C(A,C)$ and $\C(B,C) \otimes \C(A,B) \rightarrow \C(A,C)$ that are related to one another by composition with the symmetry in $\V$.  We shall call these the \textbf{diagrammatic} and \textbf{textual} composition morphisms, respectively.  Observe that the first Kronecker product in a $\J$-theory $\T$ involves diagrammatic composition, whereas the second Kronecker product involves textual composition.  The repercussions of this will be evident in Example \bref{exa:mro} and implicit in Example \bref{exa:kp_matr}.
\end{RemSub}

\begin{ExaSub}[\textbf{Kronecker products of operations in Lawvere theories}]
For the system of arities $\FinCard \hookrightarrow \Set$, the Kronecker products defined in \bref{def:commutation} can be characterized in terms of the Kronecker products\footnote{At present, the term \textit{Kronecker product} in this sense does not seem to be in widespread use in the literature on algebraic theories, despite its use in the classical case of matrices \pbref{exa:kp_matr}.  Nevertheless the closely related \textit{tensor product} of theories \cite[\S 13]{Wra:AlgTh} is often called the \textit{Kronecker product of theories}, as distinguished from the above \textit{Kronecker products of operations}.} $\mu * \nu,\mu \stt \nu:j \times k \rightarrow j' \times k'$ of pairs of individual morphisms $\mu:j \rightarrow j'$, $\nu:k \rightarrow k'$ in the Lawvere theory $\T$, for which explicit formulas are given in \cite[\S 4]{Lu:CvxAffCmt}.  Here, the objects $j,j',k,k'$ are finite cardinals, and the product $j \times k$ is the usual product of cardinals $jk$, with chosen product projections in $\FinCard$ \cite[4.1]{Lu:CvxAffCmt}.
\end{ExaSub}

\begin{ExaSub}[\textbf{The Kronecker product of matrices}]\label{exa:kp_matr}
Given a rig $R$, recall that the Lawvere theory of left $R$-modules is the category $\T = \Mat_R$ of $R$-matrices, whose morphisms $j \rightarrow j'$ are $j' \times j$-matrices.  Letting $X \in \Mat_R(j,j') = R^{j'\times j}$ and $Y \in \Mat_R(k,k') = R^{k' \times k}$, the first Kronecker product $X * Y$ is the classical \textbf{Kronecker product} $Y \otimes X$ of the matrices $Y$ and $X$ \cite[4.4]{Lu:CvxAffCmt}, which is a certain $j'k' \times jk$-matrix whose entries are products of entries drawn from $Y$ and $X$.  The second Kronecker product $X \stt Y$ is in general distinct, but coincides with $Y \otimes X$ when $R$ is commutative \cite[4.6]{Lu:CvxAffCmt}.
\end{ExaSub}

\begin{ExaSub}[\textbf{Multiplication in a ring and its opposite}]\label{exa:mro}
Given a monoid $R$ in $\V$ (e.g. a ring if $\V = \Ab$), our convention is to consider $R$ as a one-object $\V$-category $\R$ whose unique \textit{textual} composition morphism \pbref{rem:diagr_text_compn} is the multiplication morphism $m_R:R \otimes R \rightarrow R$ carried by $R$.  $\R$ is then an $\{I\}$-theory \pbref{exa:sys_ar_jth}, and its first Kronecker product $\mathsf{k}$ has exactly one component, namely the \textit{diagrammatic} composition morphism carried by $\R$, i.e. the multiplication morphism $m_{R^\op}:R \otimes R \rightarrow R$ carried by the opposite monoid $R^\op$.  Contrastingly, the unique component of the second Kronecker product $\widetilde{\mathsf{k}}$ for $\R$ is the multiplication morphism $m_R$ carried by $R$ itself.
\end{ExaSub}

We shall employ the following lemma in order to establish a basic relation between the first and second Kronecker products.

\begin{LemSub}
Given objects $J,K,K'$ of $\J$, we have a commutative diagram
$$
\xymatrix{
\T(K,K') \ar[dr]_{[J,-]_r} \ar[r]^(.4){[J,-]_\ell} & \T(J\otimes K,J\otimes K') \ar[d]^{\T(\tau(s_{JK}),\tau(s_{K'J}))}\\
& \T(K\otimes J,K'\otimes J)
}
$$
in which the right side is an isomorphism.
\end{LemSub}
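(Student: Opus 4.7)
The plan is to derive the triangle as a direct application of Lemma~\bref{thm:cot_lem}. I would invoke that lemma with $V = J$, $C_1 = C_2 = K$, $C_3 = C_4 = K'$, $f_1 = 1_K$, $f_3 = 1_{K'}$, using the left cotensor structures $[V,C_1] = J \otimes K$ and $[V,C_3] = J \otimes K'$ on the one hand, and the right cotensor structures $[V,C_2] = K \otimes J$ and $[V,C_4] = K' \otimes J$ on the other, as described in \bref{par:lrdes_cots}(1)--(2). Under these choices the two horizontal maps $[V,-]_{13}$ and $[V,-]_{24}$ of Lemma~\bref{thm:cot_lem} become exactly $[J,-]_\ell$ and $[J,-]_r$, while the left vertical map $\T(f_1^{-1},f_3) = \T(1_K,1_{K'})$ is the identity, so the commutative square produced by the lemma collapses to a commutative triangle of precisely the shape required.

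The key computational step is to identify the induced isomorphisms $[V, 1_K]$ and $[V, 1_{K'}]$ in terms of $\tau$ and the symmetries of $\V$. Using the relation between the right and left cotensor counits recorded in \bref{par:lrdes_cots}(2), namely $\gamma^{K,r}_J = \T(\tau(s_{JK}), 1) \cdot \gamma^{K,\ell}_J$, the Yoneda description of the map induced between two cotensors of the same object forces $\tau(s_{JK}) \cdot [V, 1_K] = 1_{J \otimes K}$ in $\T$, whence $[V, 1_K] = \tau(s_{JK})^{-1} = \tau(s_{KJ}): J \otimes K \to K \otimes J$, and similarly $[V, 1_{K'}] = \tau(s_{K'J})$. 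Substituting into the right-hand vertical map in the conclusion of Lemma~\bref{thm:cot_lem} then yields $\T([V,1_K]^{-1}, [V,1_{K'}]) = \T(\tau(s_{KJ})^{-1}, \tau(s_{K'J})) = \T(\tau(s_{JK}), \tau(s_{K'J}))$, exactly as in the statement, and this map is an isomorphism by the general assertion of Lemma~\bref{thm:cot_lem} (equivalently, because both of its arguments are invertible in $\T$).

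I expect the main obstacle to be the bookkeeping of the direction of the induced isomorphism $[V,1_K]$: a transposition error would swap $\tau(s_{JK})$ with $\tau(s_{KJ})$ in the final answer. This direction is pinned down unambiguously by the counit formula in \bref{par:lrdes_cots}(2) together with the uniqueness clause in the universal property of cotensors, so once the set-up is arranged correctly the rest of the argument is a routine unwinding.
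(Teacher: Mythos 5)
Your proposal is correct and follows essentially the same route as the paper: the paper invokes Corollary \bref{thm:cots_of_same_obj} (which is exactly the specialization of Lemma \bref{thm:cot_lem} with $C_1=C_2$, $C_3=C_4$ and identity isomorphisms that you perform by hand), and identifies the induced isomorphism $[J,K]_\ell \to [J,K]_r$ as $\tau(s_{KJ})$, so that $\T(\tau(s_{KJ})^{-1},\tau(s_{K'J})) = \T(\tau(s_{JK}),\tau(s_{K'J}))$. Your explicit Yoneda computation of $[V,1_K]=\tau(s_{KJ})$ from the counit relation in \bref{par:lrdes_cots}(2) is exactly the step the paper leaves implicit, and your handling of the direction of the induced isomorphism is right.
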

\begin{proof}
We have cotensors $[J,K]_\ell = J \otimes K$ and $[J,K]_r = K \otimes J$ of $K$ by $J$ in $\T$, and by \bref{par:lrdes_cots} the induced isomorphism $[J,K]_\ell \xrightarrow{\sim} [J,K]_r$ is $\tau(s_{KJ}):J \otimes K \rightarrow K \otimes J$.   Similar remarks apply with $K'$ in place of $K$, so the result follows by \bref{thm:cots_of_same_obj} since $s_{KJ}^{-1} = s_{JK}$.
\end{proof}

The first and second Kronecker products are related in the following way:

\begin{PropSub}\label{thm:reln_betw_fst_and_snd_kr_prods}
Given objects $J,J',K,K'$ of $\J$, we have a commutative square
$$
\xymatrix{
\T(J,J')\otimes\T(K,K') \ar[d]_s \ar[rr]^{\kt{JJ'KK'}} & & \T(J\otimes K, J'\otimes K') \ar[d]^{\T(\tau(s_{JK}),\tau(s_{K'J'}))}\\
\T(K,K')\otimes\T(J,J') \ar[rr]_{\kk{KK'JJ'}} & & \T(K\otimes J,K'\otimes J')
}
$$
whose left and right sides are isomorphisms.  Here $s$ denotes the symmetry isomorphism in $\V$.
\end{PropSub}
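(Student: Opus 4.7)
The plan is to expand both composites in the square by unfolding the definitions of $\kk{KK'JJ'}$ and $\kt{JJ'KK'}$ from \bref{def:commutation}, then to apply the preceding lemma to rewrite each occurrence of a right-cotensor morphism $[-,-]_r$ via its corresponding left-cotensor morphism $[-,-]_\ell$ together with conjugation by symmetries $\tau(s)$, and finally to observe a single cancellation that identifies the two resulting expressions.

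Concretely, for generalized elements $\mu:V \to \T(J,J')$ and $\nu:W \to \T(K,K')$, let us abbreviate $J\nu := [J,-]_\ell(\nu)$, $K'\mu := [K',-]_\ell(\mu)$, $\nu J := [J,-]_r(\nu)$, and $\mu K' := [K',-]_r(\mu)$. Unfolding \bref{def:commutation}---and noting that the morphism $c$ appearing in \eqref{eq:snd_kp} is the textual composition morphism of $\T$ while the one in \eqref{eq:fst_kp} is diagrammatic---the top-right composite of the square carries $(\mu,\nu)$ to
$$\tau(s_{K'J'}) \circ (\mu K') \circ (J\nu) \circ \tau(s_{JK}),$$
while the bottom-left composite carries $(\mu,\nu)$ to
$$(K'\mu) \circ (\nu J).$$
The preceding lemma supplies the identities $\mu K' = \tau(s_{J'K'}) \circ (K'\mu) \circ \tau(s_{K'J})$ and $\nu J = \tau(s_{K'J}) \circ (J\nu) \circ \tau(s_{JK})$. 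Substituting the former into the top-right expression and the latter into the bottom-left expression, the required equality reduces to the single cancellation
$$\tau(s_{K'J'}) \circ \tau(s_{J'K'}) = \id_{K'J'},$$
which holds because $\tau:\J^\op \to \T$ is a $\V$-functor and the symmetries $s_{J'K'}, s_{K'J'}$ are mutually inverse isomorphisms in $\V$, hence in $\J$.

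The remaining assertion that the two vertical sides of the square are isomorphisms is immediate: $s$ is the symmetry in $\V$ and $\T(\tau(s_{JK}),\tau(s_{K'J'}))$ is the hom-$\V$-functor of $\T$ evaluated at a pair of isomorphisms (namely the images under $\tau$ of the symmetries $s_{JK},s_{K'J'}$ in $\J$). The principal obstacle in carrying out the proof is purely notational: tracking the several symmetry morphisms $\tau(s)$ in parallel and carefully distinguishing diagrammatic from textual composition in the relevant instances of $c$; once this bookkeeping is properly in place, the argument reduces to the displayed cancellation.
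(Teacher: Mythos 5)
Your proof is correct and takes essentially the same route as the paper, whose entire argument is the one-line instruction to apply the definitions of $\kk{}$ and $\kt{}$ together with the preceding Lemma; you have simply carried out that computation explicitly, with the right bookkeeping of diagrammatic versus textual composition and the correct two instantiations of the Lemma. The final cancellation $\tau(s_{K'J'})\circ\tau(s_{J'K'})=\id$ is exactly the point, and your justification of it via the functoriality of $\tau$ and the symmetry axiom is sound.
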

\begin{proof}
Apply the definitions of $\kk{}$ and $\kt{}$, together with the preceding Lemma.
\end{proof}

\begin{PropSub}\label{thm:cmtn_symm}
The commutation relation $\bot$ is symmetric.  I.e.,
$$\mu \bot \nu \;\;\;\;\Longleftrightarrow\;\;\;\; \nu \bot \mu\;.$$
\end{PropSub}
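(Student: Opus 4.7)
The plan is to reduce everything to Proposition \ref{thm:reln_betw_fst_and_snd_kr_prods}, applied twice. First I would precompose the commutative square of that Proposition with the morphism $\mu \otimes \nu : V \otimes W \to \T(J,J') \otimes \T(K,K')$, and use the $\V$-naturality of the symmetry in the form $s_{\T(J,J'),\T(K,K')} \cdot (\mu \otimes \nu) = (\nu \otimes \mu) \cdot s_{V,W}$ to rewrite the bottom composite; unfolding the definitions of $\mu \stt \nu$ and $\nu * \mu$, this yields
$$T \cdot (\mu \stt \nu) \;=\; (\nu * \mu) \cdot s_{V,W}, \qquad T := \T(\tau(s_{JK}),\tau(s_{K'J'})).$$

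Next I would apply the same Proposition with the roles of $(J,J')$ and $(K,K')$ interchanged, and precompose with $\nu \otimes \mu : W \otimes V \to \T(K,K') \otimes \T(J,J')$; an identical argument then produces
$$T' \cdot (\nu \stt \mu) \;=\; (\mu * \nu) \cdot s_{W,V}, \qquad T' := \T(\tau(s_{KJ}),\tau(s_{J'K'})).$$
The key observation is that $T' = T^{-1}$, which follows from the fact that $\tau$ is functorial and sends the mutually inverse symmetries $s_{JK},s_{KJ}$ in $\J$ (and likewise $s_{K'J'},s_{J'K'}$) to mutually inverse morphisms in $\T$. Since also $s_{W,V} = s_{V,W}^{-1}$, the two identities above may be rewritten as
$$\nu * \mu \;=\; T \cdot (\mu \stt \nu) \cdot s_{W,V}, \qquad \nu \stt \mu \;=\; T \cdot (\mu * \nu) \cdot s_{W,V}.$$

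The symmetry of $\perp$ is then immediate: if $\mu * \nu = \mu \stt \nu$, applying the invertible operation $f \mapsto T \cdot f \cdot s_{W,V}$ to both sides delivers $\nu \stt \mu = \nu * \mu$; the converse direction is obtained by swapping $\mu$ and $\nu$. The only part requiring care is the bookkeeping of domains and codomains in the presence of the direction-reversing functor $\tau:\J^\op \to \T$ — in particular verifying, via the contravariance of $\T(-,-)$ in its first argument, that $T$ and $T'$ really are mutually inverse — but this is routine and no substantive obstacle arises.
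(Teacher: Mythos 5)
Your proof is correct and follows essentially the same route as the paper: two applications of Proposition \ref{thm:reln_betw_fst_and_snd_kr_prods} together with the observation that the resulting conjugating isomorphisms (your $T$, $T'$ and the symmetries $s_{V,W}$, $s_{W,V}$) are mutually inverse, so that the two Kronecker products of $\nu$ and $\mu$ are obtained from those of $\mu$ and $\nu$ by one and the same invertible operation. The paper states this more tersely (``the composite isomorphism is an identity in the arrow category''), but the content is identical.
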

\begin{proof}
With $\mu$ and $\nu$ as in \bref{def:commutation}, suppose that $\mu \bot \nu$.  Then $\mu * \nu = \mu \stt \nu:V\otimes W \rightarrow \T(J\otimes K,J'\otimes K')$.  Two separate applications of \bref{thm:reln_betw_fst_and_snd_kr_prods} show not only that
$$\nu \stt \mu \;\cong\; \mu * \nu \;=\; \mu \stt \nu \;\cong\; \nu * \mu$$
in the arrow category of $\V$, but moreover that in fact the composite isomorphism is an identity $\nu \stt \mu = \nu * \mu$.
\end{proof}

\begin{DefSub}
A $\J$-theory $\T$ is \textbf{commutative} if its first and second Kronecker products are equal, i.e., if $\kk{JJ'KK'} = \kt{JJ'KK'}$ for all objects $J,J',K,K'$ of $\J$.  Equivalently, $\T$ is commutative iff $\mu$ commutes with $\nu$ for all objects $J,J',K,K'$ and all morphisms $\mu:V \rightarrow \T(J,J')$ and $\nu:W \rightarrow \T(K,K')$ in $\V$.  Indeed, note that $\kk{JJ'KK'} = 1_{\T(J,J')} * 1_{\T(K,K')}$ and $\kt{JJ'KK'} = 1_{\T(J,J')} \stt 1_{\T(K,K')}$.
\end{DefSub}

\begin{ExaSub}[\textbf{$R$-modules and commutativity}]\label{exa:second_kp_matr}
The Lawvere theory $\Mat_R$ of left $R$-modules for a rig $R$ is commutative if and only if $R$ is commutative \cite[4.6]{Lu:CvxAffCmt}.  In particular, the Lawvere theory $\Mat_2$ of semilattices \pbref{exa:slats} is commutative.
\end{ExaSub}

\begin{ExaSub}[\textbf{Commutative rings as commutative} $\{\ZZ\}$-\textbf{theories}]\label{exa:crings_comm_zth}
By \bref{exa:mro}, commutative monoids $R$ in $\V$ are the same as commutative $\{I\}$-theories.  In particular, commutative rings are the same as commutative $\{\ZZ\}$-theories when $\V = \Ab$.
\end{ExaSub}

\begin{DefSub}[\textbf{Commutation of morphisms of theories}]
A pair of morphisms of $\J$-theories $A:\T \rightarrow \U$ and $B:\sS \rightarrow \U$ is said to \textbf{commute} if the associated morphisms
$$A_{JJ'}:\T(J,J') \rightarrow \U(J,J')\;\;\;\;\;\;B_{KK'}:\sS(K,K') \rightarrow \U(K,K')$$
commute in $\U$ for all objects $J,J',K,K'$ of $\J$.
\end{DefSub}

\begin{RemSub}
Observe that a $\J$-theory $\T$ is commutative iff the identity morphism $1_\T$ commutes with itself.
\end{RemSub}

\begin{ExaSub}[\textbf{Commutation of ring homomorphisms}]\label{exa:cmtn_ring_hom}
Let $a:R \rightarrow U$ and $b:S \rightarrow U$ be morphisms of monoids in $\V$, with corresponding morphisms of $\{I\}$-theories $A:\R \rightarrow \U$ and $B:\sS \rightarrow \U$.  Then $A$ commutes with $B$ if and only if $m_U \cdot (a \otimes b) = m_{U^\op} \cdot (a \otimes b):R \otimes S \rightarrow U$ in the notation of \bref{exa:mro}.  In particular, when $\V = \Ab$, the homomorphisms of rings $a$ and $b$ commute in this sense if and only if $a(r)b(s) = b(s)a(r)$ in $U$ for all $r \in R$ and $s \in S$.
\end{ExaSub}

\begin{PropSub}\label{thm:basic_props_of_cmmtn_morphs}
Let $P:\sP \rightarrow \T$, $Q:\Q \rightarrow \T$, and $A:\T \rightarrow \U$ be morphisms of $\J$-theories.  Firstly, if $P$ commutes with $Q$, then $AP$ commutes with $AQ$.  Secondly, if $A$ is a subtheory embedding and $AP$ commutes with $AQ$, then $P$ commutes with $Q$.
\end{PropSub}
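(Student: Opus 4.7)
The key step is an intertwining property: every morphism of $\J$-theories $A:\T \to \U$ satisfies
$$A \circ \kk{JJ'KK'}^{\T} \;=\; \kk{JJ'KK'}^{\U} \circ (A \otimes A), \qquad A \circ \kt{JJ'KK'}^{\T} \;=\; \kt{JJ'KK'}^{\U} \circ (A \otimes A)$$
for all $J,J',K,K' \in \ob\J$. To prove this I would unfold the definitions in \bref{def:commutation}: the first Kronecker product is the composite obtained by applying the right cotensor endofunctor $[K,-]_r^{\T}$ and the left cotensor endofunctor $[J',-]_\ell^{\T}$ to the two tensor factors and then composing diagrammatically. By \bref{rem:morph_pres_lr_jcots}, $A$ strictly preserves both left and right $\J$-cotensors, so $A$ intertwines each of these endofunctors with its counterpart in $\U$; and $\V$-functoriality of $A$ means $A$ preserves composition. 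Pasting these compatibilities together yields the claimed equation for $\kk{}$, and the argument for $\kt{}$ is identical.

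Precomposing the intertwining identities with $\mu \otimes \nu$ for arbitrary generalized elements $\mu:V \to \T(J,J')$ and $\nu:W \to \T(K,K')$ gives $A \circ (\mu * \nu) = (A\mu) * (A\nu)$ and $A \circ (\mu \stt \nu) = (A\mu) \stt (A\nu)$ in $\U$. Applied to $\mu = P_{JJ'}$ and $\nu = Q_{KK'}$, this gives
$$A_{J\otimes K,\,J'\otimes K'} \circ (P_{JJ'} * Q_{KK'}) \;=\; (AP)_{JJ'} * (AQ)_{KK'}$$
together with the corresponding identity for $\stt$. If $P$ commutes with $Q$, then the left-hand sides coincide for the two Kronecker products, so their images under $A_{J\otimes K,\,J'\otimes K'}$ coincide, yielding $AP \perp AQ$. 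Conversely, if $A$ is a subtheory embedding then each component $A_{J\otimes K,\,J'\otimes K'}$ is a monomorphism, so under the hypothesis $AP \perp AQ$ we may cancel this monomorphism from the two displayed identities to recover $P_{JJ'} * Q_{KK'} = P_{JJ'} \stt Q_{KK'}$, proving $P \perp Q$.

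I expect no serious obstacle beyond careful bookkeeping; the only delicate point is that $\kk{}$ is defined using diagrammatic composition while $\kt{}$ uses textual composition (cf.\ \bref{rem:diagr_text_compn}), so $\V$-functoriality of $A$ must be invoked in the appropriate variant for each of the two Kronecker products.
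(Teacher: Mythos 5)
Your proposal is correct and follows essentially the same route as the paper: the intertwining identity $A_{J\otimes K, J'\otimes K'}\circ \kk{JJ'KK'}^{\T} = \kk{JJ'KK'}^{\U}\circ(A_{JJ'}\otimes A_{KK'})$ (and its $\kt{}$ analogue) is established from strict preservation of left and right $\J$-cotensors \pbref{rem:morph_pres_lr_jcots} plus $\V$-functoriality of $A$, exactly as in the paper's commutative-diagram argument, and the two implications then follow by post-composition and by cancelling the monomorphism $A_{J\otimes K,J'\otimes K'}$, respectively.
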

\begin{proof}
For all $J,J',K,K' \in \ob\J$, we have a diagram
$$
\xymatrix@C=16ex @R=5ex{
\T(J,J')\T(K,K') \ar[r]^{A_{JJ'}A_{KK'}} \ar[d]_{[K,-]_r[J',-]_\ell} & \U(J,J')\U(K,K') \ar[d]^{[K,-]_r[J',-]_\ell}\\
\T(JK,J'K)\T(J'K,J'K') \ar[d]_c \ar[r]^{A_{JK,J'K}A_{J'K,J'K'}} & \U(JK,J'K)\U(J'K,J'K') \ar[d]^c\\
\T(JK,J'K') \ar[r]_{A_{JK,J'K'}} & \U(JK,J'K')
}
$$
in which we have written the monoidal product $\otimes$ in $\V$ as juxtaposition.  The upper square commutes by \eqref{eq:mor_com_lcot} and \eqref{eq:mor_com_rcot}, and the lower square commutes by the $\V$-functoriality of $A$.  But the composites on the left and right sides are the first Kronecker products $\kk{JJ'KK'}^\T$ and $\kk{JJ'KK'}^\U$ for $\T$ and $\U$, respectively.  It suffices to show firstly that if
\begin{enumerate}
\item[(i)] $\kk{JJ'KK'}^\T \cdot (P_{JJ'} \otimes Q_{KK'}) = \kt{JJ'KK'}^\T \cdot (P_{JJ'} \otimes Q_{KK'})$
\end{enumerate}
then
\begin{enumerate}
\item[(ii)] $\kk{JJ'KK'}^\U \cdot ((AP)_{JJ'} \otimes (AQ)_{KK'}) = \kt{JJ'KK'}^\U \cdot ((AP)_{JJ'} \otimes (AQ)_{KK'})\;,$
\end{enumerate}
and secondly that (ii) implies (i) when $A$ is a subtheory embedding.  But by the above we now know that the left-hand side of (ii) is
$$\kk{JJ'KK'}^\U \cdot (A_{JJ'} \otimes A_{KK'}) \cdot (P_{JJ'} \otimes Q_{KK'}) = A_{JK,J'K'} \cdot \kk{JJ'KK'}^\T \cdot (P_{JJ'} \otimes Q_{KK'})\;,$$
and we find similarly that the right-hand side of (ii) is
$$\kt{JJ'KK'}^\U \cdot (A_{JJ'} \otimes A_{KK'}) \cdot (P_{JJ'} \otimes Q_{KK'}) = A_{JK,J'K'} \cdot \kt{JJ'KK'}^\T \cdot (P_{JJ'} \otimes Q_{KK'})\;.$$
The result now follows.
\end{proof}

\begin{PropSub}
Any subtheory $\T$ of a commutative $\J$-theory $\U$ is commutative.
\end{PropSub}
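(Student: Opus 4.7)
The plan is to deduce this as an immediate corollary of the second half of Proposition \ref{thm:basic_props_of_cmmtn_morphs}. Let $\iota : \T \hookrightarrow \U$ denote the given subtheory embedding, and recall that $\T$ is commutative precisely when $1_\T : \T \rightarrow \T$ commutes with itself (as a morphism of $\J$-theories). Applying Proposition \ref{thm:basic_props_of_cmmtn_morphs} with $P = Q = 1_\T$ and $A = \iota$, it therefore suffices to show that $\iota \cdot 1_\T = \iota$ commutes with itself.

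For this, we use that $\U$ is commutative, which by definition means that $\kk{JJ'KK'}^\U = \kt{JJ'KK'}^\U$ for all $J,J',K,K' \in \ob\J$. Consequently, for every such 4-tuple of objects we have
$$\iota_{JJ'} * \iota_{KK'} = \kk{JJ'KK'}^\U \cdot (\iota_{JJ'} \otimes \iota_{KK'}) = \kt{JJ'KK'}^\U \cdot (\iota_{JJ'} \otimes \iota_{KK'}) = \iota_{JJ'} \stt \iota_{KK'},$$
so that $\iota$ commutes with $\iota$. Invoking the second statement of Proposition \ref{thm:basic_props_of_cmmtn_morphs}, we conclude that $1_\T$ commutes with $1_\T$, i.e., that $\T$ is commutative. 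There is no real obstacle here: the substance has already been packaged into Proposition \ref{thm:basic_props_of_cmmtn_morphs}, and the present statement is essentially a direct reading of that result in the special case of the identity endomorphisms.
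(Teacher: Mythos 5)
Your proof is correct and follows exactly the same route as the paper's: it applies the second half of Proposition \ref{thm:basic_props_of_cmmtn_morphs} to $P = Q = 1_\T$ and the subtheory embedding, after noting that commutativity of $\U$ forces the embedding to commute with itself. The only difference is that you spell out the latter step with the explicit Kronecker product equation, which the paper leaves as immediate.
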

\begin{proof}
Letting $A:\T \hookrightarrow \U$ be a subtheory embedding, the commutativity of $\U$ immediately entails that $A$ commutes with itself, but since $A = A \circ 1_\T$ and $A$ is a subtheory embedding, it follows from \bref{thm:basic_props_of_cmmtn_morphs} that $1_\T$ commutes with itself.
\end{proof}

\begin{ExaSub}[\textbf{Affine and convex spaces}]\label{exa:commutative_lths}
The Lawvere theory $\Mat_R^\aff$ of $R$-affine spaces \pbref{exa:raff} for a commutative ring or rig $R$ is commutative, as it is a subtheory of the commutative theory $\Mat_R$ of $R$-modules \pbref{exa:second_kp_matr}.  In particular, the theory of $\RR$-convex spaces $\Mat^\aff_{\RR_+}$ \pbref{exa:cvx_sp} is commutative, as is the theory of unbounded join semilattices $\Mat_2^\aff$ \pbref{exa:unb_slat}.
\end{ExaSub}

\section{Commutation via \texorpdfstring{$\T$}{T}-homomorphisms}

In the present section we establish a link between commutation and the notion of $\T$-homomorphism.  The connection between these notions will play a fundamental role in our study of \textit{commutants} in subsequent sections.  We begin with some technical lemmas, as follows.

\begin{LemSub}\label{thm:lrcot_vfuncs_for_jop}
For each object $J$ of $\J$, the $\V$-functors
$$[J,-]_\ell,\;[J,-]_r\;:\;\J^\op \rightarrow \J^\op$$
are simply $J \otimes (-)$ and $(-)\otimes J$, respectively.
\end{LemSub}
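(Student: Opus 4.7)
The plan is to invoke the uniqueness statement for cotensor $\V$-functors recorded in \bref{par:cot}: given an object-level assignment of cotensors together with their counits, there is a \emph{unique} extension to a $\V$-functor such that these counits are $\V$-natural in the base variable. Since $\J \hookrightarrow \uV$ is closed under $\otimes$, the $\V$-functor $J \otimes (-) : \uV \to \uV$ restricts to a $\V$-functor $\J \to \J$, and passing to opposites yields a $\V$-functor $J \otimes (-) : \J^\op \to \J^\op$ whose action on objects, $K \mapsto J \otimes K$, matches that of $[J,-]_\ell$ by \bref{par:lrdes_cots}.

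The task then reduces to verifying that, with respect to this candidate, the counits $\gamma^K_J = \Coev : J \to \J^\op(J \otimes K, K)$ are $\V$-natural in $K \in \J^\op$. Unpacking this condition by means of the identification $\J^\op(A, B) = \uV(B, A)$ for $A, B \in \J$, the naturality square for each $\phi : K \to K'$ in $\J^\op$ (equivalently, $\phi^\op : K' \to K$ in $\uV$) reduces to the equality
$$(J \otimes \phi^\op) \circ \Coev_{K'} \;=\; \Coev_K \circ \phi^\op$$
of morphisms $K' \to J \otimes K$ in $\uV$. This is precisely the dinaturality of the coevaluation $\Coev : J \to \uV(-, J \otimes -)$, reflecting the standard fact that $\Coev$ is the unit of the $\V$-adjunction $(-) \otimes K \dashv \uV(K, -)$, $\V$-naturally in $K$. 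Uniqueness in \bref{par:cot} then forces $[J,-]_\ell = J \otimes (-)$ as $\V$-functors on $\J^\op$.

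The case of the right cotensor $[J,-]_r = (-) \otimes J$ is handled by a symmetric argument, using the counit $\Coev' : J \to \uV(K, K \otimes J)$ from \bref{par:lrdes_cots} in place of $\Coev$. The requisite $\V$-naturality in $K$ can be obtained either by the analogous direct reasoning or by appealing to the factorization $\Coev' = \uV(K, s_{JK}) \cdot \Coev$ from \bref{par:lrdes_cots} together with the $\V$-naturality of the symmetry $s$. In either case, uniqueness in \bref{par:cot} again yields the claim.

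The only point requiring real care is the translation of $\V$-naturality in $K \in \J^\op$ into a condition in $\uV$, via the identification above; the underlying dinaturality of coevaluation is a standard fact about symmetric monoidal closed categories and does not require a separate calculation here, so no substantial obstacle is anticipated.
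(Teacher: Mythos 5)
Your proposal is correct and follows essentially the same route as the paper's proof: both identify $[J,-]_\ell$ via the uniqueness clause of \bref{par:cot} after observing that the counits $\gamma^K_J = \Coev$ are (extraordinarily) $\V$-natural in $K \in \J^\op$ with respect to $J \otimes (-)$, and both dispatch $[J,-]_r$ by a symmetric argument. The paper simply asserts the $\V$-naturality of $\Coev$ where you unpack it as the dinaturality of the unit of the adjunction $(-)\otimes K \dashv \uV(K,-)$, which is a harmless elaboration of the same step.
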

\begin{proof}
The left $\J$-cotensor counits $\gamma^K_J = \Coev:J \rightarrow \J^\op(J\otimes K,K) = \uV(K,J\otimes K)$ \pbref{par:lrdes_cots} are (extraordinarily) $\V$-natural in $K \in \J^\op$ with respect to the $\V$-functor $J\otimes (-)$.  But by \bref{par:cot}, $[J,-]_\ell$ is the unique $\V$-endofunctor on $\J^\op$ that is given on objects by $K \mapsto J\otimes K$ and makes the $\gamma^K_J$ $\V$-natural in $K \in \J^\op$, so $[J,-]_\ell = J \otimes (-)$.  By a similar argument, $[J,-]_r = (-)\otimes J$.
\end{proof}

\begin{LemSub}\label{thm:der_cot_via_left_cot}
For all $J,K,K' \in \ob\J$, the diagram
$$
\xymatrix{
J \ar[d]_{\gamma^K_J} \ar[rr]^{\gamma_J} & & \T(J,I) \ar[d]^{[K,-]_r}\\
\T(J\otimes K,K) \ar[rr]_{\T(1,\tau(\ell_K))} & & \T(J\otimes K,I\otimes K)
}
$$
commutes, where $\gamma_J$ is the counit for the designated cotensor $[J,I] = J$ in $\T$, $\gamma^K_J$ is the counit for the left cotensor $[J,K]_\ell = J\otimes K$ in $\T$, and $\tau(\ell_K):K \xrightarrow{\sim} I \otimes K$ is the isomorphism in $\T$ obtained from the isomorphism $\ell_K:I \otimes K \rightarrow K$ in $\J$ by applying $\tau:\J^\op \rightarrow \T$.
\end{LemSub}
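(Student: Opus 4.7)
The plan is to reduce the claim to an equation in $\J^\op$ by exploiting the fact that the structural $\V$-functor $\tau : \J^\op \to \T$ is itself a morphism of $\J$-theories and hence, by \bref{rem:morph_pres_lr_jcots}, strictly preserves right $\J$-cotensors.  By Lemma \bref{thm:lrcot_vfuncs_for_jop}, the right cotensor $\V$-endofunctor on $\J^\op$ is simply $(-)\otimes K$, so the strict preservation amounts to an equality of $\V$-functors $[K,-]_r \circ \tau \;=\; \tau \circ ((-)\otimes K)\;:\;\J^\op \to \T$.  On hom-objects at the pair $(J,I)$ this gives the identity
$$[K,-]_r \cdot \tau_{JI} \;=\; \tau_{J\otimes K,\,I\otimes K} \cdot ((-)\otimes K)_{JI}\;:\;\J^\op(J,I) \longrightarrow \T(J\otimes K,\,I\otimes K)\;.$$

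\textbf{Reduction.} I would then unfold the definitions $\gamma_J = \tau_{JI} \cdot \eta_J$, where $\eta_J : J \xrightarrow{\sim} \uV(I,J) = \J^\op(J,I)$ is the canonical isomorphism, and $\gamma^K_J = \tau_{J\otimes K,K} \cdot \Coev$.  Applying the $\V$-functoriality of $\tau$ in its second variable to the morphism $\ell_K : K \to I\otimes K$ of $\J^\op$ also yields $\T(1,\tau(\ell_K)) \cdot \tau_{J\otimes K,K} \;=\; \tau_{J\otimes K,\,I\otimes K} \cdot \J^\op(J\otimes K,\ell_K)$.  Both composites around the square of the lemma therefore factor through the common morphism $\tau_{J\otimes K,\,I\otimes K}$, and it suffices to establish
$$((-)\otimes K)_{JI} \cdot \eta_J \;=\; \J^\op(J\otimes K,\,\ell_K) \cdot \Coev\;:\;J \longrightarrow \J^\op(J\otimes K,\,I\otimes K)\;,$$
which, under the identifications $\J^\op(A,B) = \uV(B,A)$ coming from $\J \hookrightarrow \uV$, amounts to an equation of morphisms $J \to \uV(I\otimes K,\,J\otimes K)$ in $\V$.

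\textbf{Coherence.} I would finish by transposing both sides of the residual equation under the tensor-hom adjunction $(-)\otimes(I\otimes K) \dashv \uV(I\otimes K,-)$.  Using that the transpose of $\Coev : J \to \uV(K,J\otimes K)$ is the identity on $J\otimes K$, the transpose of the right-hand side works out to $1_J \otimes \ell_K : J\otimes(I\otimes K) \to J\otimes K$.  Using that the transpose of $\eta_J$ is the right unitor $r_J : J\otimes I \to J$, and that the internal-hom action $((-)\otimes K)$ on $\uV(I,J)$ is characterised by tensoring evaluation with $K$, the transpose of the left-hand side works out to $(r_J \otimes 1_K) \cdot \alpha^{-1}_{J,I,K} : J\otimes(I\otimes K) \to J\otimes K$.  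These two morphisms coincide by the triangle coherence identity in the monoidal category $\V$, and the lemma follows.  The main obstacle is not any substantive difficulty but purely the bookkeeping --- keeping straight the identification $\J^\op(A,B) = \uV(B,A)$ and the direction reversals it induces, and correctly applying the two distinct tensor-hom adjunctions in play.
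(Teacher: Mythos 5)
Your proposal is correct and follows essentially the same route as the paper's proof: reduce to $\T = \J^\op$ via the strict preservation of the relevant cotensors by $\tau$ (using \bref{rem:morph_pres_lr_jcots} and \bref{thm:lrcot_vfuncs_for_jop}), then transpose the residual square in $\V$ and conclude from the unit coherence axiom ($r_J \otimes 1_K = 1_J \otimes \ell_K$ modulo the associator, i.e.\ MC2 of Eilenberg--Kelly). The paper's version is merely terser in carrying out the reduction, stating the resulting diagram in $\J^\op$ directly rather than exhibiting the common factorization through $\tau_{J\otimes K,\,I\otimes K}$ as you do.
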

\begin{proof}
By \bref{def:morph_th} and \bref{rem:morph_pres_lr_jcots}, $\tau$ strictly preserves the designated cotensors $[J,I] = J$ as well as all the left and right $\J$-cotensors, so we readily reduce to the case of $\T = \J^\op$.  In this case, \bref{thm:lrcot_vfuncs_for_jop} entails that the diagram in question is simply
$$
\xymatrix{
J \ar[d]_{\Coev} \ar[r]^{\gamma_J} & \uV(I,J) \ar[d]^{(-)\otimes K}\\
\uV(K,J\otimes K) \ar[r]_(.45){\uV(\ell,1)} & \uV(I\otimes K,J\otimes K)
}
$$
recalling that $\gamma_J$ here is the transpose of $r_J:J\otimes I \rightarrow J$.  Upon taking transposes of the two composites in this diagram, we obtain the morphisms
$$J \otimes \ell_K,\;r_J\otimes K\;:\;J\otimes I \otimes K \rightarrow J \otimes K$$
which are equal, by one of the axioms for monoidal categories (MC2 of \cite[II.1]{EiKe}).
\end{proof}

\begin{LemSub}\label{thm:lemma_on_cmtn_vs_thoms}
Let $A:\T \rightarrow \U$ and $B:\sS \rightarrow \U$ be morphisms of $\J$-theories, and let $J,J',K,K'$ be objects of $\J$.   Write $\upsilon:\J^\op \rightarrow \U$ for the unique morphism of theories.  Then the following conditions are equivalent:
\begin{enumerate}
\item $A_{JJ'}:\T(J,J') \rightarrow \U(J,J')$ commutes with $B_{KK'}:\sS(K,K') \rightarrow \U(K,K')$.
\item The composite
$$\sS(K,K') \xrightarrow{B_{KK'}} \U(K,K') \xrightarrow{\theta_{KK'}} \U(I\otimes K,I\otimes K')$$
preserves $\T$-operations of input arity $J$ and output arity $J'$ \pbref{def:pres_ops}, where the objects $I \otimes K$ and $I \otimes K'$ of $\U$ are considered here as the carriers of the $\T$-algebras $[K,A]_r,[K',A]_r:\T \rightarrow \U$ \pbref{par:cot_alg}, respectively, and $\theta_{KK'}$ is defined as the isomorphism $\U(\upsilon(\ell^{-1}_K),\upsilon(\ell_{K'}))$.
\end{enumerate}
\end{LemSub}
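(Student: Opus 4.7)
The plan is to compute both sides of the equation defining condition 2 and match them with the Kronecker products in condition 1. Set $f := \theta_{KK'} \cdot B_{KK'}$, $A_1 := [K, A]_r$, and $A_2 := [K', A]_r$, so that $\ca{A_1} = I \otimes K$ and $\ca{A_2} = I \otimes K'$. By \eqref{eq:mor_com_rcot} and the fact that $A$ is identity-on-objects, $A_i$ factors as $[K^{(i)}, -]^\U_r \circ A$ (with $K^{(1)} = K$, $K^{(2)} = K'$), whence $A_{i, LL'} = [K^{(i)}, -]^\U_r \cdot A_{LL'}$ on homs and $A_i L = L \otimes K^{(i)}$ on objects. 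Unpacking $\phi_{JJ'}$ and $\psi_{JJ'}$ from \bref{par:charns_t_homs} and substituting, I obtain
\[
\phi_{JJ'} \cdot (1 \otimes f) \;=\; c \cdot \bigl(([K,-]^\U_r \cdot A_{JJ'}) \otimes (\lambda^{A_1 A_2}_{J'} \cdot \theta_{KK'} \cdot B_{KK'})\bigr)
\]
with $c$ the diagrammatic composition in $\U$, and an analogous formula for $\psi_{JJ'} \cdot (1 \otimes f)$ involving $[K',-]^\U_r$, $\lambda^{A_1 A_2}_J$, and textual composition.

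The heart of the proof is then to establish the two identities
\[
\lambda^{A_1 A_2}_{J'} \cdot \theta_{KK'} \;=\; [J',-]^\U_\ell\;,\qquad \lambda^{A_1 A_2}_J \cdot \theta_{KK'} \;=\; [J,-]^\U_\ell\;.
\]
Both have essentially the same proof; for the first, the point is that the object $J' \otimes K^{(i)} = A_i(J')$ carries two cotensor-by-$J'$ structures in $\U$: the $A_i$-induced one on the carrier $\ca{A_i} = I \otimes K^{(i)}$ (with counit $A_i(\gamma_{J'})$), and the left cotensor structure on $K^{(i)}$ (with counit $\gamma^{K^{(i)}}_{J'}$). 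Using $A_i = [K^{(i)},-]^\U_r \circ A$ together with $A \circ \tau = \upsilon$, I get $A_i(\gamma_{J'}) = [K^{(i)},-]^\U_r \cdot \gamma^\U_{J'}$, and Lemma \bref{thm:der_cot_via_left_cot} applied in $\U$ yields
\[
A_i(\gamma_{J'}) \;=\; \U(1, \upsilon(\ell_{K^{(i)}})) \cdot \gamma^{K^{(i)}}_{J'}\;,
\]
which is precisely the universal-property condition saying that the comparison isomorphism $[J', \upsilon(\ell_{K^{(i)}})]$ between the two cotensor structures is the identity on $J' \otimes K^{(i)}$. Corollary \bref{thm:cot_structs_on_same_objs}, applied with $f_1 = \upsilon(\ell_K)$ and $f_3 = \upsilon(\ell_{K'})$ (so that $\U(f_1^{-1}, f_3) = \theta_{KK'}$), then yields the first identity; the second is identical with $J'$ replaced by $J$.

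Substituting these identities back, $\phi_{JJ'} \cdot (1 \otimes f)$ becomes $c \cdot \bigl(([K,-]^\U_r \cdot A_{JJ'}) \otimes ([J',-]^\U_\ell \cdot B_{KK'})\bigr)$, which is exactly $A_{JJ'} * B_{KK'}$ by \eqref{eq:fst_kp}; and $\psi_{JJ'} \cdot (1 \otimes f) = A_{JJ'} \stt B_{KK'}$ by \eqref{eq:snd_kp}. Hence condition 2 is equivalent to condition 1. The main obstacle is the crux step: verifying that the $A_i$-cotensor of $I \otimes K^{(i)}$ by $J'$ coincides on the nose with the left cotensor of $K^{(i)}$ by $J'$ once the carriers are transported along the left unitor, which is precisely where Lemma \bref{thm:der_cot_via_left_cot} and Corollary \bref{thm:cot_structs_on_same_objs} do the essential work.
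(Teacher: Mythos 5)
Your proof is correct and follows essentially the same route as the paper's: both reduce the equivalence to the single identity $\lambda^{[K,A]_r[K',A]_r}_{J}\cdot\theta_{KK'}=[J,-]_\ell$ (and likewise for $J'$), established via Lemma \bref{thm:der_cot_via_left_cot} and Corollary \bref{thm:cot_structs_on_same_objs} by showing the comparison isomorphism $[J,\upsilon(\ell_{K})]$ is the identity. Your version merely makes explicit the matching of $\phi_{JJ'}\cdot(1\otimes f)$ and $\psi_{JJ'}\cdot(1\otimes f)$ with the two Kronecker products, which the paper leaves as ``one finds that this diagram is almost exactly the same.''
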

\begin{proof}
Recall from \bref{par:cot_alg} that the pointwise right cotensor $[K,A]_r$ of $A$ in $\Alg{\T}_\U$ is the composite $[K,-]_r \circ A$ of $A$ with the $\V$-functor $[K,-]_r:\U \rightarrow \U$, and similarly for $[K',A]$.  Therefore it follows immediately from the definition that $A_{JJ'}$ and $B_{KK'}$ commute if and only if the diagram
\begin{equation}\label{eq:diag_commt_morph_via_rcot}
\xymatrix{
\T(J,J')\sS(K,K') \ar[d]_{1\otimes B_{KK'}} \ar[r]^{1 \otimes B_{KK'}} & \T(J,J')\U(K,K') \ar[d]^{[K,A]_r[J',-]_\ell}\\
\T(J,J')\U(K,K') \ar[d]_{[K',A]_r[J,-]_\ell} & \U(JK,J'K)\U(J'K,J'K') \ar[d]^c\\
U(JK',J'K')\U(JK,JK') \ar[r]_c & \U(JK,J'K')
}
\end{equation}
commutes, where we have omitted some subscripts and written $\otimes$ as juxtaposition.

On the other hand, condition 2 is (by definition) equivalent to the commutativity of a diagram of the form \eqref{eq:diag_fam_thoms_ii}, and one finds that this diagram is almost exactly the same as \eqref{eq:diag_commt_morph_via_rcot}, except that one must substitute the composites
\begin{equation}\label{eq:lambdaj_of_theta}\U(K,K') \xrightarrow{{\theta_{KK'}}} \U(IK,IK') \xrightarrow{\lambda^{[K,A]_r[K',A]_r}_J} \U(JK,JK')\end{equation}
$$\U(K,K') \xrightarrow{{\theta_{KK'}}} \U(IK,IK') \xrightarrow{\lambda^{[K,A]_r[K',A]_r}_{J'}} \U(J'K,J'K')$$
in place of the morphisms $[J,-]_\ell$ and $[J',-]_\ell$ that appear in \eqref{eq:diag_commt_morph_via_rcot}, noting that $[K,A]_r(I) = IK$, $[K,A]_r(J) = JK$, and similarly with $J',K'$ in place of $J,K$.  Here the morphisms $\lambda^{[K,A]_r[K',A]_r}_J,\lambda^{[K,A]_r[K',A]_r}_{J'}$ are as defined in \bref{thm:obj_of_jop_homs}.

Hence it suffices to show that the composite \eqref{eq:lambdaj_of_theta} equals $[J,-]_\ell$ for all objects $J,K,K'$ of $\J$.  This we will accomplish through a suitable invocation of \bref{thm:cot_structs_on_same_objs}.  First observe that since $[K,A]_r = [K,-]_r \circ A$ and $[K,A]_r$ is a $\T$-algebra, the composite
\begin{equation}\label{eq:counit_cot_ik_by_j}\tilde{\gamma}^K_J := \left(J \xrightarrow{\gamma_J} \T(J,I) \xrightarrow{A_{JI}} \U(J,I) \xrightarrow{([K,-]_r)_{JI}} \U(JK,IK)\right)\end{equation}
presents $JK$ as a cotensor $[J,IK]$ of $IK$ by $J$ in $\U$.  But $[J,K]_\ell = JK$ is also a cotensor of $K \cong IK$ by $J$ in $\U$, so the isomorphism $\upsilon(\ell_K):K \rightarrow IK$ induces an isomorphism $[J,\upsilon(\ell_K)]:[J,K]_\ell \rightarrow [J,IK] = JK$.

We claim that this induced isomorphism $[J,\upsilon(\ell_K)]$ is the identity arrow on $JK$.  Indeed, the counit \eqref{eq:counit_cot_ik_by_j} is equally the composite
$$\tilde{\gamma}^K_J = \left(J \xrightarrow{\gamma_J} \U(J,I) \xrightarrow{([K,-]_r)_{JI}} \U(JK,IK)\right)$$
since $A$ strictly preserves the designated cotensors $[J,I] = J$, and by \bref{thm:der_cot_via_left_cot} this composite can be re-expressed as
\begin{equation}\label{eq:char_gamma_tilde}\tilde{\gamma}^K_J = \left(J \xrightarrow{\gamma_J^K} \U(JK,K) \xrightarrow{\U(1,\upsilon(\ell_K))} \U(JK,IK)\right).\end{equation}

Similar remarks apply with $K'$ in place of $K$, and by definition the morphism $\lambda^{[K,A]_r[K',A]_r}_J$ appearing in \eqref{eq:lambdaj_of_theta} is the morphism $[J,-]:\U(IK,IK') \rightarrow \U(JK,JK')$ induced by the cotensors $[J,IK] = JK$ and $[J,IK'] = JK'$.  We can now invoke \bref{thm:cot_structs_on_same_objs} to deduce that the composite \eqref{eq:lambdaj_of_theta} equals $[J,-]_\ell$, as needed.
\end{proof}

\begin{ThmSub}\label{thm:cmtn_via_thoms}
Let $A:\T \rightarrow \U$ and $B:\sS \rightarrow \U$ be morphisms of $\J$-theories.  Then the following are equivalent:
\begin{enumerate}
\item $A$ commutes with $B$.
\item For all objects $K,K'$ of $\J$, $B_{KK'}:\sS(K,K') \rightarrow \U(K,K')$ is valued in $\T$-homomorphisms \pbref{def:fam_t_homs} between the pointwise designated cotensors $[K,A]$ and $[K',A]$ of the $\T$-algebra $A$ \pbref{par:cot_alg}.
\end{enumerate}
\end{ThmSub}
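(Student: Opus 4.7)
The plan is to reduce to Lemma \bref{thm:lemma_on_cmtn_vs_thoms} by showing that the pointwise designated cotensor $[K,A]$ and the pointwise right cotensor $[K,A]_r$ are canonically isomorphic as $\T$-algebras in $\U$, and that the hom-level effect of this $\T$-algebra isomorphism is precisely the map $\theta_{KK'}$ appearing in the statement of that Lemma.

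First, I would fix a pair $K,K' \in \ob\J$. Lemma \bref{thm:lemma_on_cmtn_vs_thoms} equates the condition ``$A_{JJ'}$ commutes with $B_{KK'}$ for all $J,J' \in \ob\J$'' with the assertion that $\theta_{KK'} \cdot B_{KK'}$ preserves $\T$-operations of input arity $J$ and output arity $J'$ between $[K,A]_r$ and $[K',A]_r$ for all $J,J'$.  By \bref{def:fam_t_homs}, this is in turn equivalent to saying that $\theta_{KK'} \cdot B_{KK'}$ is valued in $\T$-homomorphisms from $[K,A]_r$ to $[K',A]_r$.  It therefore suffices to establish, for each fixed pair $K,K'$, that $B_{KK'}$ is valued in $\T$-homomorphisms from $[K,A]$ to $[K',A]$ if and only if $\theta_{KK'} \cdot B_{KK'}$ is valued in $\T$-homomorphisms from $[K,A]_r$ to $[K',A]_r$; quantifying over all $K,K'$ then yields the equivalence of conditions 1 and 2.

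For this last step I would exploit the fact that $[K,A]$ and $[K,A]_r$ are both pointwise cotensors of the same $\T$-algebra $A$ by $K$ in $\Alg{\T}_\U$ \pbref{par:cot_alg}, and hence are canonically isomorphic in $\Alg{\T}_\U$. Evaluating at $I$ (i.e., taking carriers), this isomorphism restricts to the unique isomorphism between the designated cotensor $[K,I] = K$ and the right cotensor $[K,I]_r = I \otimes K$ of $I$ by $K$ in $\U$, which by comparing the counits described in \bref{par:des_jcot} and \bref{par:lrdes_cots}(2) one identifies as $\upsilon(\ell_K): K \to I \otimes K$ in $\U$.  Conjugation by these carrier isomorphisms at $K$ and $K'$ is then precisely $\U(\upsilon(\ell_K^{-1}),\upsilon(\ell_{K'})) = \theta_{KK'}$; and since the strongly monic inclusion $\Alg{\T}_\U(-,-) \hookrightarrow \U(\ca{-},\ca{-})$ from \bref{thm:existence_of_vcat_of_talgs} is $\V$-natural in its arguments (in particular with respect to isomorphisms of $\T$-algebras), the desired equivalence of factorizations through the two subobjects follows by a standard naturality argument.

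The chief technical obstacle I anticipate is the explicit identification of the carrier component of the canonical $\T$-algebra isomorphism $[K,A] \cong [K,A]_r$ as $\upsilon(\ell_K)$.  This reduces to verifying that the right-cotensor counit for $[K,I]_r = I \otimes K$ in $\U$, precomposed with $\U(\upsilon(\ell_K),1)$, agrees with the designated-cotensor counit $\gamma_K$ of \bref{par:des_jcot}, which should fall out from a direct unwinding of the definitions in \bref{par:lrdes_cots}, possibly together with an application of \bref{thm:cot_lem} to handle the passage from the two distinct cotensor structures on $K$ and $I \otimes K$ to the $\V$-functorial morphism induced at the hom-level.
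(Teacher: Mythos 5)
Your proposal is correct and follows essentially the same route as the paper's own proof: reduce via Lemma \bref{thm:lemma_on_cmtn_vs_thoms} to the right-cotensor formulation, then transport along the canonical isomorphisms of $\T$-algebras $[K,A] \cong [K,A]_r$ and $[K',A] \cong [K',A]_r$, whose carriers are identified as $\upsilon(\ell_K)$ and $\upsilon(\ell_{K'})$ so that conjugation by them is exactly $\theta_{KK'}$. The paper compresses the final transport step into ``the result now follows,'' whereas you spell it out via naturality of the strong monomorphisms from \bref{thm:existence_of_vcat_of_talgs}, but the argument is the same.
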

\begin{proof}
By \bref{thm:lemma_on_cmtn_vs_thoms} we know that 1 is equivalent to the statement that for all $K,K' \in \ob\J$, $\theta_{KK'} \cdot B_{KK'}$ is valued in $\T$-homomorphisms from $[K,A]_r$ to $[K',A]_r$.  But we have isomorphisms of $\T$-algebras $\alpha:[K,A] \rightarrow [K,A]_r$ and $\beta:[K',A] \rightarrow [K',A]_r$ whose underlying morphisms $\ca{\alpha}$ and $\ca{\beta}$ in $\U$ are the canonical isomorphisms $[K,I] \rightarrow [K,I]_r$ and $[K',I] \rightarrow [K',I]_r$ between the \textit{designated} and \textit{right} cotensors of $I$ by $K$ and $K'$ \pbref{par:lrdes_cots}.  It is straightforward to show that these isomorphisms in $\U$ are $\upsilon(\ell_K):K \rightarrow I \otimes K$ and $\upsilon(\ell_{K'}):K' \rightarrow I \otimes K'$, respectively, in the notation of \bref{thm:lemma_on_cmtn_vs_thoms}, so since $\theta_{KK'} = \U(\upsilon(\ell^{-1}_K),\upsilon(\ell_{K'})) = \U(\ca{\alpha}^{-1},\ca{\beta})$ the result now follows.
\end{proof}

\section{Commutants}\label{sec:cmtnt}

Let $\T$ and $\U$ denote $\J$-theories for which the $\V$-category $\Alg{\T}_\U$ of $\T$-algebras in $\U$ exists.  Recall that any morphism of $\J$-theories $A:\T \rightarrow \U$ is, in particular, a $\T$-algebra in $\U$.

\begin{DefSub}\label{def:cmtnt}
Given a morphism of $\J$-theories $A:\T \rightarrow \U$, the \textbf{commutant} $\T^\bot_A$ of $\T$ with respect to $A$, also called the \textit{commutant of $A$}, is the full $\J$-theory of $A$ in $\Alg{\T}_\U$.  In symbols,
$$\T^\bot_A = (\Alg{\T}_\U)_A\;.$$
Explicitly,
\begin{equation}\label{eq:cmtnt_hom}\T^\bot_A(J,K) = \Alg{\T}_\U([J,A],[K,A])\;\;\;\;\;\;(J,K \in \ob\J),\end{equation}
where $[J,A],[K,A]:\T \rightarrow \U$ are the cotensors in $\Alg{\T}_\U$ \pbref{par:cot_alg}.  Even if $\Alg{\T}_\U$ does not exist, we can clearly still define the commutant $\T^\bot_A$ as soon as the relevant objects of $\T$-homomorphisms \eqref{eq:cmtnt_hom} exist, in which case we say that \textbf{the commutant exists}.
\end{DefSub}

\begin{ThmSub}\label{thm:existence_of_commutant_via_intersections}
If $\V$ has equalizers and intersections of $(\ob\J)$-indexed families of strong subobjects, then the commutant of any morphism of $\J$-theories exists.
\end{ThmSub}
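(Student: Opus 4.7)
The plan is to invoke Theorem \bref{thm:existence_of_vcat_talgs} directly. Since $\U$ is in particular a $\V$-category, the hypothesis that $\V$ has equalizers and intersections of $(\ob\J)$-indexed families of strong subobjects yields that $\Alg{\T}_\U$ exists. By Definition \bref{def:cmtnt}, the commutant is the full $\J$-theory of $A$ in $\Alg{\T}_\U$, so its hom-objects $\T^\perp_A(J,K) = \Alg{\T}_\U([J,A],[K,A])$ are available as soon as $\Alg{\T}_\U$ is defined.

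The only point that needs a brief check is that the cotensors $[J,A]$ of the $\T$-algebra $A$ by objects $J \in \ob\J$ exist in $\Alg{\T}_\U$, since these are used to form the hom-objects of the full $\J$-theory $(\Alg{\T}_\U)_A$. This is provided by \bref{par:cot_alg}: because $\U$ itself carries (standard designated) $\J$-cotensors as a $\J$-theory, pointwise cotensors $[J,A] = [J,-] \circ A$ exist in $\Alg{\T}_\U$ for every $J \in \ob\J$. Hence the full $\J$-theory $(\Alg{\T}_\U)_A$ is defined, and this is by definition the commutant $\T^\perp_A$.

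There is no real obstacle here — the theorem is essentially a corollary of \bref{thm:existence_of_vcat_talgs}, packaged via the reformulation of the commutant as a full $\J$-theory inside $\Alg{\T}_\U$. The one thing worth stating explicitly in the write-up is the observation that we do not need the full strength of existence of \emph{all} $\T$-homomorphism objects in $\Alg{\T}_\U$; it would suffice to have the particular $(\ob\J) \times (\ob\J)$-indexed family $\Alg{\T}_\U([J,A],[K,A])$, and each such object arises by \bref{thm:existence_of_vcat_talgs}(2) as an intersection of an $(\ob\J)$-indexed family of equalizers inside $\U([J,I],[K,I])$. Either formulation makes the proof a one-line appeal to the earlier existence result.
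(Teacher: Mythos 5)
Your proposal is correct and follows the paper's own route: the paper likewise proves this theorem as an immediate corollary of Theorem \bref{thm:existence_of_vcat_talgs}, with the commutant realized as the full $\J$-theory of $A$ in $\Alg{\T}_\U$ using the pointwise cotensors of \bref{par:cot_alg}. Your closing observation that only the particular family of hom-objects $\Alg{\T}_\U([J,A],[K,A])$ is needed is a fine extra remark but does not change the argument.
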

\begin{proof}
This follows immediately from \bref{thm:existence_of_vcat_talgs}.
\end{proof}

\begin{DefSub}
A $\J$-\textbf{theory over} $\U$ is a $\J$-theory $\T$ equipped with a morphism $\T \rightarrow \U$.  Given a $\J$-theory $\T$ over $\U$, we denote the commutant of the associated morphism $\T \rightarrow \U$ as simply $\T^\perp$ and call it the \textbf{commutant of} $\T$.  Similarly, given $\J$-theories $\T$ and $\sS$ over $\U$, we say that $\T$ and $\sS$ \textbf{commute} if their associated morphisms to $\U$ commute, in which case we write $\T \perp \sS$.
\end{DefSub}

\begin{RemSub}
It is helpful to consider the case of a subtheory $\T \hookrightarrow \U$, in which case we also call $\T^\perp$ the \textit{commutant of $\T$ in $\U$}.  Fittingly, $\T^\perp$ is always a subtheory of $\U$, even when $\T$ is not:
\end{RemSub}

\begin{PropSub}\label{thm:cmtnt_is_subth}
Given a morphism of $\J$-theories $A:\T \rightarrow \U$, the commutant $\T^\bot_A$ is a strong subtheory of $\U$.
\end{PropSub}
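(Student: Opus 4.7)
The plan is to exhibit a canonical identity-on-objects $\V$-functor $F:\T^\bot_A \to \U$ and verify that it is both strongly faithful and a morphism of $\J$-theories. By construction \pbref{def:cmtnt}, $\T^\bot_A = (\Alg{\T}_\U)_A$ is the full $\J$-theory of $A$ in $\Alg{\T}_\U$, and this makes sense because $A \in \Alg{\T}_\U$ carries standard designated $\J$-cotensors: namely, the pointwise cotensors $[J,A] = [J,-] \circ A$ of \bref{par:cot_alg}, which are standard since $[I,A] = [I,-] \circ A = A$ (as the designated cotensor $\V$-functor $[I,-]$ on $\U$ is the identity by standardness \pbref{par:des_jcot}). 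Accordingly, \bref{def:full_theory} supplies an identity-on-homs $\V$-functor $\iota:\T^\bot_A \rightarrowtail \Alg{\T}_\U$ sending $J \mapsto [J,A]$, which is a normal $\T^\bot_A$-algebra in $\Alg{\T}_\U$ with carrier $A$.

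I will then take $F$ to be the composite $\T^\bot_A \overset{\iota}{\rightarrowtail} \Alg{\T}_\U \xrightarrow{\ca{-}} \U$, where $\ca{-} = \Ev_I:\Alg{\T}_\U \to \U$ is the carrier $\V$-functor of \bref{par:vcat_talgs}. On objects, $F(J) = \ca{[J,A]} = [J,A](I) = [J, A(I)] = [J,I] = J$, using that $A(I) = I$ (as $A$ is identity on objects) and the standardness of designated cotensors in $\U$; hence $F$ is identity-on-objects. Since $\iota$ is identity-on-homs, each component $F_{JK}$ coincides with $\ca{-}_{[J,A],[K,A]}:\Alg{\T}_\U([J,A],[K,A]) \to \U(J,K)$, which is a strong monomorphism by \bref{rem:carrier_functor_str_ff}; therefore $F$ is strongly faithful.

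To show that $F$ is a morphism of $\J$-theories, I will invoke \bref{def:morph_th}, reducing the task to exhibiting $F$ as a normal $\T^\bot_A$-algebra in $\U$ with carrier $I$. The carrier $\V$-functor $\ca{-} = \Ev_I$ strictly preserves $\J$-cotensors, since cotensors in $\Alg{\T}_\U$ are formed pointwise \pbref{par:cot_alg}; in particular, it sends the designated cotensor $[J,A]$ of $A$ in $\Alg{\T}_\U$ to $[J,\ca{A}] = [J,I] = J$, the designated cotensor of $I$ in $\U$, and it sends $A$ itself to $I$. Composing the normal $\T^\bot_A$-algebra $\iota$ (in $\Alg{\T}_\U$, with carrier $A$) with the $\J$-cotensor-preserving $\V$-functor $\ca{-}$ therefore yields a normal $\T^\bot_A$-algebra $F$ in $\U$ with carrier $I$, as desired.

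The main subtlety to watch will be ensuring that $\ca{-}$ takes the designated cotensors in $\Alg{\T}_\U$ to those in $\U$ \emph{strictly}, and not merely up to isomorphism; this is secured by the standardness conventions of \bref{par:des_jcot}, which force the equalities $[J,I] = J$ in $\U$ and $[I,A] = A$ in $\Alg{\T}_\U$ on the nose. In the case that $\Alg{\T}_\U$ does not exist as a whole $\V$-category but the commutant does, the same argument runs using only the hom-objects $\Alg{\T}_\U([J,A],[K,A])$ that exist by hypothesis, together with the strongly monic components supplied by the pairwise-equalizer characterization of \bref{thm:existence_of_vcat_of_talgs}.
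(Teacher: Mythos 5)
Your proposal is correct and follows essentially the same route as the paper's proof: both factor the subtheory embedding as the identity-on-homs $\V$-functor $\T^\bot_A \rightarrowtail \Alg{\T}_\U$ of \bref{def:full_theory} followed by the carrier $\V$-functor $\ca{\text{$-$}} = \Ev_I$, then use that the former is a normal $\T^\bot_A$-algebra, that $\ca{\text{$-$}}$ strictly preserves the pointwise designated cotensors, and that $\ca{\text{$-$}}$ is strongly faithful \pbref{rem:carrier_functor_str_ff}. Your added checks (standardness of the pointwise cotensors, the on-objects computation, and the remark on the case where $\Alg{\T}_\U$ does not exist globally) are consistent elaborations of details the paper leaves implicit.
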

\begin{proof}
Let $\iota$ denote the composite $\V$-functor
$$\T^\bot_A = (\Alg{\T}_\U)_A \overset{i}{\rightarrowtail} \Alg{\T}_\U \xrightarrow{|-|} \U$$
whose first factor $i$ is the canonical identity-on-homs $\V$-functor \pbref{def:full_theory} and whose second factor $\ca{\text{$-$}}$ is the `forgetful' $\V$-functor \pbref{par:vcat_talgs}.  Taking the $\J$-cotensors in \eqref{eq:cmtnt_hom} to be the pointwise designated cotensors \pbref{par:cot_alg}, it follows that $\ca{\text{$-$}}$ strictly preserves the designated $\J$-cotensors.  But $i$ is a normal $\T^\perp_A$-algebra \pbref{def:full_theory}, so the composite $\iota$ is a normal $\T^\perp_A$-algebra with carrier $\iota(I) = \ca{A} = I$, equivalently, a morphism of $\J$-theories \pbref{def:morph_th}.  Further, $\iota$ is strongly faithful since $\ca{\text{$-$}}$ is strongly faithful \pbref{rem:carrier_functor_str_ff}.
\end{proof}

\begin{ExaSub}[\textbf{The commutant or centralizer of a subring}]\label{exa:cmtnt_subring}
Let $a:R \rightarrow U$ be a morphism of monoids in $\V$, with corresponding morphism of $\{I\}$-theories $A:\R \rightarrow \U$.  Then the commutant of $A$ is a submonoid $R^\perp_a \hookrightarrow U$, namely the equalizer of the pair of morphisms $\Psi_{II},\Phi_{II}:U \rightarrow \uV(R,U)$ (in the notation of \ref{eq:pairs_specifying_obj_thoms}) obtained as transposes of the composites $m_{U} \cdot (a \otimes 1_U),\;m_{U^\op} \cdot (a \otimes 1_U)\;:\;R \otimes U \rightarrow U$ in the notation of \bref{exa:mro}, \bref{exa:cmtn_ring_hom}.  When $\V = \Ab$, so that $a$ is a homomorphism of rings, $R^\perp_a \subseteq U$ is the familiar centralizer (or commutant) of the image $a(R) \subseteq U$ of $a$.
\end{ExaSub}

\begin{ExaSub}[\textbf{Commutants for Lawvere theories}]
When $\V = \Set$ and $\J = \FinCard$ we recover the notion of commutant for Lawvere theories that is studied in \cite{Lu:CvxAffCmt} and is due to Wraith \cite{Wra:AlgTh}, who defined a similar notion of commutant for Linton's equational theories \cite{Lin:Eq} (i.e. $\J$-theories with $\J = \V = \Set$).  By \cite[5.6, 5.9]{Lu:CvxAffCmt}, the commutant of a subtheory $\T$ of a Lawvere theory $\U$ is the subtheory $\T^\perp \hookrightarrow \U$ consisting of those morphisms $\mu$ of $\U$ with the property that $\mu$ commutes with every morphism $\nu$ of $\T$.
\end{ExaSub}

The link that was established in \bref{thm:cmtn_via_thoms} between commutation and the notion of $\T$-homomorphism now enables us to make the connection between commutants and commutation in our general context:

\begin{ThmSub}\label{thm:commutants_via_commutativity}
Let $A:\T \rightarrow \U$ and $B:\sS \rightarrow \U$ be morphisms of $\J$-theories.  Then $A$ and $B$ commute if and only if $B$ factors through the commutant $\T^\perp_A \hookrightarrow \U$ of $A$.
\end{ThmSub}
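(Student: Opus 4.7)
The plan is to chain together Theorem \bref{thm:cmtn_via_thoms} (which recasts commutation in terms of being valued in $\T$-homomorphisms) with Theorem \bref{thm:existence_of_vcat_talgs}(2) (which characterizes the object of $\T$-homomorphisms as a universal strong subobject of the relevant hom).

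First, observe that for each $K \in \ob\J$, the pointwise designated cotensor $[K,A] = [K,-] \circ A:\T \rightarrow \U$ has carrier $\ca{[K,A]} = [K,A](I) = [K,A(I)] = [K,I] = K$, since $A$ is a morphism of $\J$-theories (so $A(I) = I$) and the designated cotensors in $\U$ are standard \pbref{par:des_jcot}.  Hence the strong monomorphism exhibiting $\Alg{\T}_\U([K,A],[K',A])$ as an object of $\T$-homomorphisms sits as a subobject of $\U(K,K')$, and by Definition \bref{def:cmtnt} this strong subobject is exactly the component $\T^\perp_A(K,K') \hookrightarrow \U(K,K')$ of the embedding $\iota:\T^\perp_A \hookrightarrow \U$ from \bref{thm:cmtnt_is_subth}.

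Next, by \bref{thm:cmtn_via_thoms}, $A$ commutes with $B$ iff for all $K,K' \in \ob\J$ the morphism $B_{KK'}:\sS(K,K') \to \U(K,K')$ is valued in $\T$-homomorphisms from $[K,A]$ to $[K',A]$.  By \bref{thm:existence_of_vcat_talgs}(2) this holds iff each $B_{KK'}$ factors through the strong monomorphism $\T^\perp_A(K,K') \hookrightarrow \U(K,K')$, i.e. iff there exist unique morphisms $\tilde{B}_{KK'}:\sS(K,K') \to \T^\perp_A(K,K')$ with $\iota_{KK'} \cdot \tilde{B}_{KK'} = B_{KK'}$.

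Finally, I have to check that such a family of componentwise factorizations assembles into a morphism of $\J$-theories $\tilde{B}:\sS \to \T^\perp_A$ with $\iota \circ \tilde{B} = B$.  The $\V$-functoriality of $\tilde{B}$ (compatibility with composition and units) follows from that of $B = \iota\tilde{B}$ together with the faithfulness of $\iota$ \pbref{thm:cmtnt_is_subth}, which allows one to cancel $\iota$ on the left.  Similarly, writing $\sigma:\J^\op \to \sS$ and $\tau^\perp:\J^\op \to \T^\perp_A$ for the structure $\V$-functors, the identity $\iota \circ \tilde{B} \circ \sigma = B \circ \sigma = \upsilon = \iota \circ \tau^\perp$ together with the faithfulness of $\iota$ forces $\tilde{B} \circ \sigma = \tau^\perp$, so $\tilde{B}$ is a morphism of $\J$-theories.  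Conversely, any $\J$-theory factorization of $B$ through $\iota$ automatically induces such componentwise factorizations.

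The only point requiring real care is the first one (identifying $\ca{[K,A]} = K$ so that the two strong subobjects of $\U(K,K')$ agree on the nose); once that is settled, the rest is a direct bundling of the universal properties supplied by \bref{thm:existence_of_vcat_talgs} and the faithfulness of $\iota$.
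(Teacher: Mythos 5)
Your proposal is correct and follows essentially the same route as the paper's proof: both reduce the factorization of $B$ through $\T^\perp_A \hookrightarrow \U$ to componentwise factorization through the strong subobjects $\Alg{\T}_\U([K,A],[K',A]) \hookrightarrow \U(K,K')$, identify this with $B_{KK'}$ being valued in $\T$-homomorphisms via the universal property from \bref{thm:existence_of_vcat_talgs}, and then conclude by \bref{thm:cmtn_via_thoms}. The paper leaves implicit the two bookkeeping points you spell out (that $\ca{[K,A]} = K$, and that the componentwise factorizations reassemble into a morphism of $\J$-theories by faithfulness of $\iota$), but these are exactly the right verifications.
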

\begin{proof}
$B$ factors through $\T^\perp_A \hookrightarrow \U$ if and only if each of its components $B_{KK'}$ factors through the subobject
\begin{equation}\label{eq:commutant_incl}\T^\perp_A(K,K') = \Alg{\T}_\U([K,A],[K',A]) \hookrightarrow \U(K,K')\;,\end{equation}
where $[K,A]$ and $[K',A]$ are the pointwise designated cotensors.  This holds if and only if each component $B_{KK'}$ is valued in $\T$-homomorphisms from $[K,A]$ to $[K',A]$, so the result follows from \bref{thm:cmtn_via_thoms}.
\end{proof}

\begin{CorSub}
Given a $\J$-theory $\T$ over $\U$, the commutant $\T^\perp \hookrightarrow \U$ is the largest subtheory of $\U$ that commutes with $\T$.
\end{CorSub}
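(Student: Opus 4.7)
The plan is to deduce this directly from Theorem \bref{thm:commutants_via_commutativity} in two steps: first show that $\T^\perp$ itself commutes with $\T$, and then show that any subtheory of $\U$ commuting with $\T$ must factor through $\T^\perp$.

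For the first step, let $A : \T \to \U$ denote the structural morphism making $\T$ into a theory over $\U$, and let $\iota : \T^\perp \hookrightarrow \U$ denote the canonical subtheory inclusion furnished by \bref{thm:cmtnt_is_subth}. Since $\iota$ trivially factors through itself, the equivalence in \bref{thm:commutants_via_commutativity} (applied with $B = \iota$ and $\sS = \T^\perp$) yields that $A$ commutes with $\iota$. By the symmetry of commutation for pairs of generalized elements \pbref{thm:cmtn_symm}, which is inherited componentwise by morphisms of theories, it follows that $\iota$ commutes with $A$, i.e., the subtheory $\T^\perp$ of $\U$ commutes with $\T$.

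For the maximality step, suppose $\sS$ is any subtheory of $\U$ (with inclusion $j : \sS \hookrightarrow \U$) that commutes with $\T$, meaning the morphism $j : \sS \to \U$ commutes with $A : \T \to \U$. Again by the symmetry of commutation of morphisms of theories, $A$ commutes with $j$. Applying \bref{thm:commutants_via_commutativity} with $B = j$, we obtain a factorization of $j$ through $\iota : \T^\perp \hookrightarrow \U$, which exhibits $\sS$ as a subtheory of $\T^\perp$ over $\U$. Combining the two steps gives the claim.

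I expect no real obstacle here, as the statement is essentially a rephrasing of \bref{thm:commutants_via_commutativity} once one observes that the universal property expressed there characterizes the commutant as the terminal object among subtheories of $\U$ commuting with $\T$. The only point requiring minor care is the symmetry of the commutation relation for morphisms of $\J$-theories, which however reduces immediately to the pointwise symmetry statement \bref{thm:cmtn_symm}.
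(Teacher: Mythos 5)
Your proposal is correct and follows essentially the same route as the paper: both deduce the statement directly from Theorem \bref{thm:commutants_via_commutativity}, observing that a subtheory commutes with $\T$ iff its inclusion factors through $\T^\perp \hookrightarrow \U$, and that $\T^\perp$ itself therefore commutes with $\T$ since its inclusion trivially factors through itself. Your extra care about the symmetry of the commutation relation (via \bref{thm:cmtn_symm}) is a reasonable elaboration of a point the paper leaves implicit, but it does not change the argument.
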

\begin{proof}
By the preceding theorem, a subtheory $\sS \hookrightarrow \U$ commutes with $\T$ if and only if $\sS$ is contained in $\T^\perp$, i.e., iff $\sS \hookrightarrow \U$ factors through $\T^\perp \hookrightarrow \U$.  In particular, $\T^\perp \hookrightarrow \U$ therefore commutes with $\T$.
\end{proof}

\begin{DefSub}\label{def:cmtnt_talg}
Given a $\T$-algebra $A:\T \rightarrow \C$, the \textbf{commutant} $\T^\bot_A$ of $A$ is defined as the commutant of the associated morphism of $\J$-theories $A:\T \rightarrow \C_A$ (where $\C_A$ is the full $\J$-theory of $A$ in $\C$, \bref{def:full_theory}).  Equivalently, $\T^\perp_A$ is the full $\J$-theory of $A$ in $\Alg{\T}_\C$, provided that the latter $\V$-category exists.  By \bref{thm:cmtnt_is_subth}, $\T^\perp_A$ is a strong subtheory of $\C_A$.  By \bref{def:full_theory} we have a fully faithful $\C_A$-algebra $\C_A \rightarrowtail \C$ with carrier $\ca{A}$, and so the composite $\T^\perp_A \hookrightarrow \C_A \rightarrow \C$ is a $\T^\perp_A$-algebra that we call the \textbf{canonical} $\T^\perp_A$-\textbf{algebra}.  Observe that the canonical $\T^\perp_A$-algebra has the same carrier as $A$ itself.
\end{DefSub}

\begin{ExaSub}[\textbf{The commutant of an $R$-module when $\V = \Ab$}]\label{exa:cmtnt_rmod_over_ab}
Let $R$ be a ring and $M$ a left $R$-module $M$.  We can view $M$ equally as an $\R$-algebra for the $\{\ZZ\}$-theory $\R$ corresponding to $R$, and then the  commutant $R^\perp_M := \R^\perp_M$ of $M$ is the commutant of the morphism of rings $R \rightarrow \End_\ZZ(M)$ determined by $M$, where $\End_\ZZ(M)$ denotes the ring of endomorphisms of the abelian group underlying $M$.  Hence $R^\perp_M$ is the subring $\End_R(M)$ of $\End_\ZZ(M)$ consisting of all left $R$-linear maps.
\end{ExaSub}

\begin{ExaSub}[\textbf{The Lawvere theory of left $R$-modules}]\label{exa:cmtnt_lth_rmods}
Let $R$ be a ring or rig, and let $\T = \Mat_R$ be the Lawvere theory of left $R$-modules \pbref{exa:law_th_rmods}.  $R$ itself is a left $R$-module, equivalently, a normal $\T$-algebra, and the corresponding morphism of theories $R:\T \rightarrow \Set_R$ \pbref{def:full_theory} presents $\T$ as a theory over the full finitary theory $\Set_R$ of $R$ in $\Set$.  It is proved in \cite[5.14]{Lu:CvxAffCmt} that the commutant $\T^\perp$ of $\T = \Mat_R$ over $\Set_R$ is (isomorphic to) the theory $\Mat_{R^\op}$ of \textit{right} $R$-modules.
\end{ExaSub}

\begin{RemSub}\label{rem:jalg_str}
Generalizing Lawvere's notion of the \textit{algebraic structure} of a set-valued functor $U:\B \rightarrow \Set$ \cite[III.1]{Law:PhD}, we can define the $\J$-\textbf{algebraic structure} $\textnormal{Str}(U)$ of a $\V$-functor $U:\B \rightarrow \uV$ as the full $\J$-theory of $U$ in the $\V$-functor $\V$-category $[\B,\uV]$, if the latter exists; more generally we, can still similarly define $\textnormal{Str}(U)$ as soon as the objects of $\V$-natural transformations 
$$\textnormal{Str}(U)(J,K) = [\B,\uV]([J,U],[K,U])\;\;\;\;\;\;\;(J,K \in \ob\J)$$
exist, where $[J,U]$ denotes the pointwise cotensor.  The case where $\J = \uV$ was studied by Dubuc \cite{Dub:EnrStrSem}.  Lawvere showed that the \textit{structure} functor $\textnormal{Str}$ is left adjoint to \textit{semantics}---the passage from a theory to its category of algebras, equipped with its canonical functor to $\Set$---and Dubuc established an analogous result in the $\J = \uV$ case.

Note that the notion of commutant intersects with the above notion of $\J$-algebraic structure:  Indeed, the commutant of a $\uV$-valued $\T$-algebra $A:\T \rightarrow \uV$ is equally the $\J$-algebraic structure $\textnormal{Str}(A)$ of $A$.  On the other hand, the notion of commutant applies to $\T$-algebras $A:\T \rightarrow \C$ valued in an arbitrary $\V$-category $\C$, rather than just $\C = \uV$.  Clearly one can immediately generalize the above notion of $\J$-algebraic structure to apply to any such $\C$, but the relation of structure and semantics has not been studied in this context within the literature\footnote{But see Linton's related work \cite{Lin:OutlFuncSem} in the non-enriched context.}.  Furthermore, the theory of commutants has a different character in several respects, as is particularly evident in \S \bref{sec:self_adj_cmtnt_func}.  It is also notable that one has strong general existence results for the commutant of a morphism of $\J$-theories as soon as certain wide intersections and equalizers exist in $\V$ \pbref{thm:existence_of_commutant_via_intersections}, and in the case of a $\uV$-valued $\T$-algebra $A$ we shall establish below a further result to effect that the commutant $\T^\perp_A = \textnormal{Str}(A)$ always exists for many systems of arities $\J$ \pbref{thm:cmtnt_alg_ele} including $\J = \uV$ when $\V$ has equalizers.
\end{RemSub}

Suppose $\T$ is a $\J$-theory for which the $\V$-category of $\T$-algebras in $\T$ exists.

\begin{DefSub}
The \textbf{centre} of the $\J$-theory $\T$ is the commutant of $\T$ in itself, i.e. the commutant $Z(\T) := \T^\perp_{1_\T}$ of the identity morphism on $\T$.  A morphism of $\J$-theories $A:\sS \rightarrow \T$ is \textbf{central} if it commutes with the identity morphism on $\T$.  Hence $A$ is central iff $A$ factors through the centre $Z(\T) \hookrightarrow \T$.  Note that $\T$ is commutative if and only if it is isomorphic to its centre (as a subtheory of $\T$).
\end{DefSub}

\begin{PropSub}\label{thm:unique_morphism_from_jop_is_central}
The unique morphism $\tau:\J^\op \rightarrow \T$ is central.  Therefore, the commutant of $\tau$ is isomorphic to $\T$.
\end{PropSub}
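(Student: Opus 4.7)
The plan is to apply Theorem \bref{thm:cmtn_via_thoms} with $A = \tau$, reducing the question of whether $\tau$ commutes with a morphism $B:\sS \rightarrow \T$ of $\J$-theories to the condition that, for all $K,K' \in \ob\J$, the component $B_{KK'}:\sS(K,K') \rightarrow \T(K,K')$ be valued in $\J^\op$-homomorphisms between the pointwise designated cotensors $[K,\tau]$ and $[K',\tau]$ of the $\J^\op$-algebra $\tau:\J^\op \rightarrow \T$. The key observation is that the carrier of $[K,\tau]$ is $[K,\tau(I)] = [K,I] = K$, since $\tau$ is identity-on-objects and $[K,I] = K$ is the convention for standard designated $\J$-cotensors in a $\J$-theory \pbref{par:des_jcot}.

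Having identified the carriers, Proposition \bref{thm:obj_of_jop_homs} tells us that the object of $\J^\op$-homomorphisms from $[K,\tau]$ to $[K',\tau]$ is the full hom-object $\T(K,K')$ itself, via the identity arrow. Consequently, any morphism with codomain $\T(K,K')$ is automatically valued in $\J^\op$-homomorphisms, and in particular so is $B_{KK'}$ for any morphism of $\J$-theories $B:\sS \rightarrow \T$ whatsoever. Specializing to $B = 1_\T$ yields the centrality of $\tau$.

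For the second assertion, I invoke Theorem \bref{thm:commutants_via_commutativity}: centrality says that $1_\T$ commutes with $\tau$ (using symmetry of commutation \pbref{thm:cmtn_symm} if desired), so $1_\T$ factors through the subtheory inclusion $\iota:(\J^\op)^\perp_\tau \hookrightarrow \T$. Since $\iota$ is identity-on-objects, this factorization exhibits each component $\iota_{JK}$ as a split epimorphism; as it is also a monomorphism, it is an isomorphism, whence $(\J^\op)^\perp_\tau \cong \T$. The only step requiring any care is the carrier computation for the pointwise cotensors $[K,\tau]$; beyond that, the statement is an essentially immediate consequence of Theorems \bref{thm:cmtn_via_thoms} and \bref{thm:commutants_via_commutativity} together with Proposition \bref{thm:obj_of_jop_homs}, so I expect no substantial obstacle.
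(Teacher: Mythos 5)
Your proof is correct, but it takes a genuinely different route from the paper's. The paper's argument is a two-line formal one: $\J^\op$ is the initial object of $\ThJ$ \pbref{def:morph_th}, so the unique morphism $z:\J^\op \rightarrow Z(\T)$ composed with the subtheory embedding $Z(\T) \hookrightarrow \T$ must equal $\tau$ by initiality; hence $\tau$ factors through the centre and is therefore central by definition. That argument needs nothing beyond the existence of the centre and \bref{thm:cmtnt_is_subth}. Your argument instead unwinds the definition of commutation via \bref{thm:cmtn_via_thoms} and the computation of objects of $\J^\op$-homomorphisms in \bref{thm:obj_of_jop_homs}, and the carrier identification $\ca{[K,\tau]} = [K,I] = K$ that you flag as the delicate step is indeed correct (the designated cotensors are standard). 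What your route buys is a stronger intermediate statement proved directly --- that $\tau$ commutes with \emph{every} morphism of $\J$-theories $B:\sS \rightarrow \T$, not just with $1_\T$ --- together with an explicit treatment of the ``Therefore'' clause (the split-epi-plus-mono argument on the components of $(\J^\op)^\perp_\tau \hookrightarrow \T$), which the paper leaves implicit. What it costs is length and reliance on the heavier machinery of \S\bref{sec:cmtn}--\bref{sec:cmtnt}, where the paper gets by with initiality alone.
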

\begin{proof}
There is a unique morphism of $\J$-theories $z:\J^\op \rightarrow Z(\T)$, and since the subtheory embedding $\iota:Z(\T) \hookrightarrow \T$ is a morphism of $\J$-theories, we have $\iota \circ z = \tau$.
\end{proof}

\section{The self-adjoint commutant functor}\label{sec:self_adj_cmtnt_func}

Let $\U$ be a $\J$-theory for which the commutant of each $\J$-theory over $\U$ exists.  For example, this is true for every $\J$-theory $\U$ as soon as $\V$ has equalizers and intersections of $(\ob\J)$-indexed families of strong subobjects \pbref{thm:existence_of_commutant_via_intersections}.

\begin{DefSub}
Let $\ThJ$ denote the category of all $\J$-theories and their morphisms.  We shall denote by $\ThJ \slash \U$ the \textbf{category of} $\J$-\textbf{theories over} $\U$, i.e. the slice category over $\U$ in $\ThJ$.  We denote by $\SubThJ(\U)$ the full subcategory of $\ThJ\slash\U$ consisting of all subtheories of $\U$.
\end{DefSub}

\begin{RemSub}\label{rem:uniq_morph_into_subth}
Observe that for theories $\T$ and $\sS$ over $\U$, if $\sS$ is a subtheory of $\U$ then there is at most one morphism $\T \rightarrow \sS$ in the category over $\J$-theories over $\U$.  In particular, $\SubThJ(\U)$ is therefore a preordered set.  Further, we obtain the following corollary to \bref{thm:commutants_via_commutativity}: 
\end{RemSub}

\begin{PropSub}\label{thm:char_cmt_for_th_over_u}
Let $\sS$ and $\T$ be $\J$-theories over $\U$.  Then $\sS$ and $\T$ commute if and only if there is a (necessarily unique) morphism $\sS \rightarrow \T^{\perp}$ in $\ThJ\slash\U$. 
\end{PropSub}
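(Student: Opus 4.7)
The plan is to derive this essentially as an immediate reformulation of Theorem \bref{thm:commutants_via_commutativity} in the language of the slice category $\ThJ \slash \U$, with uniqueness obtained from Remark \bref{rem:uniq_morph_into_subth}.

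Concretely, write $A:\T \rightarrow \U$ and $B:\sS \rightarrow \U$ for the structure morphisms exhibiting $\T$ and $\sS$ as theories over $\U$. By the definition of commutation of theories over $\U$, $\sS \perp \T$ means exactly that the morphisms $A$ and $B$ commute in the sense of \bref{def:morph_th} (commutation of morphisms of theories). By Theorem \bref{thm:commutants_via_commutativity}, this is equivalent to the statement that $B$ factors through the subtheory embedding $\T^\perp \hookrightarrow \U$, i.e.\ that there exists a morphism of $\J$-theories $\sS \rightarrow \T^\perp$ whose composite with $\T^\perp \hookrightarrow \U$ is $B$. But a morphism $\sS \rightarrow \T^\perp$ in $\ThJ \slash \U$ is precisely such a factorization.

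Finally, the uniqueness of the morphism $\sS \rightarrow \T^\perp$ in $\ThJ \slash \U$ follows from Remark \bref{rem:uniq_morph_into_subth}, since $\T^\perp$ is a subtheory of $\U$ by Proposition \bref{thm:cmtnt_is_subth}.

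There is essentially no obstacle here; the result is a direct translation of \bref{thm:commutants_via_commutativity} into the slice-categorical language, with the only (trivial) wrinkle being to note that factorizations through a subtheory inclusion correspond bijectively to morphisms into that subtheory in $\ThJ \slash \U$, and that such morphisms are unique by \bref{rem:uniq_morph_into_subth}.
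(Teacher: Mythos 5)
Your proof is correct and follows exactly the paper's route: the paper presents this proposition as an immediate corollary of Theorem \bref{thm:commutants_via_commutativity}, with uniqueness supplied by Remark \bref{rem:uniq_morph_into_subth} since $\T^\perp$ is a subtheory of $\U$ by \bref{thm:cmtnt_is_subth}. Nothing is missing.
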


\begin{CorSub}
For each $\J$-theory $\T$ over $\U$, there is a unique morphism
$$\eta_\T:\T \rightarrow \T^{\perp\perp}$$
in $\ThJ\slash\U$.
\end{CorSub}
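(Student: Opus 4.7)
The plan is to apply Proposition \bref{thm:char_cmt_for_th_over_u} to the pair of $\J$-theories $(\T, \T^\perp)$ over $\U$, noting that $\T^\perp$ is indeed a $\J$-theory over $\U$ since it is a (strong) subtheory of $\U$ by Proposition \bref{thm:cmtnt_is_subth}.

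To do so, I first observe that the commutation relation on morphisms of $\J$-theories is symmetric: it is defined componentwise in terms of the commutation relation $\perp$ on generalized elements of hom-objects of $\U$, and the latter is symmetric by Proposition \bref{thm:cmtn_symm}. Next, by the Corollary following Theorem \bref{thm:commutants_via_commutativity}, the commutant $\T^\perp \hookrightarrow \U$ commutes with $\T$; by the symmetry just noted, $\T$ commutes with $\T^\perp$ as well. Hence Proposition \bref{thm:char_cmt_for_th_over_u}, applied with $\sS = \T$ and with $\T$ replaced by $\T^\perp$, yields a (necessarily unique) morphism $\eta_\T:\T \rightarrow (\T^\perp)^\perp = \T^{\perp\perp}$ in $\ThJ\slash\U$, as required.

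The uniqueness is automatic from Proposition \bref{thm:char_cmt_for_th_over_u}, but one can equally observe that $\T^{\perp\perp}$ is a subtheory of $\U$ (again by Proposition \bref{thm:cmtnt_is_subth}) and invoke Remark \bref{rem:uniq_morph_into_subth}. There is no real obstacle here: the entire argument rests on the symmetry of commutation and the universal property of the commutant already established in the previous section.
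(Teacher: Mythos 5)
Your proof is correct and follows essentially the same route as the paper's: the paper likewise observes that $\T$ and $\T^\perp$ commute (via the corollary to Theorem \bref{thm:commutants_via_commutativity}, with symmetry of commutation implicit) and then applies Proposition \bref{thm:char_cmt_for_th_over_u}. Your version merely spells out the symmetry step and the subtheory hypotheses in more detail.
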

\begin{proof}
Since $\T$ and $\T^\perp$ commute, this follows from the preceding Proposition.
\end{proof}

\begin{CorSub}
There is a unique functor $(-)^\perp:(\ThJ\slash\U)^\op \rightarrow \ThJ\slash\U$ that sends each $\J$-theory $\T$ over $\U$ to its commutant $\T^\perp$.
\end{CorSub}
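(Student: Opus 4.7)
The plan is to force both uniqueness and functoriality of $(-)^\perp$ out of Remark \bref{rem:uniq_morph_into_subth}, which asserts that for any theory $\T$ over $\U$ and any subtheory $\sS$ of $\U$ there is at most one morphism $\T \to \sS$ in $\ThJ\slash\U$. Since each $\T^\perp$ is a subtheory of $\U$ (by \bref{thm:cmtnt_is_subth}), this immediately yields uniqueness: the prescribed action on objects determines the hom-sets into which $(-)^\perp$ must send morphisms, and each such hom-set has at most one element, so the action on morphisms is forced. The functor axioms (preservation of identities and composition) then hold automatically, since each axiom is an equation in such a singleton or empty hom-set.

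It remains to exhibit, for each $f:\T \to \sS$ in $\ThJ\slash\U$, an actual morphism $f^\perp:\sS^\perp \to \T^\perp$ in $\ThJ\slash\U$. Write $A:\T \to \U$ and $B:\sS \to \U$ for the structural morphisms, so $B \circ f = A$. By Proposition \bref{thm:char_cmt_for_th_over_u}, it suffices to show that $\sS^\perp$ and $\T$, viewed as theories over $\U$, commute---that is, that the inclusion $\iota:\sS^\perp \hookrightarrow \U$ commutes componentwise with $A$. The corollary to Theorem \bref{thm:commutants_via_commutativity} already tells us that $\iota$ commutes with $B$.

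To pass from this to commutation of $\iota$ with $A = B \circ f$, I would appeal to the straightforward compatibility of the Kronecker products with precomposition: for morphisms $\mu,\nu,g$ in $\V$ of matching sources and targets, \bref{def:commutation} gives
\[ \mu * (\nu \circ g) = (\mu * \nu) \circ (1 \otimes g), \qquad \mu \stt (\nu \circ g) = (\mu \stt \nu) \circ (1 \otimes g), \]
by the functoriality of $\otimes$. Hence for each $J,J',K,K' \in \ob\J$, the equation $\iota_{JJ'} * B_{KK'} = \iota_{JJ'} \stt B_{KK'}$ yields $\iota_{JJ'} * (B \circ f)_{KK'} = \iota_{JJ'} \stt (B \circ f)_{KK'}$, so $\iota \perp A$. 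There is no substantive obstacle here; the only conceptual step is to identify \bref{thm:char_cmt_for_th_over_u} as the right universal property to invoke, after which everything reduces to this one-line compatibility of Kronecker products with composition.
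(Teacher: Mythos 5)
Your proposal is correct, and its overall skeleton matches the paper's: uniqueness and the functor axioms come for free from \bref{rem:uniq_morph_into_subth} because each commutant is a subtheory of $\U$, and existence of the morphism between commutants reduces, via \bref{thm:char_cmt_for_th_over_u}, to showing that the relevant pair of theories over $\U$ commutes. Where you diverge is in how that commutation is established. The paper argues purely formally: given $M:\sS \rightarrow \T$ over $\U$, it composes with the unit $\eta_\T:\T \rightarrow \T^{\perp\perp}$ from the preceding corollary to obtain a morphism $\sS \rightarrow (\T^\perp)^\perp$ over $\U$, and then reads off $\sS \perp \T^\perp$ from \bref{thm:char_cmt_for_th_over_u} itself (the standard Galois-connection manoeuvre), never touching Kronecker products. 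You instead verify the commutation componentwise: starting from the fact (the corollary to \bref{thm:commutants_via_commutativity}) that $\sS^\perp \hookrightarrow \U$ commutes with $\sS \rightarrow \U$, you deduce that it commutes with the composite $\T \rightarrow \sS \rightarrow \U$ using the identity $\mu * (\nu \cdot g) = (\mu * \nu)\cdot(1 \otimes g)$ and its analogue for $\stt$, which indeed follows immediately from \bref{def:commutation} and functoriality of $\otimes$. This stability of the commutation relation under precomposition of generalized elements is not stated explicitly in the paper (its \bref{thm:basic_props_of_cmmtn_morphs} concerns postcomposition into a larger theory), so you are supplying a small auxiliary lemma; it is true and one line to check. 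The trade-off is that the paper's route is shorter and uses only the universal property already in hand, while yours is more concrete and makes visible exactly why commutation is inherited along a morphism of theories over $\U$. Both are valid proofs.
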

\begin{proof}
Given a morphism $M:\sS \rightarrow \T$ in $\ThJ\slash\U$, we obtain a composite morphism
$$\sS \xrightarrow{M} \T \xrightarrow{\eta_{\T}} \T^{\perp\perp}$$
in $\ThJ\slash\U$, so by \bref{thm:char_cmt_for_th_over_u} we deduce that $\sS$ commutes with $\T^\perp$, so $\T^\perp$ commutes with $\sS$ and hence, by \bref{thm:char_cmt_for_th_over_u} again, there is a unique morphism
$$M^\perp:\T^\perp \rightarrow \sS^\perp$$
in $\ThJ\slash\U$.  In other words, $\T^\perp \lt \sS^\perp$ in the preorder $\SubThJ(\U)$, and the result follows.
\end{proof}

\begin{ThmSub}\label{thm:adjn}
There is an adjunction
$$\Adjn{\ThJ\slash\U}{(-)^\perp}{(-)^\perp}{}{}{(\ThJ\slash\U)^\op}$$
between the category of $\J$-theories over $\U$ and its opposite, in which both the left and right adjoints are given by the same contravariant functor $(-)^\perp$, which sends a $\J$-theory $\T$ over $\U$ to its commutant $\T^\perp$.
\end{ThmSub}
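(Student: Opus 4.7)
The plan is to exhibit the required natural bijection
$$\ThJ\slash\U(\sS, \T^\perp) \;\cong\; \ThJ\slash\U(\T, \sS^\perp)$$
for $\sS, \T \in \ThJ\slash\U$, which is exactly the hom-set isomorphism witnessing the claimed adjunction $(-)^\perp \dashv (-)^\perp$ between $\ThJ\slash\U$ and its opposite.

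First I would use the fact that both $\T^\perp$ and $\sS^\perp$ are subtheories of $\U$, by \bref{thm:cmtnt_is_subth}. Consequently, by \bref{rem:uniq_morph_into_subth}, each of the two hom-sets $\ThJ\slash\U(\sS, \T^\perp)$ and $\ThJ\slash\U(\T, \sS^\perp)$ contains at most one element. Therefore any bijection between them is automatically natural in both variables, and the problem reduces to showing that the two hom-sets are simultaneously empty or simultaneously a singleton.

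Next I would apply \bref{thm:char_cmt_for_th_over_u}: the left hom-set is nonempty precisely when $\sS$ and $\T$ commute (as theories over $\U$), and the right hom-set is nonempty precisely when $\T$ and $\sS$ commute. By the symmetry of the commutation relation (\bref{thm:cmtn_symm}, extended to morphisms of $\J$-theories in the evident way), these conditions are equivalent, giving the required bijection.

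I do not expect any serious obstacle here: all of the real content has been packaged into the preceding propositions, and the only subtlety is verifying naturality, which is handled for free by the observation that the hom-sets are subsingletons. If one wished to make naturality entirely explicit, one would note that for morphisms $M:\sS' \to \sS$ and $N:\T' \to \T$ in $\ThJ\slash\U$, both squares of the naturality diagram involve targets that are hom-sets into subtheories of $\U$, so commutativity is forced.
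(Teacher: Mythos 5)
Your proof is correct and follows essentially the same route as the paper's: both arguments rest on \bref{thm:char_cmt_for_th_over_u}, the symmetry of the commutation relation, and the observation \pbref{rem:uniq_morph_into_subth} that hom-sets into subtheories of $\U$ are subsingletons. The paper packages this as the verification that $\eta_\T:\T \rightarrow \T^{\perp\perp}$ is a universal arrow for the putative right adjoint rather than as the hom-set bijection $\ThJ\slash\U(\sS,\T^\perp) \cong \ThJ\slash\U(\T,\sS^\perp)$, but the content is identical, and in both formulations the naturality (resp.\ universality) condition is automatic because the relevant hom-sets have at most one element.
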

\begin{proof}
It suffices to show that $(\T^\perp,\eta_\T:\T \rightarrow \T^{\perp\perp})$ is a universal arrow for the putative right adjoint $(-)^\perp$.  Indeed, given a morphism $M:\T \rightarrow \sS^\perp$ in $\ThJ\slash\U$, we know by \bref{thm:char_cmt_for_th_over_u} that $\T \perp \sS$, so $\sS \perp \T$ and hence there is a unique morphism $\widetilde{M}:\sS \rightarrow \T^\perp$ in $\ThJ\slash\U$.  Further, $\widetilde{M}^\perp \cdot \eta_\T$ and $M$ are both morphisms $\T \rightarrow \sS^\perp$ in $\ThJ\slash\U$ and so, by \bref{rem:uniq_morph_into_subth}, are equal.
\end{proof}

Recall that the term \textbf{Galois connection} is an alias for the notion of adjunction for preordered sets, especially when one of the two preorders involved is presented as a dual.

\begin{CorSub}\label{thm:galois_connection}
Suppose that the system of arities $\J \hookrightarrow \uV$ admits $\V$-categories of algebras, and let $\U$ be a $\J$-theory.  Then there is a Galois connection
$$\Adjn{\SubThJ(\U)}{(-)^\perp}{(-)^\perp}{}{}{\SubThJ(\U)^\op}$$
on the preordered set $\SubThJ(\U)$ of subtheories of $\U$, given by taking the commutant $\T^\perp$ of each subtheory $\T$ of $\U$.
\end{CorSub}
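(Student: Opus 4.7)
The plan is to obtain this Galois connection as a direct restriction of the adjunction already established in Theorem \bref{thm:adjn}. First, the hypothesis that $\J \hookrightarrow \uV$ admits $\V$-categories of algebras guarantees in particular that $\Alg{\T}_\U$ exists for every $\J$-theory $\T$ over $\U$, and hence via the defining formula \eqref{eq:cmtnt_hom} that the commutant $\T^\perp$ exists for every such $\T$. This places us in the setting of \S\bref{sec:self_adj_cmtnt_func}, so Theorem \bref{thm:adjn} supplies an adjunction $(-)^\perp \dashv (-)^\perp : (\ThJ\slash\U)^\op \to \ThJ\slash\U$.

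Next, I would observe that Proposition \bref{thm:cmtnt_is_subth} says that $\T^\perp$ is a strong subtheory of $\U$ for every $\T \in \ThJ\slash\U$, so the commutant functor factors through the full subcategory inclusion $\SubThJ(\U) \hookrightarrow \ThJ\slash\U$. In particular it restricts to a contravariant endofunctor $(-)^\perp : \SubThJ(\U)^\op \to \SubThJ(\U)$, which is the functor whose self-adjointness we wish to establish.

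Finally, to verify the Galois-connection condition it suffices, by Remark \bref{rem:uniq_morph_into_subth} and the fact that $\SubThJ(\U)$ is a preordered set, to show that for subtheories $\sS, \T$ of $\U$ one has $\sS \leq \T^\perp$ if and only if $\T \leq \sS^\perp$. By Proposition \bref{thm:char_cmt_for_th_over_u} the first inequality is equivalent to $\sS \perp \T$ and the second to $\T \perp \sS$, and these are equivalent by the symmetry of commutation (Proposition \bref{thm:cmtn_symm}, applied componentwise to the morphisms of theories in question). There is no substantive obstacle; the corollary is purely a bookkeeping consequence of \bref{thm:adjn}, \bref{thm:cmtnt_is_subth}, and the symmetry of $\perp$, once one notes that the universal property of the adjunction collapses into an order-equivalence because of \bref{rem:uniq_morph_into_subth}.
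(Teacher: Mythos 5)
Your proposal is correct and follows exactly the route the paper intends: the corollary is stated without proof as an immediate restriction of the adjunction of Theorem \bref{thm:adjn} to the full subcategory $\SubThJ(\U)$, which is legitimate precisely because \bref{thm:cmtnt_is_subth} shows the commutant functor lands in subtheories and \bref{rem:uniq_morph_into_subth} makes $\SubThJ(\U)$ a preorder. Your additional direct verification of the condition $\sS \leq \T^\perp \Leftrightarrow \T \leq \sS^\perp$ via \bref{thm:char_cmt_for_th_over_u} and the symmetry of $\perp$ is a correct (and slightly more self-contained) way of saying the same thing.
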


\begin{DefSub}
Let $\T$ be a $\J$-theory over $\U$.
\begin{enumerate}
\item $\T$ is said to be \textbf{saturated} if $\T^{\perp\perp} \cong \T$ as theories over $\U$.
\item $\T$ is said to be \textbf{balanced} if $\T^\perp \cong \T$ as theories over $\U$.
\end{enumerate}
\end{DefSub}

\begin{RemSub}
The following are immediate consequences of the definitions:
\begin{enumerate}
\item A saturated $\J$-theory $\T$ over $\U$ is necessarily a subtheory of $\U$.
\item Any balanced $\J$-theory $\T$ over $\U$ is necessarily saturated.
\end{enumerate}
Hence we refer to saturated (resp. balanced) $\J$-theories over $\U$ equally as \textbf{saturated subtheories} (resp. \textbf{balanced subtheories}) of $\U$.
\end{RemSub}

\begin{RemSub}
We say that a subtheory $\T$ of $\U$ is commutative if $\T$ is commutative as a $\J$-theory.  Observe that by \bref{thm:basic_props_of_cmmtn_morphs}, a subtheory $\T$ of $\U$ is commutative if and only if the given embedding $\T \hookrightarrow \U$ commutes with itself, equivalently, iff $\T \lt \T^\perp$ as subtheories of $\U$.  Hence we deduce the following:
\end{RemSub}

\begin{PropSub}\label{thm:bal_impl_comm_sat}
Any balanced $\J$-theory $\T$ over $\U$ is necessarily a commutative, saturated subtheory of $\U$.
\end{PropSub}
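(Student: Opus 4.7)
The plan is to extract commutativity of $\T$ from the isomorphism $\T \cong \T^\perp$ via the characterization of commutativity recorded in the remark immediately preceding the proposition: a subtheory $\T$ of $\U$ is commutative if and only if $\T \leqslant \T^\perp$ in the preordered set $\SubThJ(\U)$. The fact that $\T$ is saturated and is a subtheory of $\U$ is essentially automatic from the two numbered remarks just above the proposition, so the only real content is the commutativity.

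First I would invoke part~2 of the remark to conclude that $\T$ is saturated, and then invoke part~1 to conclude that $\T$ is a subtheory of $\U$, which provides the setting in which the characterization of commutativity applies. At this point the given isomorphism $\T \cong \T^\perp$ is an isomorphism in $\ThJ\slash\U$ between two subtheories of $\U$ (recalling from \bref{thm:cmtnt_is_subth} that commutants are always subtheories of the codomain theory).

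Next I would use \bref{rem:uniq_morph_into_subth}, which says that the hom-sets between objects of $\SubThJ(\U)$ have at most one element, to conclude that the isomorphism $\T \cong \T^\perp$ in $\ThJ\slash\U$ forces the two inequalities $\T \leqslant \T^\perp$ and $\T^\perp \leqslant \T$ in $\SubThJ(\U)$. In particular, $\T \leqslant \T^\perp$, so by the characterization of commutativity in the remark before the proposition, $\T$ is commutative.

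There is no serious obstacle here; the main delicate point is just to make sure that the isomorphism $\T \cong \T^\perp$ in the slice category $\ThJ\slash\U$ really does descend to the corresponding inequality $\T \leqslant \T^\perp$ in the preorder $\SubThJ(\U)$, which is precisely what \bref{rem:uniq_morph_into_subth} guarantees once we know both $\T$ and $\T^\perp$ are subtheories of $\U$.
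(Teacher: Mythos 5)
Your proposal is correct and follows exactly the route the paper intends: the paper states this proposition as an immediate deduction from the two preceding remarks (balanced $\Rightarrow$ saturated $\Rightarrow$ subtheory, and commutativity of a subtheory $\T$ of $\U$ is equivalent to $\T \leqslant \T^\perp$), which is precisely the chain of observations you spell out. The only thing you add is the explicit justification, via \bref{rem:uniq_morph_into_subth} and \bref{thm:cmtnt_is_subth}, that the isomorphism $\T \cong \T^\perp$ over $\U$ yields the inequality $\T \leqslant \T^\perp$ in $\SubThJ(\U)$, which is a reasonable point to make explicit.
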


\begin{ExaSub}[\textbf{Maximal commutative subrings as balanced subtheories}]
Let $R$ be a subring of a ring $U$.  Taking $\V = \Ab$, let $\U$ denote the $\{\ZZ\}$-theory corresponding to $U$.  Then the subtheory $\R \hookrightarrow \U$ corresponding to $R$ is balanced if and only if $R$ is equal to its own centralizer $C_U(R)$ in $U$.  It is well-known (and easy to prove) that this is the case if and only if $R$ is a \textit{maximal commutative subring} of $U$, i.e. a maximal element of the poset of commutative subrings of $U$ under inclusion.
\end{ExaSub}

\begin{ExaSub}[\textbf{Double centralizers of left $R$-modules}]
Let $M$ be a left $R$-module for a ring $R$.  Taking $\V = \Ab$ and letting $\R$ denote the $\{\ZZ\}$-theory corresponding to $R$, the $\R$-algebra $M$ determines a morphism of $\{\ZZ\}$-theories $\R \rightarrow \Ab_M$, which is simply the canonical ring homomorphism $R \rightarrow \End_\ZZ(M)$ induced by $M$.  Thus regarding $\R$ as a $\{\ZZ\}$-theory over $\Ab_M$, we deduce by \bref{exa:cmtnt_rmod_over_ab} that the double commutant $\R^{\perp\perp}$ over $\Ab_M$ is precisely the \textit{double centralizer of $M$} in the sense of \cite{DlRi}, i.e. the centralizer of the subring $\End_R(M) \hookrightarrow \End_\ZZ(M)$.  Hence $\R$ is saturated over $\Ab_M$ if and only if the left $R$-module $M$ is faithful and has the \textit{double centralizer property} in the sense of \cite{DlRi}.  The reader is warned that our use of the term \textit{balanced} for $\J$-theories does not accord with the use of this term in ring theory, where it is sometimes used to refer to $R$-modules with the double centralizer property.
\end{ExaSub}

\begin{ExaSub}[\textbf{The opposite ring as a commutant}]\label{exa:crings_bal_subth}
Letting $R$ be a ring and taking $\V = \Ab$, we can regard $R$ as a $\{\ZZ\}$-theory.  The endomorphism ring $\End_\ZZ(R)$ is the full $\{\ZZ\}$-theory $\Ab_R$ of $R$ in $\Ab$.  Since $R$ is a left $R$-module, we have a canonical ring homomorphism $R \rightarrow \End_\ZZ(R)$.  Thus regarding $R$ as a $\{\ZZ\}$-theory over $\End_\ZZ(R)$, the commutant $R^\perp$ of $R$ is the subring $\End_R(R) \hookrightarrow \End_\ZZ(R)$.  On the other hand, since $R$ is also a right $R$-module we have an injective ring homomorphism $R^\op \rightarrow \End_\ZZ(R)$ whose image is precisely $\End_R(R) = R^\perp$, so that $R^\perp \cong R^\op$ as $\{\ZZ\}$-theories over $\End_\ZZ(R)$.  Applying this result also to the ring $R^\op$, we find that $R$ is necessarily saturated when regarded as a $\{\ZZ\}$-theory over $\End_\ZZ(R)$.   Moreover, we claim that $R$ is a balanced $\{\ZZ\}$-theory over $\End_\ZZ(R)$ if and only if $R$ is commutative.  Indeed, if $R$ is a commutative ring then $R = R^\op \cong R^\perp$ as $\{\ZZ\}$-theories over $\End_\ZZ(R)$.  Conversely, if $R$ is a balanced $\{\ZZ\}$-theory over $\End_\ZZ(R)$ then $R$ is a commutative ring by \bref{thm:bal_impl_comm_sat} and \bref{exa:crings_comm_zth}.
\end{ExaSub}

\begin{ExaSub}[\textbf{The Lawvere theories of left and right} $R$-\textbf{modules}]\label{exa:lr_mods_cmtnts}
Any ring or rig $R$ can be viewed as a left $R$-module and so determines a morphism $\Mat_R \rightarrow \Set_R$ from the Lawvere theory of left $R$-modules $\Mat_R$ into the full finitary theory $\Set_R$ of $R$ in $\Set$.  $R$ is also a right $R$-module (equivalently, a left $R^\op$-module) and hence also determines a morphism $\Mat_{R^\op} \rightarrow \Set_R$.  It is proved in \cite[6.5]{Lu:CvxAffCmt} that $\Mat_R$ and $\Mat_{R^\op}$ are commutants of one another over $\Set_R$.  In particular, $\Mat_R$ is a saturated subtheory of $\Set_R$, and this subtheory is balanced if and only if $R$ is commutative \cite[6.5]{Lu:CvxAffCmt}.
\end{ExaSub}

\begin{ExaSub}
By \bref{exa:lr_mods_cmtnts}, the Lawvere theory of join semilattices $\Mat_2$ \pbref{exa:slats} is a balanced subtheory of the Lawvere theory of Boolean algebras $\Set_2$ \pbref{exa:bool}.
\end{ExaSub}

\begin{ExaSub}[\textbf{A non-saturated subtheory}]
Let $k$ be an infinite integral domain, and let $\T$ be the Lawvere theory of commutative $k$-algebras \pbref{exa:lth_ckalgs}.  $k$ itself is a commutative $k$-algebra and so determines a morphism of Lawvere theories $\T \rightarrow \Set_k$ into the full finitary theory $\Set_k$ of $k$ in $\Set$.  This morphism presents $\T$ as a subtheory of $\Set_k$, but this subtheory is not saturated \cite[6.7]{Lu:CvxAffCmt}.  Indeed, $\T^\perp \cong \FinCard^\op$ over $\Set_k$ and consequently $\T^{\perp\perp} \cong \Set_k \not\cong \T$ \cite[6.7]{Lu:CvxAffCmt}.
\end{ExaSub}

\begin{ExaSub}[\textbf{The theories of affine and convex spaces}]
Let $R$ be ring or rig.  By definition, a \textit{pointed right $R$-module} is a right $R$-module $M$ equipped with an arbitrary chosen element $* \in M$.  The category of pointed right $R$-modules (with right $R$-linear maps preserving the chosen points) is isomorphic to the category of normal $\T$-algebras $\Alg{\T}^!$ for a certain Lawvere theory $\T = \Mat_{R^\op}^*$ \cite[7.1]{Lu:CvxAffCmt}.  $R$ itself is a pointed right $R$-module with chosen point $1 \in R$ and so determines a morphism of Lawvere theories $\Mat_{R^\op}^* \rightarrow \Set_R$ into the full finitary theory of $R$ in $\Set$.  Similarly considering the theory of left $R$-affine spaces $\Mat_R^\aff$ \pbref{exa:raff} as a theory over $\Set_R$ via the morphism $\Mat_R^\aff \rightarrow \Set_R$ determined by the left $R$-affine space $R$, it is proved in \cite[7.2]{Lu:CvxAffCmt} that $\Mat_R^\aff$ is the commutant of $\Mat_{R^\op}^*$ over $\Set_R$.  In particular, $\Mat_R^\aff$ is therefore a saturated subtheory of $\Set_R$.  Further, it is proved in \cite[9.3]{Lu:CvxAffCmt} that if $R$ is a \textit{ring} then the theories $\Mat_R^\aff$ and $\Mat_{R^\op}^*$ are commutants of one another over $\Set_R$.  However for rigs $R$ that are not rings this need not hold; for example, when $R$ is the rig $2$ of \bref{exa:unb_slat}, the commutant over $\Set_2$ of the theory $\Mat_2^\aff$ of unbounded join semilattices is the theory of \textit{join semilattices with top element} \cite[8.2]{Lu:CvxAffCmt}.  Nevertheless, for the commutative rig $\RR_+$ of non-negative reals, the theory $\Mat_{\RR_+}^\aff$ of $\RR$-convex spaces \pbref{exa:cvx_sp} and the theory $\Mat_{\RR_+}^*$ of pointed $\RR_+$-modules are commutants of one another over $\Set_{\RR_+}$ \cite[10.20, 10.21]{Lu:CvxAffCmt}.
\end{ExaSub}

\section{The reduction to single-output operations}

By definition, morphisms of theories $A,B$ commute iff $A_{JJ'},B_{KK'}$ commute for all objects $J,J',K,K'$ of $\J$, but we now show that we can fix $J' = I$ and $K' = I$ and still obtain an equivalent condition.

\begin{LemSub}\label{thm:lem_red_single_outp_ops}
Let $A:\T \rightarrow \U$ and $B:\sS \rightarrow \U$ be morphisms of $\J$-theories, and let $K,K' \in \ob\J$.  Then the following are equivalent:
\begin{enumerate}
\item For all $J,J' \in \ob\J$, $A_{JJ'} \perp B_{KK'}$.
\item For all $J \in \ob\J$, $A_{JI} \perp B_{KK'}$.
\end{enumerate}
\end{LemSub}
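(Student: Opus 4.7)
The implication (1) $\Rightarrow$ (2) is immediate by specializing $J' = I$, so the content lies in showing that (2) $\Rightarrow$ (1).

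The plan is to exploit the correspondence between commutation of morphisms of $\J$-theories and the property of being valued in $\T$-homomorphisms, as established by Lemma \bref{thm:lemma_on_cmtn_vs_thoms} and Theorem \bref{thm:cmtn_via_thoms}. Specifically, examining the proofs of those results, one sees that they proceed pointwise in $J, J', K, K'$ and yield the following strengthened assertion: for each fixed pair $K, K' \in \ob\J$, the statement ``$A_{JJ'} \perp B_{KK'}$'' is equivalent to the statement ``$B_{KK'}$ preserves $\T$-operations of input arity $J$ and output arity $J'$ with respect to the pointwise designated cotensors $[K,A]$ and $[K',A]$,'' in the sense of \bref{def:pres_ops}. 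Granting this, condition (1) for our fixed $K, K'$ amounts to saying that $B_{KK'}:\sS(K,K') \rightarrow \U(K,K')$ preserves $\T$-operations of every input and output arity, i.e. is valued in $\T$-homomorphisms from $[K,A]$ to $[K',A]$ in the sense of \bref{def:fam_t_homs}. Symmetrically, condition (2) amounts to saying that $B_{KK'}$ preserves $\T$-operations of every input arity $J$ and the fixed output arity $I$.

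The reduction from (2) to (1) is then precisely the content of Proposition \bref{thm:valued_in_thoms_quantifying_over_just_obj}: a morphism $f:V \rightarrow \C(\ca{A},\ca{B})$ is valued in $\T$-homomorphisms as soon as the equations $\Phi_{JI} \cdot f = \Psi_{JI} \cdot f$ hold for all $J \in \ob\J$. Applying this to $f = B_{KK'}$, $\C = \U$, and the $\T$-algebras $[K,A]$ and $[K',A]$ in place of $A$ and $B$ converts (2) into the full statement (1).

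The main obstacle, insofar as there is one, is bookkeeping: one must confirm that the equivalence between ``$A_{JJ'} \perp B_{KK'}$'' and the preservation-of-operations condition holds pointwise in $J, J'$, which requires a brief check that the argument of \bref{thm:lemma_on_cmtn_vs_thoms} together with the isomorphism-of-algebras trick at the end of the proof of \bref{thm:cmtn_via_thoms} does not rely on quantifying over $J$ or $J'$. Once that is verified, the lemma follows by a direct appeal to \bref{thm:valued_in_thoms_quantifying_over_just_obj}.
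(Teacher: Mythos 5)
Your proposal is correct and is essentially the paper's own argument: the pointwise form of Lemma \bref{thm:lemma_on_cmtn_vs_thoms} converts each commutation statement $A_{JJ'}\perp B_{KK'}$ into preservation of $\T$-operations of input arity $J$ and output arity $J'$, and Proposition \bref{thm:valued_in_thoms_quantifying_over_just_obj} then collapses the quantification over output arities to the single case $J'=I$. The only (cosmetic) difference is that the paper applies \bref{thm:valued_in_thoms_quantifying_over_just_obj} directly to $\theta_{KK'}\cdot B_{KK'}$ and the right cotensors $[K,A]_r,[K',A]_r$, i.e.\ it stays entirely within the setting of \bref{thm:lemma_on_cmtn_vs_thoms} and closes by a second application of that lemma with $J'=I$, so the ``brief check'' you defer (that the isomorphism-of-algebras transfer to the designated cotensors works pointwise in $J,J'$) is not actually needed.
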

\begin{proof}
By \bref{thm:lemma_on_cmtn_vs_thoms}, 1 holds if and only if $\theta_{KK'} \cdot B_{KK'}$ is valued in $\T$-homomorphisms from $[K,A]_r$ to $[K',A]_r$, and by \bref{thm:valued_in_thoms_quantifying_over_just_obj} this is equivalent to the statement that for every $J \in \ob\J$, $\theta_{KK'} \cdot B_{KK'}$ preserves $\T$-operations of input arity $J$ and output arity $I$.  But by another application of \bref{thm:lemma_on_cmtn_vs_thoms} this is equivalent to 2.
\end{proof}

\begin{ThmSub}\label{thm:comm_mor_th_via_single_outp_ops}
Let $A:\T \rightarrow \U$ and $B:\sS \rightarrow \U$ be morphisms of $\J$-theories.  Then $A$ and $B$ commute if and only if $A_{JI}$ commutes with $B_{KI}$ for all objects $J,K$ of $\J$.
\end{ThmSub}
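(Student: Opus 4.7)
The forward ("only if") direction is immediate from the definition of commutation of morphisms of $\J$-theories, so the content is in the converse. My plan is to apply Lemma \ref{thm:lem_red_single_outp_ops} twice, using the symmetry of the commutation relation (Proposition \ref{thm:cmtn_symm}) to interpolate between the two applications. The strategy is that Lemma \ref{thm:lem_red_single_outp_ops} only reduces the quantifier on the output arity of operations coming from $\T$ (it leaves the pair $(K,K')$ on the $B$-side fixed), so to also reduce the output arity on the $\sS$-side we must first flip the roles of $A$ and $B$, apply the lemma, and then flip back.

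In detail: assume that $A_{JI} \perp B_{KI}$ for all $J,K \in \ob\J$. First, I will extend the commutation hypothesis in the $\sS$-direction. Fix $J \in \ob\J$ arbitrarily. By Proposition \ref{thm:cmtn_symm}, the assumption yields $B_{KI} \perp A_{JI}$ for all $K \in \ob\J$. Applying Lemma \ref{thm:lem_red_single_outp_ops} with the roles of $(A,\T)$ and $(B,\sS)$ interchanged, taking the "fixed pair" of the lemma to be $(J,I)$, we conclude that $B_{KK'} \perp A_{JI}$ for all $K,K' \in \ob\J$. Flipping back via Proposition \ref{thm:cmtn_symm} gives $A_{JI} \perp B_{KK'}$ for all $J,K,K' \in \ob\J$.

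Second, I will extend in the $\T$-direction. Now fix $K,K' \in \ob\J$ arbitrarily. The previous step supplies $A_{JI} \perp B_{KK'}$ for all $J$, so a direct application of Lemma \ref{thm:lem_red_single_outp_ops} (with this $K,K'$ as the fixed pair) yields $A_{JJ'} \perp B_{KK'}$ for all $J,J' \in \ob\J$. Since $K,K'$ were arbitrary, this is precisely the condition that $A$ commutes with $B$.

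No step poses a real obstacle: the substantive work has already been done inside Lemma \ref{thm:lem_red_single_outp_ops} (via \bref{thm:lemma_on_cmtn_vs_thoms} and \bref{thm:valued_in_thoms_quantifying_over_just_obj}), and the only subtle bookkeeping here is simply making sure the symmetric reformulation via $\perp$ is valid before reapplying the lemma. The mild conceptual point worth emphasizing in the write-up is that Lemma \ref{thm:lem_red_single_outp_ops} is not itself symmetric in $A$ and $B$, so the symmetry of $\perp$ is genuinely needed to obtain the two-sided reduction.
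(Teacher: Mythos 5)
Your proposal is correct and follows essentially the same route as the paper: two applications of Lemma \bref{thm:lem_red_single_outp_ops} bridged by the symmetry of $\perp$ from Proposition \bref{thm:cmtn_symm}, with the roles of $A$ and $B$ exchanged for one of the applications. The paper merely presents the argument as a chain of equivalences rather than as a one-directional build-up, so there is no substantive difference.
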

\begin{proof}
By \bref{thm:lem_red_single_outp_ops}, $A$ commutes with $B$ if and only if $A_{JI} \perp B_{KK'}$ for all $J,K,K' \in \ob\J$.  By now using the symmetry of $\perp$ and exchanging the roles of $A$ and $B$, we can invoke \bref{thm:lem_red_single_outp_ops} again to deduce that $A$ commutes with $B$ if and only if $B_{KI} \perp A_{JI}$ for all $J,K \in \ob\J$.
\end{proof}

Whereas commutation of morphisms of theories is defined in terms of the Kronecker products $\kk{JJ'KK'}$ and $\kt{JJ'KK'}$, the preceding theorem entails that just the Kronecker products with $J' = I = K'$ suffice, and the form of these can be simplified considerably, as follows.

\begin{DefSub}\label{def:kp_so}
Given a $\J$-theory $\T$ and objects $J,K$ of $\J$, the first and second \textbf{Kronecker products of single-output operations} of arities $J$ and $K$ are defined as
$$\kk{JK} := \left(\T(J,I)\otimes\T(K,I) \xrightarrow{[K,-]_{JI}\otimes 1} \T(J\otimes K,K)\otimes\T(K,I) \xrightarrow{c} \T(J\otimes K,I)\right),$$
$$\kt{JK} := \left(\T(J,I)\otimes\T(K,I) \xrightarrow{1\otimes [J,-]_{KI}} \T(J,I)\otimes\T(J \otimes K,J) \xrightarrow{c} \T(J\otimes K,I)\right),$$
where $c$ denotes the relevant composition morphism, $[K,-]_{JI}$ denotes the morphism induced by the cotensors $[K,J]_r = J\otimes K$ and $[K,I] = K$ per \bref{par:cot}, and $[J,-]_{KI}$ denotes the morphism induced by the cotensors $[J,K]_\ell = J\otimes K$ and $[J,I] = J$.
\end{DefSub}

\begin{PropSub}
Given a $\J$-theory $(\T,\tau)$, the diagram
$$
\xymatrix{
\T(J,I)\otimes\T(K,I) \ar[dr]_{\kk{JK}}\ar[r]^(.53){\kk{JIKI}} & \T(J\otimes K,I\otimes I) \ar[d]^{\T(1,\tau(\ell^{-1}_I))}_\wr\\
& \T(J\otimes K,I)
}
$$
commutes, where the right side is the isomorphism determined by the canonical isomorphism $\ell^{-1}_I = r^{-1}_I:I \rightarrow I\otimes I$ in $\J$.  Further, the similar diagram obtained by substituting $\kt{}$ for $\kk{}$ also commutes.
\end{PropSub}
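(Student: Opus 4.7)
The plan is to unfold both $\kk{JK}$ and $\T(1, \tau(\ell_I^{-1})) \cdot \kk{JIKI}$ via the definitions in \bref{def:kp_so} and \bref{def:commutation}, and then reconcile them by invoking Corollary \bref{thm:cots_of_same_obj} to compare the different cotensor structures used, together with the $\V$-functoriality of composition $c$ in $\T$.  The latter supplies the identities
\[ \T(1, h) \cdot c \;=\; c \cdot (1 \otimes \T(1, h)) \qquad\text{and}\qquad c \cdot (1 \otimes \T(h, 1)) \;=\; c \cdot (\T(1, h) \otimes 1) \]
for any composable $h$, which together allow one to migrate post-compositions freely across a Kronecker product.

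The essential observation is that $\kk{JIKI}$ and $\kk{JK}$ differ only in their choice of cotensor structures: $\kk{JIKI}$ uses the right and left cotensors $[K, I]_r = I \otimes K$, $[I, K]_\ell = I \otimes K$, and $[I, I]_\ell = I \otimes I$, whereas $\kk{JK}$ uses the designated cotensors $[K, I] = K$, $[I, K] = K$, and $[I, I] = I$.  A direct unwinding of the counit formulas in \bref{par:lrdes_cots} (in particular \eqref{eq:cot_counit_gammaj} and \eqref{eq:direct_charn_right_cot}), together with the coherence of $\ell$ and $s$, identifies the induced isomorphisms supplied by Corollary \bref{thm:cots_of_same_obj} as the morphisms $\tau(\ell_K^{-1}): I \otimes K \to K$ and $\tau(\ell_I^{-1}): I \otimes I \to I$ in $\T$.

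Applying Corollary \bref{thm:cots_of_same_obj} with $V = K$, $D_1 = J$, $D_2 = I$ (the cotensor chosen for $J$ is $[K, J]_r = J \otimes K$ in both cases, while for $I$ we compare $[K, I]_r = I \otimes K$ with $[K, I] = K$) yields $[K, -]_{JI} = \T(1, \tau(\ell_K^{-1})) \cdot ([K, -]_r)_{JI}$.  Applying the same corollary with $V = I$, $D_1 = K$, $D_2 = I$ (noting that the corresponding designated-cotensor map is the identity $\T(K, I) \to \T(K, I)$) yields $([I, -]_\ell)_{KI} = \T(\tau(\ell_K^{-1}), \tau(\ell_I))$; post-composing with $\T(1, \tau(\ell_I^{-1}))$ and simplifying via $\tau(\ell_I^{-1}) \cdot \tau(\ell_I) = \tau(1_I) = 1_I$ in $\T$ then gives $\T(1, \tau(\ell_I^{-1})) \cdot ([I, -]_\ell)_{KI} = \T(\tau(\ell_K^{-1}), 1_I)$.

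Combining these via the $\V$-functoriality identities for $c$, applied twice to migrate the unitor-based morphisms across the tensor product, one computes
\[ \T(1, \tau(\ell_I^{-1})) \cdot \kk{JIKI} \;=\; c \cdot \bigl(([K, -]_r)_{JI} \otimes \T(\tau(\ell_K^{-1}), 1_I)\bigr) \;=\; c \cdot \bigl([K, -]_{JI} \otimes 1_{\T(K, I)}\bigr) \;=\; \kk{JK}, \]
establishing the first commutation.  The $\kt{}$ case is obtained by the symmetric argument: exchange left and right cotensors, replace diagrammatic composition by textual composition, and use the right unitor $r$ in place of $\ell$ (with $r_I = \ell_I$ on the unit $I$ by symmetric monoidal coherence).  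The main subtlety is the identification of the induced cotensor isomorphisms as morphisms of the form $\tau(\ell_K^{-1})$ (or $\tau(r_J^{-1})$); once this is secured, the remainder is a mechanical application of $\V$-functoriality.
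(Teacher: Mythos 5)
Your proposal is correct and follows essentially the same route as the paper: both arguments reduce the statement to identifying, via Corollary \bref{thm:cots_of_same_obj}, the isomorphisms relating the right/left cotensors $[K,I]_r = I\otimes K$, $[I,K]_\ell = I\otimes K$, $[I,I]_\ell = I\otimes I$ to the designated cotensors as $\tau(\ell_K^{\pm 1})$ and $\tau(\ell_I^{\pm 1})$ (computed from the counit formulas), and then assemble the result using functoriality of composition. The only difference is presentational: you migrate the unitors across the composition by explicit associativity identities, whereas the paper packages the same computation as one two-square pasted diagram.
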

\begin{proof}
Observe that the given diagram is the same as the periphery of the following diagram
$$
\xymatrix{
\T(J,I)\T(K,I) \ar[drr]_{[K,-]\otimes 1} \ar[rr]^(.45){[K,-]_r[I,-]_\ell} & & \T(JK,IK)\T(IK,II) \ar[d]|{\T(1,\tau(\ell^{-1}))\T(\tau(\ell),\tau(\ell^{-1}))} \ar[r]^(.6)c & \T(JK,II) \ar[d]^{\T(1,\tau(\ell^{-1}))}\\
& & \T(JK,K)\T(K,I) \ar[r]_(.6)c & \T(JK,I)
}
$$
which commutes, since the rightmost square clearly commutes and the commutativity of the leftmost square follows from the following claims:
\begin{enumerate}
\item $[I,-]_\ell:\T(K,I) \rightarrow \T(IK,II)$ is equal to $\T(\tau(\ell^{-1}_K),\tau(\ell_I))$.
\item The following diagram commutes.
$$
\xymatrix{
\T(J,I) \ar[dr]_{[K,-]} \ar[r]^(.45){[K,-]_r} & \T(JK,IK) \ar[d]^{\T(1,\tau(\ell^{-1}_K))}\\
& \T(JK,K)
}
$$
\end{enumerate}

In order to prove 1, observe that we have two cotensors $[I,K] = K$ and $[I,K]_\ell = IK$ of the same object $K$ by $I$ in $\T$, and we claim that the induced isomorphism $[I,K] \rightarrow [I,K]_\ell$ is simply $\tau(\ell_K):K \rightarrow IK$.  Indeed, the counit for the cotensor $[I,K]_\ell = IK$ is defined as the composite
\begin{equation}\label{eq:counit_left_cot_by_i}I \xrightarrow{\Coev} \uV(K,IK) = \J^\op(IK,K) \xrightarrow{\tau_{IK,K}} \T(IK,K)\;,\end{equation}
but one readily verifies that the coevaluation morphism $\Coev$ here is simply the morphism $[\ell^{-1}_{K}]$ that picks out the canonical isomorphism $\ell_K^{-1}:K \rightarrow IK$.  Hence the counit \eqref{eq:counit_left_cot_by_i} for $[I,K]_\ell$ is $[\tau(\ell_K^{-1})]$, whereas the counit for $[I,K] = K$ is the identity arrow $[1_K]:I \rightarrow \T(K,K)$, so the morphism $\T(\tau(\ell_K),K):\T(IK,K) \rightarrow \T(K,K)$ commutes with these cotensor counits, proving that $\tau(\ell_K)$ is the induced isomorphism of cotensors, as needed.  Similarly, we have two cotensors $[I,I] = I$ and $[I,I]_\ell = II$ of $I$ by $I$ in $\T$, and, by the same reasoning, the induced isomorphism $[I,I] \rightarrow [I,I]_\ell$ is $\tau(\ell_I)$.  We can now invoke \bref{thm:cots_of_same_obj} to deduce that 1 holds, using the fact that the morphism $[I,-]:\T(K,I) \rightarrow \T(K,I)$ induced by the cotensors $[I,K] = K$ and $[I,I] = I$ is the identity morphism.

To prove 2, note that we have a pair of cotensors $[K,I]_r = IK$ and $[K,I] = K$ of the same object $I$ of $\T$ by the object $K$ of $\V$, and we claim that the induced isomorphism $[K,I]_r \rightarrow [K,I]$ is $\tau(\ell^{-1}_K):IK \rightarrow K$.  Indeed, for this it suffices to show that the following diagram commutes
$$
\xymatrix{
K \ar[d]_{\gamma_K} \ar[r]^{\gamma'^I_K} & \T(IK,I)\\
\T(K,I) \ar[ur]_{\T(\tau(\ell^{-1}_K),1)} & 
}
$$
where $\gamma_K$ and $\gamma'^I_K$ denote the respective cotensor counits, and this follows readily from the definition of $\gamma_K$ and the characterization of $\gamma'^I_K$ given at \eqref{eq:direct_charn_right_cot}.  Hence we can now invoke \bref{thm:cots_of_same_obj} with $\C = \T$, $V = K$, $D_1 = J$, $D_2 = I$, $[V,D_1]^0 = [K,J]_r = [V,D_1]^1$, $[V,D_2]^0 = [K,I]_r$, and $[V,D_2]^1 = K$ to deduce that 2 holds.
\end{proof}

\begin{CorSub}
Let $\T$ be $\J$-theory, let $J,K$ be objects of $\J$, and let $\mu:V \rightarrow \T(J,I)$ and $\nu:W \rightarrow \T(K,I)$ be morphisms in $\V$.  Then $\mu$ commutes with $\nu$ if and only if
$$\kk{JK} \cdot (\mu \otimes \nu) = \kt{JK} \cdot (\mu \otimes \nu)\;.$$
\end{CorSub}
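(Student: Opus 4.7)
The plan is to derive the corollary as an immediate consequence of the preceding proposition. By definition, $\mu$ commutes with $\nu$ means
$$\kk{JIKI} \cdot (\mu \otimes \nu) \;=\; \kt{JIKI} \cdot (\mu \otimes \nu)$$
as morphisms $V \otimes W \rightarrow \T(J \otimes K, I \otimes I)$. The preceding proposition asserts that both triangles
$$\kk{JIKI} \;=\; \T(1,\tau(\ell_I^{-1})) \cdot \kk{JK}, \qquad \kt{JIKI} \;=\; \T(1,\tau(\ell_I^{-1})) \cdot \kt{JK}$$
commute, where $\T(1,\tau(\ell_I^{-1}))$ is an isomorphism.

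Thus the first displayed equation is equivalent to
$$\T(1,\tau(\ell_I^{-1})) \cdot \kk{JK} \cdot (\mu \otimes \nu) \;=\; \T(1,\tau(\ell_I^{-1})) \cdot \kt{JK} \cdot (\mu \otimes \nu),$$
and since $\T(1,\tau(\ell_I^{-1}))$ is an isomorphism (hence in particular a monomorphism), we may cancel it to obtain the equation $\kk{JK} \cdot (\mu \otimes \nu) = \kt{JK} \cdot (\mu \otimes \nu)$, as required. There is no real obstacle here; the work has already been done in the preceding proposition, and this corollary is just the observation that one may equivalently test commutation of $\mu$ and $\nu$ using the simpler, target-$I$ Kronecker products $\kk{JK}$ and $\kt{JK}$ of Definition \bref{def:kp_so}.
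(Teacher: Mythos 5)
Correct, and essentially the paper's (implicit) argument: the paper states this corollary without proof as an immediate consequence of the preceding proposition, exactly as you do. One small transcription slip: the proposition's triangle gives the factorization $\kk{JK} = \T(1,\tau(\ell_I^{-1}))\cdot\kk{JIKI}$ (and likewise for $\kt{}$), not $\kk{JIKI} = \T(1,\tau(\ell_I^{-1}))\cdot\kk{JK}$ as you wrote (which is not even well-typed, since $\kk{JK}$ already lands in $\T(J\otimes K,I)$); with the orientation corrected, your cancellation of the isomorphism goes through verbatim.
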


This immediately entails the following corollary to \bref{thm:comm_mor_th_via_single_outp_ops}:

\begin{ThmSub}\label{thm:cmtn_via_single_outp_ops}
Morphisms of $\J$-theories $A:\T \rightarrow \U$ and $B:\sS \rightarrow \U$ commute if and only if $\kk{JK} \cdot (A_{JI} \otimes B_{KI}) = \kt{JK} \cdot (A_{JI} \otimes B_{KI})$ for all objects $J,K$ of $\J$.
\end{ThmSub}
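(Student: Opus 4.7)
The plan is to chain together the two immediately preceding results. First, Theorem \bref{thm:comm_mor_th_via_single_outp_ops} tells us that $A$ commutes with $B$ if and only if, for every pair of objects $J, K \in \ob\J$, the morphisms $A_{JI} \colon \T(J,I) \to \U(J,I)$ and $B_{KI} \colon \sS(K,I) \to \U(K,I)$ commute \emph{in} $\U$. So the whole question is pushed down to the commutation, in $\U$, of the single pair $(A_{JI}, B_{KI})$ for each $(J,K)$.

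Second, I would apply the Corollary immediately preceding the theorem (characterizing commutation of arbitrary generalized elements $\mu \colon V \to \U(J,I)$ and $\nu \colon W \to \U(K,I)$ of the single-output hom-objects of $\U$), taking $\mu := A_{JI}$ with $V := \T(J,I)$ and $\nu := B_{KI}$ with $W := \sS(K,I)$. This yields directly the equivalence
\[
A_{JI} \perp B_{KI} \quad\Longleftrightarrow\quad \kk{JK} \cdot (A_{JI} \otimes B_{KI}) \;=\; \kt{JK} \cdot (A_{JI} \otimes B_{KI}),
\]
where now $\kk{JK}$ and $\kt{JK}$ are the first and second Kronecker products of single-output operations computed in the $\J$-theory $\U$, as in Definition \bref{def:kp_so}.

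Combining these two steps completes the proof, since quantifying the left-hand equivalence over all $J, K \in \ob\J$ yields commutation of $A$ with $B$ on the left and the stated system of equations on the right. I do not expect any real obstacle: there is no new content beyond careful bookkeeping of which $\J$-theory hosts the relevant Kronecker products (always $\U$) and matching the hypotheses of the preceding corollary to the morphisms $A_{JI}, B_{KI}$. The one pitfall to watch out for is the convention that the Kronecker products in the statement are implicitly those of $\U$; once this is made explicit, the argument is a one-line composition of the two cited results.
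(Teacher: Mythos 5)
Your proposal is correct and matches the paper's own (implicit) argument exactly: the paper presents this theorem as an immediate corollary obtained by combining Theorem \bref{thm:comm_mor_th_via_single_outp_ops} with the Corollary characterizing commutation of generalized elements $\mu:V\rightarrow\T(J,I)$, $\nu:W\rightarrow\T(K,I)$ via the single-output Kronecker products, instantiated in $\U$ with $\mu=A_{JI}$ and $\nu=B_{KI}$. Your remark that the Kronecker products are those of $\U$ is the right bookkeeping point and is consistent with the paper.
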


\section{Commutants for \texorpdfstring{\kern -0.5ex $\J$}{J}-ary monads on \texorpdfstring{$\V$}{V}}

\begin{ParSub}[\textbf{Correspondence between} $\J$-\textbf{theories and} $\J$-\textbf{ary monads}]\label{par:equiv_jth_jary_mnds}
Given a system of arities $j:\J \hookrightarrow \uV$, we say that a $\V$-monad $\TT = (T,\eta,\mu)$ on $\uV$ is a $\J$-\textbf{ary} $\V$-\textbf{monad} \cite[\S 11]{Lu:EnrAlgTh} if $T$ preserves ($\V$-enriched) left Kan extensions along $j$.  For example, for the system of arities $\J = \FinCard \hookrightarrow \Set = \V$, we recover the usual notion of \textit{finitary monad} \cite[11.3]{Lu:EnrAlgTh}.  It is shown in \cite[\S 11]{Lu:EnrAlgTh} that there is an equivalence between $\J$-theories and $\J$-ary $\V$-monads on $\uV$ \cite[11.8]{Lu:EnrAlgTh} as soon as the system of arities $j:\J \hookrightarrow \uV$ is \textbf{eleutheric} \cite[\S 7]{Lu:EnrAlgTh}.  The latter condition on $j$ means that every $\V$-functor $\J \rightarrow \uV$ has a left Kan extension along $j$ and that, furthermore, these Kan extensions are preserved by the $\V$-functors $\uV(J,-):\uV \rightarrow \uV$ associated to objects $J$ of $\J$.  Each of the systems of arities listed in Example \bref{exa:sys_ar_jth}(a)-(d) is eleutheric \cite[7.5]{Lu:EnrAlgTh}, and the system of arities in \bref{exa:sys_ar_jth}(e) is eleutheric for a broad class of categories $\V$ \cite[7.5 \#5]{Lu:EnrAlgTh} that includes every countably cocomplete cartesian closed category $\V$.  For the remainder of this section we shall fix an eleutheric system of arities $j:\J \hookrightarrow \uV$.  The precise result relating $\J$-theories and $\J$-ary monads is then as follows:
\end{ParSub}

\begin{ThmSub}[{\cite[11.8]{Lu:EnrAlgTh}}]\label{thm:equiv_jth_jmnd}
There is an equivalence
$$\ThJ \;\;\;\;\simeq\;\;\;\; \MndJ(\uV)$$
between the category $\ThJ$ of $\J$-theories and the full subcategory $\MndJ(\uV)$ of the category of $\V$-monads on $\uV$ with objects all $\J$-ary $\V$-monads.
\end{ThmSub}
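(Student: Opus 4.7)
The plan is to produce mutually quasi-inverse functors between $\ThJ$ and $\MndJ(\uV)$, guided by the fact that the eleutheric condition on $j:\J \hookrightarrow \uV$ controls both the existence of the Kan-theoretic constructions involved and their preservation by the relevant representables \pbref{par:equiv_jth_jary_mnds}. Since the statement is attributed to the author's prior work, I would simply quote \cite[11.8]{Lu:EnrAlgTh}, but here is how I would carry out the proof from scratch.

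In the direction $\ThJ \rightarrow \MndJ(\uV)$, given a $\J$-theory $\T$ I would form the $\V$-category of $\T$-algebras in $\uV$, which exists by \bref{thm:existence_of_vcat_talgs}, and consider the forgetful $\V$-functor $\ca{\text{$-$}} = \Ev_I : \Alg{\T} \rightarrow \uV$. The eleutheric hypothesis supplies a left $\V$-adjoint $F:\uV \rightarrow \Alg{\T}$ whose value at an object $V$ of $\uV$ is the $\T$-algebra obtained as the left Kan extension along $j$ of the $\V$-functor $J \mapsto V \otimes J$ (or equivalently the $\V$-functor representing $A \mapsto \uV(V,\ca{A})$). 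The induced $\V$-monad $\TT = \ca{F(-)}$ is then $\J$-ary precisely because left Kan extensions along $j$ are preserved by $\uV(J,-)$ for each $J \in \ob\J$, which in turn forces $T$ to preserve them.

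In the direction $\MndJ(\uV) \rightarrow \ThJ$, given a $\J$-ary $\V$-monad $\TT = (T,\eta,\mu)$, I would form the full sub-$\V$-category $\T_\TT$ of the $\V$-enriched Kleisli $\V$-category of $\TT$ spanned by the objects of $\J$, so that $\ob \T_\TT = \ob\J$ and $\T_\TT(J,K) = \uV(jJ, T(jK))$ with Kleisli composition and identities $[\eta_{jJ}]$. The canonical identity-on-objects $\V$-functor $\tau:\J^\op \rightarrow \T_\TT$, sending $f:K \rightarrow J$ in $\J$ to $\eta_{jK} \cdot jf$, is readily seen to preserve $\J$-cotensors, so that $\T_\TT$ is a $\J$-theory; this is where one uses that $j$ is fully faithful and strong monoidal, together with the fact that representables of objects of $\J$ preserve the Kan extensions in play.

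The main obstacle will be verifying that these functors are mutually quasi-inverse. Passing from $\T$ to $\TT$ and back, one must identify $\uV(J, T K) \cong \T(J,K)$ naturally in $J,K \in \ob\J$; this reduces to the universal property of $F(jK)$ as the free $\T$-algebra on $jK$ combined with the formula $\T(J,K) \cong \Alg{\T}(F(jK),F(jJ))$ issuing from the standardness of $\J$-cotensors \pbref{par:des_jcot}. Passing from $\TT$ to $\T_\TT$ and then back to a monad, the recovered monad agrees with $\TT$ on $\J$ by construction, and one extends this agreement to all of $\uV$ by invoking the $\J$-ary-ness of $\TT$ together with eleutheric-ness of $j$: any $\J$-ary $\V$-endofunctor on $\uV$ is the left Kan extension along $j$ of its restriction to $\J$, so two $\J$-ary monads agreeing on $\J$ agree everywhere, up to a canonical isomorphism one must check is compatible with the monad structures. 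Functoriality of both constructions in morphisms is then routine, yielding the claimed equivalence.
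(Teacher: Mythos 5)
The paper does not reprove this theorem: it imports it from \cite[11.8]{Lu:EnrAlgTh} and merely records the two functors explicitly in \bref{par:passage_betw_th_mnds}, namely $\mathsf{m}(\T)$ with underlying endo-$\V$-functor $\Lan_j\T(\tau-,I)$, and $\mathsf{t}(\TT)=\J_\TT^\op$, the \emph{opposite} of the full sub-$\V$-category of the Kleisli $\V$-category on $\ob\J$. Your Kleisli direction agrees with this up to a variance slip: you take $\T_\TT(J,K)=\uV(jJ,T(jK))$ without passing to the opposite, but with the paper's convention a $\J$-theory receives $\tau:\J^\op\to\T$ and each object $J$ must be a cotensor $[J,I]$ of $I$, which forces $\mathsf{t}(\TT)(J,K)=\uV_\TT(K,J)=\uV(K,TJ)$; with your homs the required isomorphism $\T_\TT(K,J)\cong\uV(J,\T_\TT(K,I))$ fails in general (already for the identity monad), and your formula $\tau(f)=\eta_{jK}\cdot jf$ does not typecheck against them.

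The substantive gap is in the direction $\ThJ\to\MndJ(\uV)$. You extract the monad from a free--forgetful $\V$-adjunction $F\dashv\Ev_I$ on $\Alg{\T}$, but the theorem assumes only that $j$ is eleutheric, while the existence of $\Alg{\T}$ as a $\V$-category (hence of the enriched adjunction) needs additional completeness of $\V$ --- equalizers at the very least; compare \bref{thm:existence_of_vcat_talgs} and the remark preceding \bref{thm:cmtnt_alg_ele}, which invokes \cite[11.14]{Lu:EnrAlgTh} only under the hypothesis that $\V$ has equalizers. In the stated generality one must instead define $T=\Lan_j\T(\tau-,I)$ directly --- eleutheric-ness guarantees that this Kan extension exists and is preserved by each $\uV(J,-)$ --- and equip it with unit and multiplication coming from the identities and composition of $\T$ (in \cite{Lu:EnrAlgTh} this is organized through an intermediate equivalence with monads relative to $j$), bypassing the category of algebras entirely. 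Relatedly, your description of $F(V)$ as ``the left Kan extension along $j$ of $J\mapsto V\otimes J$'' is not right as stated: that Kan extension is an endo-$\V$-functor of $\uV$, not a $\T$-algebra, and the carrier of the free algebra is $\bigl(\Lan_j\T(\tau-,I)\bigr)(V)$, i.e.\ the representing object you mention only parenthetically. The remaining ingredients you list --- a $\J$-ary endofunctor is the left Kan extension of its restriction to $\J$, and two $\J$-ary monads agreeing on $\J$ agree everywhere --- are correct, though the latter requires a nontrivial compatibility check with the monad structures of the kind carried out in the proof of \bref{thm:cmmtn_jth_jmnd}.
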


\begin{ParSub}\label{par:passage_betw_th_mnds}
Explicitly, given a $\J$-theory $\T$ one obtains a $\V$-monad $\TT = \mathsf{m}(\T)$ whose underlying endo-$\V$-functor $T:\uV \rightarrow \uV$ is the left Kan extension of
\begin{equation}\label{eqn_tsubi}\T_I := \T(\tau-,I):\J \rightarrow \uV\end{equation}
along $j:\J \hookrightarrow \uV$, where $\tau:\J^\op \rightarrow \T$ is the identity-on-objects $\V$-functor associated to $\T$.  Given a morphism $A:\T \rightarrow \U$ between $\J$-theories $(\T,\tau)$ and $(\U,\upsilon)$, the associated morphism $\mathsf{m}(A):\mathsf{m}(\T) \rightarrow \mathsf{m}(\U)$ is obtained by applying $\Lan_j:\VCAT(\J,\uV) \rightarrow \VCAT(\uV,\uV)$ to the $\V$-natural transformation
$A_{\tau-,I}:\T(\tau-,I) \rightarrow \U(A\tau-,AI) = \U(\upsilon-,I)\;,$
recalling that $A \circ \tau = \upsilon$ since $A$ is a morphism of $\J$-theories.

In the other direction, given a $\J$-ary $\V$-monad $\TT$ on $\uV$, let $\uV_\TT$ denote the Kleisli $\V$-category for $\TT$ and let $\J_\TT$ denote its full sub-$\V$-category on the objects of $\J$.  The $\J$-theory $\mathsf{t}(\TT)$ associated to $\TT$ is then the opposite $\J_\TT^\op$, which we therefore call the \textbf{Kleisli} $\J$-\textbf{theory} for $\TT$.  These assignments extend to mutually pseudo-inverse functors $\mathsf{m}$, $\mathsf{t}$ between $\ThJ$ and $\MndJ(\uV)$.

In particular, if we take $\J = \uV$ and $j = 1_{\uV}$ then \bref{thm:equiv_jth_jmnd} yields an equivalence $\Th_{\uV} \simeq \Mnd_{\uV}(\uV) = \Mnd_{\VCAT}(\uV)$ between $\uV$-theories and arbitrary $\V$-monads on $\uV$, since each of the latter is $\uV$-ary, trivially.
\end{ParSub}

A notion of commutation of morphisms of arbitrary $\V$-monads on $\uV$ was introduced by Kock in the paper \cite{Kock:DblDln} of 1970, and we shall now reconcile that notion with the notion of commutation of morphisms of $\J$-theories.  Kock had defined the notion of \textit{commutative monad} in \cite{Kock:Comm}, observing that for any $\V$-monad $\TT = (T,\eta,\mu)$ on $\uV$ one can define for each pair of objects $V,W$ of $\V$ a pair of canonical morphisms
$$\kappa^\TT_{VW},\:\widetilde{\kappa}^\TT_{VW} \colon TV \otimes TW \rightarrow T(V \otimes W)$$
(see \cite[2.1, 3.1]{Kock:Comm}) that we shall call the first and second \textbf{Kock-Kronecker products} carried by $\TT$.  One says that $\TT$ is a \textbf{commutative monad} if $\kappa^\TT_{VW} = \widetilde{\kappa}^\TT_{VW}$ for all objects $V$ and $W$.  Kock's notion of commutation generalizes this:

\begin{DefSub}[{Kock, \cite[4.1]{Kock:DblDln}}]
Let $\alpha:\TT \rightarrow \UU$ and $\beta:\SSS \rightarrow \UU$ be morphisms of $\V$-monads on $\uV$.  We say that $\alpha$ \textbf{commutes with} $\beta$ if the two composites in
$$\xymatrix{TV \otimes SW \ar[rr]^{\alpha_V \otimes \beta_W} & & UV \otimes UW \ar@<.5ex>[rr]^{\kappa^\UU_{VW}} \ar@<-.5ex>[rr]_{\widetilde{\kappa}^\UU_{VW}} & & U(V \otimes W)}$$
are equal for all objects $V$ and $W$ of $\V$.
\end{DefSub}

\begin{ThmSub}\label{thm:cmmtn_jth_jmnd}
Let $A:\T \rightarrow \U$ and $B:\sS \rightarrow \U$ be morphisms of $\J$-theories, and let $\alpha:\TT \rightarrow \UU$ and $\beta:\SSS \rightarrow \UU$ denote the corresponding morphisms of $\J$-ary $\V$-monads on $\uV$.  Then $A$ commutes with $B$ if and only if $\alpha$ commutes with $\beta$.
\end{ThmSub}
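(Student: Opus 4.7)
The plan is to prove each direction by reducing both conditions to the same family of equations indexed by pairs $(J,K) \in \ob\J \times \ob\J$. On the $\J$-theory side, Theorem \bref{thm:cmtn_via_single_outp_ops} already tells us that $A$ commutes with $B$ if and only if
$$\kk{JK} \cdot (A_{JI} \otimes B_{KI}) \;=\; \kt{JK} \cdot (A_{JI} \otimes B_{KI})$$
(in $\U$) for every $(J,K) \in \ob\J \times \ob\J$. I therefore aim to recognise the latter family as exactly the restriction of Kock's commutation condition to arities in $\J$, and then extend from $\J$ to $\V$ using the $\J$-ariness of the monads involved.

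First I would unpack the passage $\mathsf{m}$ of \bref{par:passage_betw_th_mnds}. Since $T = \Lan_j \T(\tau-,I)$ and $j$ is fully faithful, we have canonical isomorphisms $TJ \cong \T(J,I)$ natural in $J \in \ob\J$, and likewise $SK \cong \sS(K,I)$, $UJ \cong \U(J,I)$; under these isomorphisms $\alpha_J$ corresponds to $A_{JI}$ and $\beta_K$ to $B_{KI}$. The key computational step is then to verify that, under these same isomorphisms, the Kock-Kronecker morphism $\kappa^\UU_{JK} \colon UJ \otimes UK \to U(J \otimes K)$ corresponds to $\kk{JK}^\U \colon \U(J,I) \otimes \U(K,I) \to \U(J \otimes K, I)$, and likewise $\widetilde{\kappa}^\UU_{JK}$ corresponds to $\kt{JK}^\U$. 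This is a matter of unwinding Kock's definition of $\kappa$ in terms of the $\V$-strength of $\UU$ and its multiplication and comparing with \bref{def:kp_so}, using that the $\J$-theory $\mathsf{t}(\UU)$ is the opposite Kleisli $\J$-theory of $\UU$, so that composition in $\U$ encodes $\mu^\UU$ composed with Kleisli-extension, and the right/left cotensor morphisms $[K,-]_r, [J,-]_\ell$ in $\U$ are built from the very strength morphisms used to define $\kappa^\UU$.

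Once this identification at arities in $\J$ is in hand, the passage from $\J$ to $\V$ proceeds as follows. Both $\kappa^\UU_{V W}(\alpha_V \otimes \beta_W)$ and $\widetilde{\kappa}^\UU_{V W}(\alpha_V \otimes \beta_W)$ are $\V$-natural in $(V,W) \in \uV \otimes \uV$, so they constitute $\V$-natural transformations $T(-) \otimes S(-) \Rightarrow U(- \otimes -)$ between $\V$-bifunctors. Because $T$, $S$, and $U$ preserve left Kan extensions along $j$ (the defining property of $\J$-ariness) and $j$ is symmetric strong monoidal, both bifunctors are left Kan extensions along $j \otimes j$ of their restrictions to $\J \otimes \J$; hence any two such $\V$-natural transformations agree as soon as they agree on $\ob\J \times \ob\J$. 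Combined with the previous paragraph, this gives the required equivalence with Theorem \bref{thm:cmtn_via_single_outp_ops}.

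The main obstacle I expect is the middle step: matching Kock's $\kappa^\UU_{J K}$ (defined monad-theoretically via strength and multiplication) with the theory-level $\kk{JK}^\U$ (defined via Kleisli composition and the right-cotensor counit). Both are shadows of the same underlying closed-monoidal structure, but aligning the conventions cleanly will involve careful tracking of the symmetry used in \bref{rem:diagr_text_compn}, since $\kk{}$ uses diagrammatic composition and $\kt{}$ uses textual composition, and the analogous asymmetry is present in Kock's $\kappa$ versus $\widetilde{\kappa}$ (one going through $t$, the other through the swapped strength $t'$). The Kan-extension argument in the final step is conceptually routine once one has set up the $\V$-natural $\V$-bifunctorial framing correctly.
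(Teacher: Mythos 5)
Your proposal is correct and follows essentially the same route as the paper: reduce to single-output Kronecker products via \bref{thm:cmtn_via_single_outp_ops}, use the fact that $\otimes\circ(T\otimes S)$ is a left Kan extension along $j\otimes j$ to detect equality of the Kock composites on arities in $\J$ alone, and match $\kappa^\UU_{JK},\widetilde{\kappa}^\UU_{JK}$ with $\kk{JK},\kt{JK}$ through the Kleisli $\J$-theory. The step you flag as the main obstacle is indeed where the paper's proof does its real work (and partly defers to an elementary computation in \cite[6.2.5]{Lu:PhD}), routing the comparison through the isomorphism $\U\cong\J^\op_\UU$ of $\J$-theories so that the theory-level Kronecker products are transported before being compared with Kock's.
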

\begin{proof}
The morphisms $\kappa^\UU_{VW},\:\widetilde{\kappa}^\UU_{VW}$ constitute $\V$-natural transformations $\kappa,\widetilde{\kappa}$ as in the leftmost of the following diagrams.
\begin{equation}\label{eqn:pasting_diagrams_for_cmtn}
\xymatrix{
\uV\otimes\uV \ar@/_3ex/[d]_{T \otimes S}|{}="s3" \ar@/^3ex/[d]^{U \otimes U}|{}="t3" \ar[rr]^\otimes & & \uV \ar@/^3ex/[d]^U \ar@{}[dll]|(.45){}="s2"|(.3){}="t2" & & & \J\otimes\J \ar@/_3ex/[d]_{\T_I\otimes\sS_I}|{}="s4" \ar@/^3ex/[d]^{\U_I\otimes\U_I}|{}="t4" \ar[rr]^\otimes & & \J \ar@/^3ex/[d]^{\U_I} \ar@{}[dll]|(.4){}="s1"|(.28){}="t1" \\
\uV\otimes\uV \ar[rr]_\otimes & & \uV & & & \uV\otimes\uV \ar[rr]_\otimes & & \uV
\ar@{=>}"s1";"t1"_{\mathsf{k},\:\widetilde{\mathsf{k}}}
\ar@{=>}"s2";"t2"_{\kappa,\:\widetilde{\kappa}}
\ar@{}"s3";"t3"|(.2){}="s5"|(.8){}="t5"
\ar@{}"s4";"t4"|(.2){}="s6"|(.8){}="t6"
\ar@{=>}"s5";"t5"^{\alpha \otimes \beta}
\ar@{=>}"s6";"t6"^{A \otimes B}
}
\end{equation}
The first and second single-output Kronecker products $\mathsf{k}$ and $\widetilde{\mathsf{k}}$ for $\U$ \pbref{def:kp_so} constitute $\V$-natural transformations as in the rightmost diagram, where we have employed the notation $\T_I = \T(\tau-,I)$ of \eqref{eqn_tsubi} and written simply $A$ for the natural transformation $A_{\tau-,I}:\T_I \rightarrow \U_I$ of \bref{par:passage_betw_th_mnds}, and similarly for $B$.

Now $\alpha$ commutes with $\beta$ iff the leftmost diagram is a \textit{fork}, meaning that the pasted 2-cells involving $\kappa,\widetilde{\kappa}$ obtained therein are equal, whereas $A$ commutes with $B$ iff the rightmost diagram is a fork \pbref{thm:cmtn_via_single_outp_ops}.  Since $T = \Lan_j\T_I$ and $S = \Lan_j\sS_I$, it follows by a short computation with coends that the composite $\uV\otimes\uV \xrightarrow{T\otimes S} \uV\otimes\uV \xrightarrow{\otimes} \uV$ is a left Kan extension of its restriction along $j\otimes j:\J\otimes\J \rightarrow \uV\otimes\uV$.  From this it follows by \cite[4.43]{Ke:Ba} that the leftmost diagram in \eqref{eqn:pasting_diagrams_for_cmtn} is a fork iff it `is a fork when whiskered with $j\otimes j$', i.e. iff
\begin{equation}\label{eq:eqn_for_left_diag}\kappa \circ (\alpha \otimes \beta) \circ (j \otimes j) = \widetilde{\kappa} \circ (\alpha \otimes \beta) \circ (j \otimes j)\;,\end{equation}
where $\circ$ denotes pasting/whiskering as applicable.  Hence it is our task to show that the latter equation is equivalent to the statement that the rightmost diagram in \eqref{eqn:pasting_diagrams_for_cmtn} is a fork.

In the diagram
\begin{equation}\label{eq:prism}
\xymatrix{
\J\otimes\J \ar[dd]_{\T_I\otimes\sS_I} \ar@{=}[rr] \ar[dr]^{j\otimes j} & & \J\otimes\J \ar@{..>}[dd]|(.3){\U_I\otimes\U_I} \ar[dr]^{j\otimes j} \ar[rr]^\otimes & & \J \ar@{..>}[dd]|(.3){\U_I} \ar[dr]^j\\
& \uV\otimes\uV \ar[dl]^{T\otimes S} \ar@{=}[rr] & & \uV\otimes\uV \ar[dl]^{U\otimes U} \ar[rr]^(.4)\otimes & & \uV \ar[dl]^U\\
\uV\otimes\uV \ar@{=}[rr] & & \uV\otimes\uV \ar[rr]_\otimes & & \uV
}
\end{equation}
let the 2-cell $\alpha \otimes \beta$ occupy the leftmost cell on the lower front face of the triangular prism (which we visualize as protruding from the page with the dashed lines behind the prism).  Let the 2-cell $A \otimes B$ occupy the left cell on the back face.  Let the 2-cell $\mathsf{k}$ occupy the rightmost cell on the back face, and let $\kappa$ occupy the rightmost cell on the lower front face.  Observe that the cells on the upper front face commute strictly.  Since $U = \Lan_j\U_I$, we have a canonical $\V$-natural transformation $\theta^{\U}:\U_I \Rightarrow U \circ j$, namely the component at $\U_I$ of the unit of the left Kan extension adjunction $\Lan_j \dashv (-) \circ j:\VCAT(\uV,\uV) \rightarrow \VCAT(\J,\uV)$, and since $j$ is fully faithful, $\theta^{\U}$ is an invertible 2-cell that occupies the rightmost face of the prism.  We therefore also have an invertible 2-cell $\theta^\U\otimes \theta^\U$ that occupies the triangular cell within the interior of the prism, and similarly we also have an invertible 2-cell $\theta^{\T} \otimes \theta^{\sS}$ that occupies the leftmost face.

Since the 2-cells on the left and right faces of the prism \eqref{eq:prism} are invertible, we can reason that it now suffices to show that the surface of the prism \eqref{eq:prism} `commutes' (in the sense that the 2-cell that results from pasting its lower front, upper front, and left faces is equal to the 2-cell obtained by pasting its back and right faces) and that the analogous prism with $\widetilde{\mathsf{k}},\widetilde{\kappa}$ in place of $\mathsf{k},\kappa$ commutes as well.  We prove the first of these claims; the second is then established similarly.  To this end, first observe that the 3-dimensional cell constituting the left half of the prism commutes in the given sense, since by definition $\alpha$ and $\beta$ are the images of $A$ and $B$ under the left adjoint $\Lan_j:\VCAT(\J,\uV) \rightarrow \VCAT(\uV,\uV)$.  We claim that the rightmost half of the prism also commutes.  To show this, we must prove that for each pair of objects $J,K$ of $\J$ the diagram
\begin{equation}\label{eq:kronecker_products_commute_w_isos1}
\xymatrix{
\U_IJ\otimes\U_IK \ar[d]_{\mathsf{k}_{JK}} \ar[rr]^{\theta^\U_J\otimes\theta^\U_K} & & UJ \otimes UK \ar[d]^{\kappa_{JK}}\\
\U_I(J\otimes K) \ar[rr]_{\theta^\U_{J\otimes K}} & & U(J\otimes K)
}
\end{equation}
commutes.  To this end, note that since $\UU = \mathsf{m}(\U)$ was obtained from $\U$ via the equivalence $\ThJ \simeq \MndJ(\uV)$, we have an isomorphism $\eta^\U:\U \xrightarrow{\sim} \mathsf{t}(\mathsf{m}(\U)) = \J_\UU^\op$, recalling that $\J_\UU^\op$ denotes the Kleisli $\J$-theory \pbref{par:passage_betw_th_mnds}.  Since $\eta^\U$ is a morphism of $\J$-theories, it follows by \bref{def:morph_th}, \bref{rem:morph_pres_lr_jcots}, \bref{def:kp_so} that the leftmost square in
\begin{equation}\label{eq:kronecker_products_commute_w_isos2}
\xymatrix{
\U(J,I)\otimes\U(K,I) \ar[d]_{\mathsf{k}_{JK}} \ar[rr]^{\eta^\U_{JI}\otimes\eta^\U_{KI}} & & \J^\op_\UU(J,I)\otimes\J_\UU^\op(K,I) \ar[d]^{\mathsf{k}_{JK}} \ar[rr]^\sim & & UJ\otimes UK \ar[d]^{\kappa_{JK}}\\
\U(J \otimes K,I) \ar[rr]_{\eta^\U_{J\otimes K,I}} & & \J_\UU^\op(J\otimes K,I) \ar[rr]_\sim & & U(J\otimes K)
}
\end{equation}
commutes.  The horizontal arrows in the rightmost square are obtained from the canonical isomorphisms $\J_\UU^\op(L,I) = \uV(I,UL) \cong UL$ for objects $L$ of $\J$, and by the definition of the equivalence $\ThJ \simeq \MndJ(\uV)$ in \cite[11.8, 11.6]{Lu:EnrAlgTh} we have that $\eta^\U_{LI}:\U(L,I) \rightarrow \J^\op_\UU(L,I)$ is the composite $\xymatrix{\U(L,I) \ar[rr]|(.55){\theta^\U_L} & & UL \ar[r]^(.4)\sim & \J^\op_\UU(L,I)}$ whose second factor is this canonical isomorphism.  Hence the periphery of \eqref{eq:kronecker_products_commute_w_isos2} is the square \eqref{eq:kronecker_products_commute_w_isos1}, which therefore commutes as soon as we can show that the rightmost square in \eqref{eq:kronecker_products_commute_w_isos2} commutes.  But this follows from \cite[6.2.5]{Lu:PhD}, wherein it is proved by elementary means that the analogous square with $\uV_\UU^\op$ in place of $\J_\UU^\op$ commutes for any pair of objects of $\V$ in place of $J,K$, and for any $\V$-monad $\UU$ on $\uV$.
\end{proof}

\begin{CorSub}\label{thm:jary_mnd_comm_iff_jth_comm}
A $\J$-ary monad $\TT$ is commutative if and only if its corresponding $\J$-theory is commutative.
\end{CorSub}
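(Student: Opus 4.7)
The plan is to derive this as an immediate special case of Theorem~\bref{thm:cmmtn_jth_jmnd}. The key observation is that both notions of commutativity at issue are ``self-commutation'' conditions on identity morphisms: a $\J$-theory $\T$ is commutative precisely when the identity morphism $1_\T \colon \T \to \T$ commutes with itself as a morphism of $\J$-theories (since this is exactly the equation $\kk{JJ'KK'} = \kt{JJ'KK'}$ for all arities, as noted in the definition of commutative $\J$-theory), and a $\V$-monad $\TT$ is commutative in the sense of Kock precisely when the identity morphism $1_\TT \colon \TT \to \TT$ commutes with itself in the sense of Kock's Definition (since this is the equation $\kappa^\TT_{VW} = \widetilde{\kappa}^\TT_{VW}$ for all $V,W$).

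With this reformulation in hand, I would instantiate Theorem~\bref{thm:cmmtn_jth_jmnd} at $\sS = \T = \U$ and $A = B = 1_\T$. The corresponding morphisms of $\J$-ary $\V$-monads are $\alpha = \beta = 1_\TT$, since the equivalence $\ThJ \simeq \MndJ(\uV)$ of Theorem~\bref{thm:equiv_jth_jmnd} is a functor and therefore carries identities to identities. The biconditional supplied by Theorem~\bref{thm:cmmtn_jth_jmnd} then reads: $1_\T$ commutes with $1_\T$ if and only if $1_\TT$ commutes with $1_\TT$. By the two reformulations above, this is precisely the claim that $\T$ is commutative iff $\TT$ is commutative.

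There is essentially no obstacle: the only thing one might briefly verify is that the ``Kock-commutativity of $1_\TT$ with itself'' really does unwind to Kock's definition of commutative monad, and symmetrically for $\J$-theories. Both verifications are immediate from the respective definitions, so the proof reduces to a one-line application of the preceding theorem.
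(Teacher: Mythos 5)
Your proof is correct and is precisely the argument the paper intends: the corollary is stated without proof as an immediate consequence of Theorem \bref{thm:cmmtn_jth_jmnd}, obtained exactly as you do by instantiating at $A = B = 1_\T$ and observing that self-commutation of the identity morphism unwinds to commutativity on both the theory side and the monad side. The paper's earlier remark that a $\J$-theory is commutative iff $1_\T$ commutes with itself confirms this is the intended route.
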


\begin{RemSub}
When applying \bref{thm:cmmtn_jth_jmnd} and \bref{thm:jary_mnd_comm_iff_jth_comm} it is important to know that the notion of commutation of cospans of $\J$-theories (resp. $\V$-monads) is invariant under isomorphism of cospans (considered as diagrams of shape $\cdot \rightarrow \cdot \leftarrow \cdot$).  This is readily verified using \bref{thm:basic_props_of_cmmtn_morphs} and a similar proposition for $\V$-monads \cite[4.3]{Kock:DblDln}. 
\end{RemSub}

\begin{DefSub}\label{def:jary_and_abs_cmtnt_for_mnds}
Let $\alpha:\TT \rightarrow \UU$ be a morphism of $\V$-monads on $\uV$.
\begin{enumerate}
\item If $\TT$ and $\UU$ are $\J$-ary $\V$-monads, then we define the $\J$-\textbf{ary} \textbf{commutant} of $\alpha$ (or of $\TT$ with respect to $\alpha$) to be the $\J$-ary $\V$-monad $\TT^{\perp}_{\alpha,j}$ associated to the commutant $(\mathsf{t}(\TT))^\perp_{\mathsf{t}(\alpha)}$ of the morphism of $\J$-theories $\mathsf{t}(\alpha):\mathsf{t}(\TT) \rightarrow \mathsf{t}(\UU)$ associated to $\alpha$, \textit{provided that the latter commutant exists}.
\item We define the \textbf{(absolute) commutant} $\TT^\perp_\alpha$ of $\TT$ with respect to $\alpha$ to be the $\uV$-ary commutant of $\TT$ with respect to $\alpha$, \textit{provided that the latter commutant exists}.
\end{enumerate}
\end{DefSub}

\begin{RemSub}\label{rem:existence_of_cmtnt_for_mnds}
By \bref{thm:existence_of_commutant_via_intersections}, if $\V$ has intersections of $(\ob\J)$-indexed families of strong subobjects, then the $\J$-ary commutant always exists.  In particular, if $\V$ is complete and well-powered with respect to strong subobjects, then the absolute commutant exists for any morphism of $\V$-monads on $\uV$.
\end{RemSub}

\begin{RemSub}
Since we have an equivalence $\ThJ \simeq \MndJ(\uV)$ and the notions of commutation in these two categories agree \pbref{thm:cmmtn_jth_jmnd}, several of our results and definitions concerning commutants and commutation for $\J$-theories can be transposed to the setting of $\J$-ary monads, and with $\J = \uV$ they apply also to the absolute commutant for arbitrary $\V$-monads on $\uV$.  In particular, we deduce by \bref{thm:commutants_via_commutativity} and \bref{thm:cmmtn_jth_jmnd} that the $\J$-ary commutant is characterized by a universal property when it exists: 
\end{RemSub}

\begin{ThmSub}\label{thm:univ_prop_jary_cmmtnt}
Let $\alpha:\TT \rightarrow \UU$ and $\beta:\SSS \rightarrow \UU$ be morphisms of $\J$-ary monads on $\uV$, and suppose that the $\J$-ary commutant of $\alpha$ exists.  Then $\alpha$ and $\beta$ commute if and only if $\beta$ factors through the $\J$-ary commutant $\TT^\perp_{\alpha,j} \rightarrow \UU$ of $\alpha$.
\end{ThmSub}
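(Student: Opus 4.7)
The plan is to deduce this theorem as a direct transport of Theorem \bref{thm:commutants_via_commutativity} along the equivalence $\ThJ \simeq \MndJ(\uV)$ of Theorem \bref{thm:equiv_jth_jmnd}, bridging the two sides via Theorem \bref{thm:cmmtn_jth_jmnd}, which ensures that the notions of commutation on the two sides agree. No genuinely new computation should be required.

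Concretely, I would first apply the pseudo-inverse $\mathsf{t}$ to get morphisms of $\J$-theories $\mathsf{t}(\alpha):\mathsf{t}(\TT)\to\mathsf{t}(\UU)$ and $\mathsf{t}(\beta):\mathsf{t}(\SSS)\to\mathsf{t}(\UU)$. By Definition \bref{def:jary_and_abs_cmtnt_for_mnds}, $\TT^\perp_{\alpha,j} = \mathsf{m}\bigl((\mathsf{t}(\TT))^\perp_{\mathsf{t}(\alpha)}\bigr)$, and the canonical morphism $\TT^\perp_{\alpha,j}\to\UU$ is (up to the canonical isomorphism $\mathsf{m}\mathsf{t}(\UU) \cong \UU$) the image under $\mathsf{m}$ of the subtheory embedding $(\mathsf{t}(\TT))^\perp_{\mathsf{t}(\alpha)}\hookrightarrow\mathsf{t}(\UU)$ provided by Proposition \bref{thm:cmtnt_is_subth}. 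Since $\mathsf{m}$ and $\mathsf{t}$ are mutually pseudo-inverse equivalences, $\beta$ factors through $\TT^\perp_{\alpha,j}\to\UU$ in $\MndJ(\uV)$ if and only if $\mathsf{t}(\beta)$ factors through $(\mathsf{t}(\TT))^\perp_{\mathsf{t}(\alpha)}\hookrightarrow\mathsf{t}(\UU)$ in $\ThJ$, simply because an equivalence of categories creates and preserves factorizations.

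Next, by Theorem \bref{thm:commutants_via_commutativity} applied to the cospan $\bigl(\mathsf{t}(\alpha),\mathsf{t}(\beta)\bigr)$ in $\ThJ$, the latter factorization condition is equivalent to $\mathsf{t}(\alpha)$ commuting with $\mathsf{t}(\beta)$. Finally, Theorem \bref{thm:cmmtn_jth_jmnd} identifies this with the condition that $\alpha$ commutes with $\beta$, completing the chain of equivalences. The only point requiring a moment's care is that one should invoke Theorem \bref{thm:cmmtn_jth_jmnd} on the particular cospan $(\alpha,\beta)$ and its image $\bigl(\mathsf{t}(\alpha),\mathsf{t}(\beta)\bigr)$; the remark preceding this theorem (that commutation is invariant under isomorphism of cospans, via \bref{thm:basic_props_of_cmmtn_morphs} and \cite[4.3]{Kock:DblDln}) handles any bookkeeping discrepancies introduced by the canonical isomorphism $\mathsf{m}\mathsf{t}(\UU)\cong\UU$. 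I do not anticipate any genuine obstacle; the entire argument is a diagram-chase across the equivalence, and the main discipline is merely to keep the canonical isomorphisms of the equivalence straight.
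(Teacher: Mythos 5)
Your proposal is correct and matches the paper's own (implicit) argument: the theorem is stated there as a direct consequence of Theorem \bref{thm:commutants_via_commutativity} and Theorem \bref{thm:cmmtn_jth_jmnd}, transported across the equivalence $\ThJ \simeq \MndJ(\uV)$, exactly as you do. Your added care about the canonical isomorphisms and the invariance of commutation under isomorphism of cospans is consistent with the remark the paper makes just before Definition \bref{def:jary_and_abs_cmtnt_for_mnds} and the remark following Corollary \bref{thm:jary_mnd_comm_iff_jth_comm}.
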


\begin{RemSub}\label{rem:jary_vs_absolute}
The factorization of $\beta$ through the $\J$-ary commutant in \bref{thm:univ_prop_jary_cmmtnt} is unique if it exists, as $\TT^\perp_{\alpha,j} \rightarrow \UU$ is a monomorphism in $\MndJ(\uV) \simeq \ThJ$ since its corresponding morphism of $\J$-theories $\T^\perp \hookrightarrow \U$ is a subtheory inclusion.  But beware---we have no reason to expect in general that the $\J$-ary commutant $\TT^\perp_{\alpha,j}$ would be a \textit{submonad} of $\UU$, as the morphism $T^\perp_{\alpha,j} \rightarrow U$ is obtained from the inclusion $\T^\perp_I \hookrightarrow \U_I$ (in the notation of \bref{eqn_tsubi}) by applying the left Kan extension functor $\Lan_j:\VCAT(\J,\uV) \rightarrow \VCAT(\uV,\uV)$, which need not preserve monomorphisms in general.  Indeed, consider the case $\V = \Ab$, $j:\J = \{\ZZ\} \hookrightarrow \Ab$, where $\VCAT(\J,\uV) \cong \Ab$ and $\Lan_j$ sends an abelian group $M$ to the additive endofunctor $M \otimes (-)$ on $\Ab$.

Hence we have no reason to expect that the $\J$-ary commutant of a morphism of $\J$-ary monads would in general coincide with its absolute commutant, whose canonical morphism $\TT^\perp \hookrightarrow \UU$ \textit{is} always a submonad inclusion, its components being simply the components $T^\perp V = (\uV^\op_\TT)^\perp(V,I) \hookrightarrow \uV_\UU^\op(V,I) \cong UV$ of the corresponding inclusion of $\uV$-theories $(\uV^\op_\TT)^\perp \hookrightarrow \uV^\op_\UU$.

However, there is one important special case in which the $\J$-ary commutant coincides with the absolute commutant, as follows.  Take $\V = \Set$ and $j:\J = \FinCard \hookrightarrow \Set$, so that $\J$-theories are now the classical Lawvere theories and $\J$-ary monads are the familiar finitary monads on $\Set$.  Here the left Kan extension functor $\Lan_j:\CAT(\FinCard,\Set) \rightarrow \CAT(\Set,\Set)$ \textit{does} preserve monomorphisms, since the left Kan extension $\Lan_j P$ of a functor $P:\FinCard \rightarrow \Set$ is given pointwise as a filtered colimit, and pullbacks commute with filtered colimits in $\Set$.  Moreover, further special properties of $\Set$ allow us to prove the following result, wherein we call the $\J$-ary commutant for $\J = \FinCard$ the \textit{finitary commutant}:
\end{RemSub}

\begin{ThmSub}\label{thm:abs_cmtnt_is_finitary_cmtnt_over_set}
Let $\alpha:\TT \rightarrow \UU$ be a morphism of finitary monads on $\Set$.  Then the finitary commutant of $\alpha$ is the same as the absolute commutant $\TT^\perp$ of $\alpha$.  In particular, the absolute commutant of $\alpha$ is a finitary monad.
\end{ThmSub}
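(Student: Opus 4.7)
The plan is to show that $\TT^\perp_\alpha$ is itself a finitary monad, whereupon the two commutants will coincide via their universal properties as submonads of $\UU$. Both are pointwise monic into $\UU$: the absolute commutant by construction, and the finitary commutant because $\Lan_j\colon\CAT(\FinCard,\Set)\to\CAT(\Set,\Set)$ preserves monomorphisms as noted in \bref{rem:jary_vs_absolute}. The easy containment $\TT^\perp_{\alpha,j}\subseteq\TT^\perp_\alpha$ is immediate from the universal property \bref{thm:univ_prop_jary_cmmtnt} applied with $\J=\Set$: $\TT^\perp_{\alpha,j}$ is in particular a $\V$-monad whose canonical morphism to $\UU$ commutes with $\alpha$ as a cospan of monads in Kock's sense, by \bref{thm:cmmtn_jth_jmnd}. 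The reverse containment will then follow symmetrically, using \bref{thm:univ_prop_jary_cmmtnt} with $\J=\FinCard$, as soon as $\TT^\perp_\alpha$ is known to be finitary.

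The key step is the following lemma, stated in the $\V$-theory $\U^{\mathrm{abs}}:=\mathsf{t}(\UU)$ associated to $\UU$: if $\iota\colon W'\hookrightarrow V$ is a split monomorphism in $\Set$, $\mu'\in UW'$, $\mu:=U(\iota)\mu'\in UV$, and $\sigma\in UW''$ for any set $W''$, then $\mu$ commutes with $\sigma$ in $\U^{\mathrm{abs}}$ (equivalently, in Kock's sense for $\UU$, via \bref{thm:cmmtn_jth_jmnd}) if and only if $\mu'$ does. Granting the lemma, for $\mu\in(\TT^\perp_\alpha)(V)\subseteq UV$, the finitariness of $\UU$ yields a factorization $\mu=U(\iota)\mu'$ for some finite subset inclusion $\iota\colon W'\hookrightarrow V$, and I may arrange $W'$ to be nonempty whenever $V$ is (factoring $\emptyset\hookrightarrow V$ through a singleton if necessary; the case $V=\emptyset$ is vacuous), so that $\iota$ is split mono in $\Set$. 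Applying the lemma with $\sigma=\alpha_{W''}(\tau)$ for every set $W''$ and every $\tau\in TW''$ yields $\mu'\in(\TT^\perp_\alpha)(W')$, showing that every element of $(\TT^\perp_\alpha)(V)$ lies in the image of $(\TT^\perp_\alpha)(W')$ for some finite $W'\subseteq V$, i.e.\ that $\TT^\perp_\alpha$ is finitary.

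For the proof of the lemma I work directly in $\U^{\mathrm{abs}}$, where $\mu=\mu'\circ\iota^*$ with $\iota^*:=\tau(\iota)\colon V\to W'$ the image of $\iota$ under the canonical $\tau\colon\J^\op\to\U^{\mathrm{abs}}$; since $\tau$ is a functor and $\iota$ has a retraction $r$ in $\Set$, $\iota^*$ is a split epi with section $\tau(r)$. The $\V$-functoriality of the partial cotensor $[W'',-]_r$ immediately gives
\[\mu\ast\sigma=(\mu'\ast\sigma)\circ[W'',-]_r(\iota^*),\]
while the analogous identity
\[\mu\stt\sigma=(\mu'\stt\sigma)\circ[W'',-]_r(\iota^*)\]
requires additionally the centrality of $\iota^*$ in $\U^{\mathrm{abs}}$ furnished by \bref{thm:unique_morphism_from_jop_is_central}: centrality yields $\iota^*\ast\sigma=\iota^*\stt\sigma$, that is, $[W',-]_\ell(\sigma)\circ[W'',-]_r(\iota^*)=[I,-]_r(\iota^*)\circ[V,-]_\ell(\sigma)$, which permits the rearrangement $\mu\stt\sigma=[I,-]_r(\mu')\circ[I,-]_r(\iota^*)\circ[V,-]_\ell(\sigma)=(\mu'\stt\sigma)\circ[W'',-]_r(\iota^*)$. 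Since $[W'',-]_r(\iota^*)$ is still a split epi (functors preserve split epis), it may be cancelled from the right of these two identities, proving $\mu\perp\sigma\iff\mu'\perp\sigma$. The main potential obstacle is verifying the second identity, whose derivation genuinely uses the centrality of morphisms from $\J^\op$; everything else is bookkeeping with cotensor functoriality and the definitions of the two Kronecker products.
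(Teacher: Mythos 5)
Your proposal is correct, but it takes a genuinely different route from the paper's. The paper works directly with the comparison morphism $\varphi'\colon\TT^\perp_j\to\TT^\perp$: injectivity comes from $\Lan_j$ preserving monomorphisms, and surjectivity is proved by an explicit computation in the filtered colimit $UX=\varinjlim\,\U(S^n,S)$, normalizing representatives $[\mu,n,x]$ so that $x$ is injective, establishing a cancellation Claim ($[\mu,n,x]=[\nu,n,x]$ with $x$ injective forces $\mu=\nu$) \emph{semantically} by evaluating at normal $\U$-algebras (with the empty algebra treated separately), and then specializing the commutation of $[\mu,n,x]$ with $[\nu,m,1_m]$ to deduce $\mu*\nu=\mu\stt\nu$. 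You instead prove that the absolute commutant is itself a finitary monad, via a \emph{syntactic} descent lemma inside the $\uV$-theory: commutation with $\sigma$ is invariant under precomposition with the split epimorphism $\tau(\iota)$, by the centrality of $\tau$ \pbref{thm:unique_morphism_from_jop_is_central} together with cancellation of split epimorphisms; the theorem then follows from the two universal properties \pbref{thm:univ_prop_jary_cmmtnt}. The two arguments hinge on the same cancellation phenomenon---your split-epi cancellation is the syntactic counterpart of the paper's semantic Claim, and both must special-case emptiness ($W'=\emptyset$ for you, $\ca{C}=\emptyset$ for the paper)---but yours is more structural and isolates exactly which features of $\Set$ are used (finitarity of $U$ supplies factorizations through finite subsets, and nonempty subset inclusions split), whereas the paper's is more elementary and self-contained. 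Two small glosses in your write-up, neither fatal: the ``i.e.\ that $\TT^\perp_\alpha$ is finitary'' step also needs the injectivity half of the comparison $\varinjlim_{W'}T^\perp W'\to T^\perp V$, which is automatic because $T^\perp$ is a pointwise subfunctor of the finitary functor $U$; and your identification of $T^\perp V$ with the set of operations commuting elementwise with the single-output operations in the image of $\alpha$ tacitly uses the reductions \bref{thm:valued_in_thoms_quantifying_over_just_obj} and \bref{thm:comm_mor_th_via_single_outp_ops} together with the fact that over $\Set$ commutation of generalized elements is detected elementwise.
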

\begin{proof}
$\TT$ and $\UU$ are isomorphic to the finitary monads associated to Lawvere theories $\T$ and $\U$, so w.l.o.g. $\TT = \mathsf{m}(\T)$, $\UU = \mathsf{m}(\U)$, and $\alpha$ is induced by a morphism of Lawvere theories $A:\T \rightarrow \U$.  The finitary commutant $\TT^\perp_j$ of $\alpha$ is the finitary monad associated to the commutant $\T^\perp$ of $A$, and the associated morphism $\varphi:\TT^\perp_j \rightarrow \UU$ is induced by the inclusion of Lawvere theories $\T^\perp \hookrightarrow \U$.  $\varphi$ commutes with $\alpha$ and so factors through the absolute commutant $\TT^\perp \hookrightarrow \UU$ of $\alpha$ via a unique morphism $\varphi':\TT^\perp_j \rightarrow \TT^\perp$, and it suffices to show that the component $\varphi'_X:T^\perp_jX \rightarrow T^\perp X$ is bijective for each set $X$.  But by the preceding remarks $\varphi_X:T^\perp_jX \rightarrow UX$ is injective, so $\varphi'_X$ is injective and it suffices to show that $\varphi'_X$ is surjective.

For each finite cardinal $n$, we shall write $S^n$ to denote $n$ when considered as an object of the Lawvere theory $\U$, so that $S^n$ is an $n$-th power of $S = S^1$ in $\U$, and we shall use the same notation for the subtheory $\T^\perp \hookrightarrow \U$.  Thus we write $S^{(-)}:\FinCard^\op \rightarrow \U$ and $S^{(-)}:\FinCard^\op \rightarrow \T^\perp$ for the unique morphisms of Lawvere theories.  The endofunctors $U,T^\perp_j$ are then the left Kan extensions along $j:\FinCard \hookrightarrow \Set$ of $\U(S^{(-)},S),\T^\perp(S^{(-)},S):\FinCard \rightarrow \Set$, respectively.  Hence the sets $UX$ and $T^\perp_jX$ are the filtered colimits
$$UX \;\;= \varinjlim_{x\::\:n\: \rightarrow\: X}\U(S^n,S)\;\;\;\;\;\;\;\;\;\;\;\;\;\;T^\perp_jX\;\; = \varinjlim_{x\::\:n\: \rightarrow\: X}\T^\perp(S^n,S)\;,$$
taken over the comma category $\FinCard \slash X = (j \downarrow X)$.  The elements of $UX$ are therefore equivalence classes $[\mu,n,x]$ of triples consisting of a finite cardinal $n$, a function $x:n \rightarrow X$, and an abstract operation $\mu:S^n \rightarrow S$ in $\U$, where $[\mu,n,x] = [\nu,m,y]$ iff there exist a finite cardinal $k$ and maps $z:k \rightarrow X$, $f:n \rightarrow k$, $g:m \rightarrow k$ in $\Set$ such that $z \cdot f = x:n \rightarrow X$, $z \cdot g = y:m \rightarrow X$, and $\mu \cdot S^f = \nu \cdot  S^g:S^k \rightarrow S$.  Since the canonical map $T^\perp_jX \rightarrow UX$ is injective, we can identify $T^\perp_jX$ with the subset of $UX$ consisting of the elements that can be represented in the form $[\mu,n,x]$ with $\mu \in \T^\perp(S^n,S) \subseteq \U(S^n,S)$.

Every element of $UX$ can be represented as $[\mu,n,x]$ with $x:n \rightarrow X$ injective, since given arbitrary $x:n \rightarrow X$ and $\mu:S^n \rightarrow S$ in $\U$ we can factor $x$ as a surjection $f:n \rightarrow n'$ followed by an injection $x':n' \rightarrow X$, and then $[\mu,n,x] = [\mu \cdot S^f,n',x']$.

Related to this, we shall require the following:
\begin{description}
\item[\textnormal{\textit{Claim.}}] \textit{Suppose that $[\mu,n,x] = [\nu,n,x]$ in $UX$ with $x:n \rightarrow X$ injective.  Then $\mu = \nu$.}
\end{description}
To prove this, note that the hypothesis entails that there exist $f,g:n \rightarrow k$ in $\FinCard$ and $z:k \rightarrow X$ in $\Set$ with $z \cdot f = x = z \cdot g$ and $\mu \cdot S^f = \nu \cdot S^g$.  Forming the coequalizer $q:k \rightarrow \ell$ of $f,g$ in $\FinCard$, which is also a coequalizer in $\Set$, there is an induced $z':\ell \rightarrow X$ with $z' \cdot q = z$, and then letting $h = q \cdot f = q \cdot g:n \rightarrow \ell$ we have that $z' \cdot h = x:n \rightarrow X$ and $\mu \cdot S^h = \nu \cdot S^h:S^\ell \rightarrow S$.  In order to show that $\mu = \nu$ it suffices to show that $C(\mu) = C(\nu):\ca{C}^n \rightarrow \ca{C}$ for any normal $\U$-algebra $C:\U \rightarrow \Set$.  But we know that $C(\mu) \cdot \ca{C}^h = C(\nu) \cdot \ca{C}^h:\ca{C}^\ell \rightarrow \ca{C}$, where $\ca{C}^h:\ca{C}^\ell \rightarrow \ca{C}^n$ is the map induced by $h$, and $h$ is injective since $x = h \cdot z'$ is injective.  It follows that $\ca{C}^h$ is surjective if $\ca{C} \neq \emptyset$, so that then $C(\mu) = C(\nu)$ as needed, but on the other hand if $\ca{C} = \emptyset$ then $C(\mu),C(\nu):\emptyset^n \rightarrow \emptyset$ and so $\emptyset^n = \emptyset$ (equivalently $n \neq 0$) and again $C(\mu) = C(\nu)$.

Now let $\omega$ be an element of the subset $T^\perp X \hookrightarrow UX$.  Then $\omega$ is of the form $\omega = [\mu,n,x]$ with $x$ injective, and it suffices to show that the element $\mu \in \U(S^n,S)$ lies in $\T^\perp(S^n,S)$, for then $\omega$ lies in the subset $T^\perp_jX \hookrightarrow UX$.  Letting $\nu \in \U(S^m,S)$ lie in the image of $A:\T \rightarrow \U$, we must show that $\mu$ commutes with $\nu$.  But we know that $\omega = [\mu,n,x]$ commutes with every element $\sigma \in UY$ of the form $\sigma = [\nu,m,y]$ for any set $Y$ and any map $y:m \rightarrow Y$, i.e. the maps $\kappa^\UU_{XY},\widetilde{\kappa}^\UU_{XY}:UX \times UY \rightarrow U(X \times Y)$ yield the same value on the pair $(\omega,\sigma)$.  But $\kappa^\UU_{XY}$ sends $(\omega,\sigma)$ to the equivalence class $[\mu * \nu,n \times m,x \times y] \in U(X \times Y)$ of the first Kronecker product $\mu * \nu \in \U(S^{n \times m},S)$ for the map $x \times y:n \times m \rightarrow X \times Y$, and analogously for $\widetilde{\kappa}^\UU_{X \times Y}$ and the second Kronecker product $\mu \stt \nu$, so $[\mu * \nu,n \times m,x \times y] = [\mu \stt \nu,n \times m,x \times y]$.  In particular, we can take $Y = m$ and $y = 1:m \rightarrow m$, whence $[\mu * \nu,n \times m,x \times 1] = [\mu \stt \nu,n \times m,x \times 1]$ as elements of $U(X \times m)$.  But $x \times 1:n \times m \rightarrow X \times m$ is injective since $x$ is so, and therefore $\mu * \nu = \mu \stt \nu$ by the preceding Claim, so $\mu$ commutes with $\nu$.
\end{proof}

\begin{ParSub}
Let $\T$ be a $\J$-theory for the given eleutheric system of arities $\J \hookrightarrow \uV$, and assume that $\V$ has equalizers.  It is shown in \cite[11.14]{Lu:EnrAlgTh} that the $\V$-category $\Alg{\T}$ of $\T$-algebras in $\uV$ always exists and is equivalent to the $\V$-category $\uV^\TT$ of $\TT$-algebras for the associated $\J$-ary $\V$-monad $\TT = \mathsf{m}(\T)$.  Further, the full sub-$\V$-category $\Alg{\T}^! \hookrightarrow \Alg{\T}$ consisting of normal $\T$-algebras is \textit{isomorphic} to $\uV^\TT$ \cite[11.14]{Lu:EnrAlgTh}.
\end{ParSub}

\begin{ThmSub}\label{thm:cmtnt_alg_ele}
Let $A:\T \rightarrow \uV$ be a $\T$-algebra for a $\J$-theory $\T$.  Then the commutant $\T^\perp_A \hookrightarrow \uV_A$ of $A$ exists, recalling that $\uV_A$ is the full $\J$-theory of $A$ in $\uV$ \pbref{def:full_theory}.
\end{ThmSub}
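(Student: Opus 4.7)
The plan is to invoke the existence result for $\Alg{\T}_\uV$ cited in the paragraph immediately preceding the theorem and then pass to the equivalent characterization of the commutant of a $\T$-algebra given in \ref{def:cmtnt_talg}. Specifically, under the standing hypotheses---$\J \hookrightarrow \uV$ eleutheric and $\V$ possessing equalizers---the result \cite[11.14]{Lu:EnrAlgTh} recalled just above the theorem guarantees that $\Alg{\T} = \Alg{\T}_\uV$ already exists as a $\V$-category. This sidesteps the wide-intersection hypothesis that would otherwise be required by the general existence theorem \ref{thm:existence_of_vcat_talgs}.

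Once $\Alg{\T}_\uV$ is known to exist, the remainder is a direct unwinding of definitions. Since $\uV$ has standard designated $\J$-cotensors, \ref{par:cot_alg} supplies pointwise $\J$-cotensors in $\Alg{\T}_\uV$; in particular, the given $\T$-algebra $A$ acquires designated pointwise cotensors $[J,A] = [J,A-]$ for every $J \in \ob\J$. Consequently the hom-objects
$$\T^\perp_A(J,K) \;=\; \Alg{\T}_\uV([J,A],[K,A]) \;\;\;\;\;\; (J,K \in \ob\J)$$
exist in $\V$, and they assemble into the full $\J$-theory of $A$ in $\Alg{\T}_\uV$. By the equivalent characterization recorded in \ref{def:cmtnt_talg}, this full $\J$-theory is precisely the commutant $\T^\perp_A$, so $\T^\perp_A$ exists; the strong subtheory embedding $\T^\perp_A \hookrightarrow \uV_A$ is then supplied by \ref{thm:cmtnt_is_subth}, with the inclusion realized componentwise by the strongly faithful carrier functor $|\text{--}|:\Alg{\T}_\uV \to \uV$ of \ref{rem:carrier_functor_str_ff}.

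There is no substantial obstacle in this argument: the entire content lies in the cited result \cite[11.14]{Lu:EnrAlgTh}, which uses the eleutheric hypothesis to express $\Alg{\T}_\uV$ as $\uV^\TT$ for the associated $\J$-ary monad $\TT = \mathsf{m}(\T)$---and this equivalence requires of $\V$ only the existence of equalizers, not the stronger completeness assumptions needed to handle arbitrary $\C$ via \ref{thm:existence_of_commutant_via_intersections}.
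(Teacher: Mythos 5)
Your proof is correct and follows exactly the paper's own argument: both rest on the cited result \cite[11.14]{Lu:EnrAlgTh} (recalled in the paragraph preceding the theorem) to obtain the existence of $\Alg{\T}$ under the standing eleutheric-plus-equalizers hypotheses, and then identify $\T^\perp_A$ with the full $\J$-theory of $A$ in $\Alg{\T}$ via \bref{def:cmtnt_talg}. The additional remarks you make about pointwise cotensors and the strong subtheory embedding are accurate elaborations of what the paper leaves implicit.
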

\begin{proof}
By the preceding remark, $\Alg{\T}$ exists, and $\T^\perp_A$ is equivalently defined as the full $\J$-theory of $A$ in $\Alg{\T}$ \pbref{def:cmtnt_talg}.
\end{proof}

\begin{DefSub}\label{def:abs_cmtnt_talg}
Let $\V$ be a symmetric monoidal closed category with equalizers, let $\TT$ be a $\V$-monad on $\uV$, and let $A$ be a $\TT$-algebra.  Write $\T$ for the $\uV$-theory corresponding to $\TT$.  The \textbf{(absolute) commutant} of $A$ (or of $\TT$ with respect to $A$) is defined as the $\V$-monad $\TT^\perp_A$ corresponding to the commutant $\T^\perp_A$ of the (normal) $\T$-algebra $\T \rightarrow \uV$ corresponding to $A$.  Note that this commutant necessarily exists, by \bref{thm:cmtnt_alg_ele}.
\end{DefSub}

Here the notion of commutant intersects with the notion of \textit{codensity monad} \cite{Kock:CodMnd}:

\begin{PropSub}\label{thm:abs_cmtnt_of_talg_vs_codensity_mnd}
The absolute commutant $\TT^\perp_A$ of a $\V$-monad $\TT$ with respect to a $\TT$-algebra $A$ is the codensity $\V$-monad (see \textnormal{\cite[II]{Dub}}) of the $\T$-algebra $\widetilde{A}:\T \rightarrow \uV$ corresponding to $A$, where we denote by $\T$ the $\uV$-theory corresponding to $\TT$.
\end{PropSub}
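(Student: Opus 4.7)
The plan is to identify the underlying $\V$-endofunctors of $\TT^\perp_A$ and the codensity $\V$-monad $R_{\widetilde{A}} = \Ran_{\widetilde{A}} \widetilde{A}$ via a direct computation with ends, and then to match the two monad structures using the universal property of the codensity monad.

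First I would apply \bref{par:passage_betw_th_mnds} with $\J = \uV$ and $j = 1_{\uV}$, so the left Kan extension used to construct the monad from the theory becomes trivial and the underlying $\V$-endofunctor of $\TT^\perp_A$ is given on objects by $V \mapsto \T^\perp_A(V,I)$. Unpacking the definition of the commutant of a $\T$-algebra (\bref{def:cmtnt_talg} together with \bref{def:cmtnt}) and using $[I,\widetilde{A}] = \widetilde{A}$ (standardness of designated $\uV$-cotensors, \bref{par:des_jcot}, \bref{par:cot_alg}), I obtain
\[
\T^\perp_A(V,I) \;=\; (\Alg{\T}_{\uV})_{\widetilde{A}}(V,I) \;=\; \Alg{\T}_{\uV}([V,\widetilde{A}],\widetilde{A}).
\]
By the end description of the object of $\T$-homomorphisms in \bref{par:vcat_talgs}, and the fact that the pointwise cotensor is $[V,\widetilde{A}]J = [V,\widetilde{A}J] = \uV(V,\widetilde{A}J)$, this becomes
\[
\T^\perp_A(V,I) \;\cong\; \int_{J \in \T} \uV\bigl(\uV(V,\widetilde{A}J),\,\widetilde{A}J\bigr),
\]
which is exactly the pointwise formula for $(\Ran_{\widetilde{A}} \widetilde{A})(V)$. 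So the underlying $\V$-endofunctors of $\TT^\perp_A$ and $R_{\widetilde{A}}$ agree, naturally in $V$.

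For the monad structures, I would use the defining universal property of the codensity $\V$-monad recorded in \cite[Ch. II]{Dub}: $R_{\widetilde{A}}$ is the terminal $\V$-monad $\mathbb{M}$ on $\uV$ equipped with a lift of $\widetilde{A}$ through the forgetful $\V$-functor $\uV^{\mathbb{M}} \to \uV$. Now the subtheory inclusion $\T^\perp_A \hookrightarrow \uV_A$ composed with the canonical $\uV_A$-algebra $\uV_A \to \uV$ of \bref{def:full_theory} is a $\T^\perp_A$-algebra in $\uV$ with carrier $\ca{A}$; pointwise designated cotensors then exhibit $\widetilde{A}J = [J,\ca{A}]$ as a $\T^\perp_A$-algebra $\V$-naturally in $J \in \T$. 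Transporting along \bref{thm:equiv_jth_jmnd} produces the required lift of $\widetilde{A}$ through $\uV^{\TT^\perp_A} \to \uV$, and hence a canonical morphism of $\V$-monads $\TT^\perp_A \to R_{\widetilde{A}}$. Since we have already identified the underlying endofunctors, it remains only to check that this morphism is the identity on the shared endofunctor, which amounts to tracing the unit $\theta^{\T^\perp_A}$ of \bref{par:passage_betw_th_mnds} through the end description above.

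The endofunctor identification is a transparent unfolding of definitions and ends. The main obstacle is the final compatibility check: one must verify that composition in $\T^\perp_A$ (which, via \bref{thm:equiv_jth_jmnd}, supplies the multiplication of $\TT^\perp_A$) corresponds, under the end description, to the multiplication on $\Ran_{\widetilde{A}} \widetilde{A}$ coming from the right-Kan-extension universal property. This is a standard but slightly delicate comparison between composition of $\T$-homomorphisms and composition in the codensity construction, and it is the only non-formal step in the argument.
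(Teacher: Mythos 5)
Your argument is correct in outline but takes a genuinely different, more computational route than the paper. The paper's proof is essentially two lines: it recalls (from \bref{rem:jalg_str}) that for a $\uV$-valued algebra the commutant $\T^\perp_A$ \emph{is} the $\uV$-algebraic structure $\textnormal{Str}(\widetilde{A})$, invokes Dubuc's theorem \cite[II.3]{Dub} that $\textnormal{Str}(\widetilde{A})$ is the Kleisli $\uV$-theory $\uV_\SSS^\op$ of the codensity $\V$-monad $\SSS$ of $\widetilde{A}$, and concludes $\TT^\perp_A = \mathsf{m}(\T^\perp_A) = \mathsf{m}(\mathsf{t}(\SSS)) \cong \SSS$ via the equivalence \bref{thm:equiv_jth_jmnd}. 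You instead reprove the relevant instance of Dubuc's theorem by hand. Your endofunctor identification is correct: with $\J = \uV$ the left Kan extension in \bref{par:passage_betw_th_mnds} is trivial, and $\T^\perp_A(V,I) = \Alg{\T}_{\uV}([V,\widetilde{A}],\widetilde{A}) \cong \int_{J\in\T}\uV(\uV(V,\widetilde{A}J),\widetilde{A}J) = (\Ran_{\widetilde{A}}\widetilde{A})(V)$. What your route buys is an explicit, self-contained isomorphism; what it costs is the two steps you leave as sketches. First, the claim that the $\T^\perp_A$-algebra structures on the $\widetilde{A}J = [J,\ca{A}]$ are $\V$-natural in $J\in\T$ (not merely in $J\in\uV^{\op}$) is exactly the content of a lift $\T\to\Alg{\T^\perp_A}_{\uV}$ and does not follow from pointwise cotensoring alone; it holds because $\T$ and $\T^\perp_A$ commute over $\uV_A$, so that $A$ factors through $(\T^\perp_A)^\perp$ by \bref{thm:cmtn_symm} and \bref{thm:commutants_via_commutativity}, and \bref{thm:cmtn_via_thoms} then says precisely that the components $A_{JK}$ are valued in $\T^\perp_A$-homomorphisms. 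Second, the final compatibility of the two monad structures is the crux and is only described, not carried out; note also that you do not need both the terminality argument and the direct check that composition in $\T^\perp_A$ induces the codensity multiplication---either one, completed, suffices---and keeping both as partial sketches leaves the proof unfinished at exactly the point where the paper's citation to \cite[II.3]{Dub} does the work.
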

\begin{proof}
By \cite[II.3]{Dub}, the $\uV$-algebraic structure $\textnormal{Str}(\widetilde{A})$ of $\widetilde{A}$ \pbref{rem:jalg_str} is the Kleisli $\uV$-theory $\uV_\SSS^\op$ of the codensity $\V$-monad $\SSS$ for $\widetilde{A}$, and in particular, $\SSS$ exists since $\textnormal{Str}(\widetilde{A})$ does.  In other words, $\textnormal{Str}(\widetilde{A})$ is the $\uV$-theory $\mathsf{t}(\SSS) = \uV_\SSS^\op$ corresponding to $\SSS$.  But as we noted in \bref{rem:jalg_str}, $\textnormal{Str}(\widetilde{A}) = \T^\perp_A$ in this case, and the $\V$-monad associated to this $\uV$-theory is therefore $\TT^\perp_A = \mathsf{m}(\T^\perp_A) = \mathsf{m}(\mathsf{t}(\SSS)) \cong \SSS$.
\end{proof}

%---Begin inlined bibtex output----------------
\newcommand{\noopsort}[1]{}
\providecommand{\bysame}{\leavevmode\hbox to3em{\hrulefill}\thinspace}
\providecommand{\MR}{\relax\ifhmode\unskip\space\fi MR }
% \MRhref is called by the amsart/book/proc definition of \MR.
\providecommand{\MRhref}[2]{%
  \href{http://www.ams.org/mathscinet-getitem?mr=#1}{#2}
}
\providecommand{\href}[2]{#2}

%---End inlined bibtex output------------------

\end{document}